\newtheorem{theorem}{Theorem}[section]
\newtheorem{lemma}[theorem]{Lemma}
\newtheorem{remark}[theorem]{Remark}
\newtheorem{prop}[theorem]{Proposition}
\newtheorem{example}[theorem]{Example}
\newtheorem{corollary}[theorem]{Corollary}
\newtheorem{hypothesis}[theorem]{Hypothesis}
\newtheorem{hypotheses}[theorem]{Hypotheses}
\numberwithin{equation}{section}
\newcommand{\R}{{\mathbb R}}
\newcommand{\C}{{\mathbb C}}
\newcommand{\N}{{\mathbb N}}
\newcommand{\cL}{{\mathcal L}}
\newcommand{\cB}{{\mathcal B}}
\newcommand{\cU}{{\mathcal U}}
\newcommand{\ve}{\varepsilon}
\newcommand{\su}{\subseteq}
\newcommand{\ov}{\overline}
\newcommand{\ar}{\mbox{\rm arg}\,}
\newcommand{\Lb}{L^{\gamma, b}}
\begin{document}

\title{Analyticity for some degenerate evolution equations defined on domains with corners}

\author{Angela A. Albanese, Elisabetta M. Mangino}%A.\,A.

\thanks{\textit{Mathematics Subject Classification 2010:}
Primary 35K65, 35B65, 47D07; Secondary 60J35.}

\keywords{Degenerate elliptic second order operator, domain with corners, analytic $C_0$--semigroup, space of continuous functions. }

\address{ Angela A. Albanese\\
Dipartimento di Matematica e Fisica ``E. De Giorgi''\\
Universit\`a del Salento\\
I-73100 Lecce, Italy}
\email{angela.albanese@unisalento.it}

\address{ Elisabetta M. Mangino\\
Dipartimento di Matematica e Fisica ``E. De Giorgi''\\
Universit\`a del Salento\\
I-73100 Lecce, Italy} \email{elisabetta.mangino@unisalento.it}

\begin{abstract}
We study the analyticity of the semigroups generated by some classes of
degenerate second order differential operators in the space of continuous function on a domain with corners. These semigroups arise from the theory of dynamics of
populations.
\end{abstract}

\maketitle

\section{Introduction}
In this paper we  deal with the class of degenerate second order elliptic differential operators 
\begin{equation}\label{e.operator}
L=\Gamma(x)\sum_{i=1}^d[\gamma_i(x_i)x_i\partial^2_{x_i}+b_i(x)\partial_{x_i}],\quad x\in Q^d=[0,M]^d,
\end{equation}
where $M>0$,   $\Gamma$, $b_i$ and $\gamma_i$, for $i=1,\ldots,d$, are continuous functions on $Q^d$ and on $[0,M]$ respectively and $b=(b_1, \dots, b_d)$ is an inward pointing drift.
The operator \eqref{e.operator} arises in the theory of Fleming--Viot processes, namely  measure--valued processes that can be viewed as diffusion approximations of empirical processes associated with some classes of discrete time Markov chains in population genetics. We refer to \cite{EK,EK1,FV} for more details on the  topic.
Recent applications of Fleming-Viot processes to the study of the volatility-stabilized markets can be found in \cite{So}.
From the analytic point of view, the interest in the operator (\ref{e.operator}) relies on the fact  that it is of degenerate type and its domain presents edges and corners,  hence, the classical techniques for the study of (parabolic) elliptic operators on smooth domains cannot be applied. 
 
 In the one-dimensional case, the study of such type of degenerate (parabolic) elliptic problems on $C([0,1])$ started in the fifties with the papers by Feller  \cite{F1,F2}, where it is pointed out that the behaviour on the boundary of the diffusion process associated with the degenerate operator  constitutes one of its main characteristics.  
 The subsequent work of Cl\'ement and Timmermans \cite{CT} clarified which conditions on the coefficients of the operator  \eqref{e.operator}  guarantee the generation of a $C_0$--semigroup in $C([0,1])$.  The problem of the regularity of the generated semigroup in $C([0,1])$ has been  considered by several authors, \cite{An,CM,BRS,Met, ACM-1}. In particular,  Metafune \cite{Met} established the analyticity of the semigroup under suitable conditions on the coefficients of the operator, obtaining, among other results,   the analyticity of the semigroup generated by $x(1-x)D^2$ on $C([0,1])$, which was a problem left open for a long time. We refer to \cite{CMPR} for a survey on this topic.
 
The latter result was extended to the multidimensional case in \cite{AM-2}, where the authors proved the analyticity of the semigroup generated by   the 
operator
\[ Au(x)=\frac{1}{2}\sum_{i,j=1}^dx_i(\delta_{ij}-x_j)\partial_{x_ix_j}^2u(x)\]
on $C(S^d)$, where $S^d$ is the $d$-dimensional canonical simplex. On this topic we refer  to the papers \cite{ACM-2,ACM-1,AM,AM-2, CR,CC,E,S1,S2,S3,Stannat1} and the references quoted therein (in particular, see the introduction of \cite{AM-2} for a brief survey of the main results on this operator).

  In \cite{CC1} Cerrai and Cl\'ement established Schauder estimates for \eqref{e.operator}  under suitable H\"older continuity hypothesis on its coefficients.
Analogous estimates, but with different tecniques, where established in \cite{BP} (see also \cite{ABP}) for the same operator defined on the orthant $\mathbb{R}^d_+$ and in \cite{EM, EM-1}  for similar operators defined on domains with corners.

The aim of this paper is to  present some results about generation, sectoriality and gradient estimates for the resolvent of a suitable realization of  
(\ref{e.operator}) in $C(Q^d)$. 
To this end, we start with the analysis in the particular case that the functions $b_i$ are costant and $\Gamma=1$, first in the  one-dimensional case and then, via a tensor product argument, in the multi-dimensional setting. Much attention is paid to the costants appearing in the analyticity and gradient estimates, showing their uniformity
for $b_i$ belonging to an interval $[0,B]$. These results strongly rely on estimates proved in \cite{AM-3}.  
As a consequence, we can treat the case of non-costant drift  with a perturbation argument under the assumption that 
there exists $\delta>0$ and $C>0$ such that, for every $i=1,\ldots,d$ and $x,\ x'\in Q^d$ with $x_i<\delta$ and $x_i'=0$, we have
\[
|b_i(x)-b_i(x')|\leq C\sqrt{x_i},
\]

Finally we treat the case that $\Gamma$ is not a costant function, by applying a "freezing coefficients" proof. An important role in this argument will be played by the uniformity of the costants in the resolvent estimates.

As a by-product of the previous results we obtain analogous results for the operator
\[ \Gamma(x)\sum_{i=1}^d[\gamma_i(x_i)x_i(1-x_i)\partial^2_{x_i}+b_i(x)\partial_{x_i}],\quad x\in [0,1]^d.\]
This will be the starting point for a forthcoming paper on the analyticity of Fleming-Viot type operators defined on the canonical simplex.

\subsection{Notation} The function spaces considered in this paper consist of complex--valued functions.

Let $K\su\R^d$ be a compact set. For $n\in\N$ we denote by  $C^n(K)$ the space of all $n$--times continuously differentiable functions $u$ on $K$ such that $\lim_{x\to x_0}D^{\alpha}u(x)$ exists and is finite for all $|\alpha|\leq n$ and  $x_0\in \partial K$. In particular, $C(K)$ denotes the space of all continuous functions $u$ on $K$. The norm on $C(K)$ is the supremum norm and is  denoted by $\|\ \|_\infty$. The norm $\|\ \|_{n,\infty}$ on  $C^n(K)$  is defined by $\|u\|_{n,\infty}=\sum_{|\alpha|\leq n}\|D^\alpha u\|_\infty$.

Moreover, we denote by $C([0,\infty])$ the Banach space of continuous functions on $[0,\infty[$ converging  at infinity,  endowed with the supremum norm $||\cdot||_\infty$.
Analogously, for every $n\in\N$, $C^n([0,\infty])$ stands for the space of functions $u\in C([0,\infty])$ with derivatives up to order $n$ that have finite limits at $\infty$. Finally $C_c^n([0,\infty[)$ denotes the subspace of $C^n([0,\infty[)$ of functions with compact support and $C_0([0,\infty[)$ denotes the space of continuous functions on $[0,\infty[$ vanishing at $\infty$.

For easy reading, in some  cases we will adopt the notation $\|\varphi(x)u\|_\infty$ to still denote $\sup_{x\in K}|\varphi(x)u(x)|$.

% A \textit{bounded analytic semigroup of angle $\theta$} with $0<\theta\leq \pi/2$ is an analytic semigroup defined in the sector $\Sigma_\theta=\{z\in \C:\ |\mbox{arg}z|<\theta\}$.

For other undefined notation and results on the theory of semigroups we refer to \cite{EN,L,P}. 

In the present paper we will use some results about injective tensor products of Banach spaces. We refer to  \cite{J,K,T,N} for definitions and  basic results  in this topic and for related applications.

\section{Auxiliary Results}

\markboth{A.\,A. Albanese, E.\, M. Mangino}%
{\MakeUppercase{Analyticity for some degenerate evolution equations }}

%%%%%%%%%%%%%%%%%%%%%%%%%%%%%%%%%%%%%%%%%%%%%%%%%%%%%%%%%%%%%%%%%%%%%%%%%%%%%%%%%%%%%%%%%%%%%%%%%%%%%%%%%%%%%%%
\subsection{The one--dimensional case}

Let $M, B\in \R$ with $M, B>0$ and let $\gamma\in C([0,M])$ be  a strictly positive function. Set $\gamma_0:=\min_{x\in [0,M]}\gamma(x)>0$. Let  $b\in [0,B]$ and consider 
   the one--dimensional second order differential operator 
\begin{equation}
L^{\gamma, b}u(x)=\gamma(x)x u''(x)+bu'(x), \quad x\in [0,M].
\end{equation}
  According to \cite[Proposition 3.1]{Met} (see also \cite{CMPR}),  we define the domain of $\Lb$ in the following way:  $u\in D(\Lb)$ if, and only if, $u\in C([0,M])\cap C^2(]0,M])$, $u'(M)=0$ and
\begin{eqnarray}\label{dom:1} 
& &\lim_{x\to 0^+}\Lb u(x)=0 \ \mbox{ if } b= 0,\\
& & \label{dom:2} u\in C^1([0,\delta])\ \mbox{and} \  \lim_{x\to 0^+} xu''(x)=0 \ \mbox{if}\ b>0.
\end{eqnarray}
It is known  from \cite{Met,CM,CMPR,CT} that
the operator $\Lb$ with domain $D(\Lb)$ generates a \textit{bounded   analytic $C_0$--semigroup $(T(t))_{t\geq 0}$ of positive contractions and angle $\pi/2$ on $C([0,M])$}.

We   are here interested in proving 
  estimates  for the norm of the resolvent  operators of $\Lb$ and of their gradient with   constants which  depend only on $B$. In order to do this we need the following fact.

\begin{remark}\label{r1-laplace} \rm
 Let $B,\ \gamma_0>0$. For every $b\in [0,B]$ and $\gamma\in\R$, $\gamma\geq \gamma_0$  
consider the one--dimensional second order differential operator 
\begin{equation}\label{e.1-operatorl}
G^{\gamma,b}u(x)=\gamma u''(x)+bu'(x), \quad x\in [0,\infty),
\end{equation}
with  domain  $D:=\{u\in C^2([0,\infty]):\ u'(0)=0\}$ and $\gamma\geq \gamma_0$, $b\in [0,B]$. 
It is well known that the operator $(G^{\gamma,b},D)$ generates a bounded analytic semigroup of angle $\pi/2$ in $C([0,\infty])$, see, e.g., \cite[Theorem VI.4.3]{EN}. In particular, $(G^{\gamma,b},D)$ satisfies the following properties:

\textit{There exists $c_1,\, c_2, R>0$ depending only on $B$ and on $\gamma_0$ such that, for every $\lambda\in\C$ with ${\rm Re}\lambda>R$ and $u\in C([0,\infty])$, 
\begin{eqnarray}\label{ea-1}
& &\|R(\lambda, G^{\gamma,b})\|\leq \frac{c_1}{|\lambda|}\\
& & \|(R(\lambda, G^{\gamma,b})u)'\|_\infty\leq \frac{c_2}{\sqrt{|\lambda|}}\|u\|_\infty.
\end{eqnarray}}

The proof is as follows.

Set $G:=G^{1,0}$. Then, for every $\lambda=|\lambda|e^{i\theta}\not \in (-\infty,0]$ with $|\theta|<\pi$, we have
\begin{equation}\label{ea-0}
R(\lambda, G)u=\frac{1}{2\mu}\int_{0}^\infty e^{-\mu|x-s|}u(s)\,ds+c e^{-\mu x}, \quad u\in  C([0,\infty]),
\end{equation}
where $\mu^2=\lambda$ with ${\rm Re}\mu>0$ and $c=\frac{1}{2\mu}\int_{0}^\infty e^{-\mu s}u(s)\, ds$, see, e.g., \cite[Theorem VI.4.3, Theorem 4.2]{EN}. So, from \eqref{ea-0} it follows that 
\begin{eqnarray}
\label{ea-2} & & \|R(\lambda, G)\|\leq \frac{3}{2|\lambda|\cos(\theta/2)}\\
\label{ea-3} & & \|(R(\lambda, G)u)'\|_\infty\leq \frac{1}{\sqrt{|\lambda|}\cos(\theta/2)}\|u\|_\infty,
\end{eqnarray}
for every $\lambda=|\lambda|e^{i\theta}\not \in (-\infty,0]$ with $|\theta|<\pi$ and $u\in C([0,\infty])$.

We now consider the operator $G^{\gamma,0}$ with $\gamma\geq \gamma_0$ and observe that
\[
R(\lambda, G^{\gamma,0})=\gamma^{-1}R(\lambda/\gamma,G),\quad \lambda\in\C\setminus (-\infty,0].
\]
This equality implies via \eqref{ea-2} and \eqref{ea-3} that
\begin{eqnarray}
\label{ea-4} & & \|R(\lambda, G^{\gamma,0})\|\leq \gamma^{-1}\frac{3}{2|\lambda/\gamma|\cos(\theta/2)}=\frac{3}{2|\lambda|\cos(\theta/2)}\\
\label{ea-5} & & \|(R(\lambda, G^{\gamma,0})u)'\|_\infty\leq \gamma^{-1}\frac{1}{\sqrt{|\lambda|/\gamma}\cos(\theta/2)}\|u\|_\infty \\
& & \quad \leq \frac{1}{\sqrt{\gamma_0|\lambda|}\cos(\theta/2)}\|u\|_\infty,\nonumber
\end{eqnarray}
for every $\lambda=|\lambda|e^{i\theta}\not \in (-\infty,0]$ with $|\theta|<\pi$ and $u\in C([0,\infty])$.

If we set $H^bu=bu'$ for $b\in [0,B]$  and $u\in C^1([0,\infty])$, then by \eqref{ea-5} we obtain,  for every $\lambda=|\lambda|e^{i\theta}\not \in (-\infty,0]$ with $|\theta|<\pi$ and $u\in C([0,\infty])$, that
\begin{equation}\label{ea-6}
\|H^bR(\lambda, G^{\gamma,0})u\|_\infty\leq \frac{B}{\sqrt{\gamma_0|\lambda|}\cos(\theta/2)}\|u\|_\infty. 
\end{equation}
By \eqref{ea-6}, for every $\lambda=|\lambda|e^{i\theta}$ with $|\theta|<\pi/2$ and $|\lambda|>R=8B^2/\gamma_0$ and $b\in [0,B]$, the operator $H^bR(\lambda, G^{\gamma,0})$ has norm $<1/2$ and so the operator $S_b(\lambda):=I-H^bR(\lambda, G^{\gamma,0})$ is invertible with inverse 
\begin{equation}\label{ea-7}
(S_b(\lambda))^{-1}=\sum_{n=1}^\infty[H^bR(\lambda,G^{\gamma,0})]^n
\end{equation}
 so that $\|(S_b(\lambda))^{-1}\|\leq 2$. This estimate, combined  with the identity $\lambda-G^{\gamma, b}=\lambda-G^{\gamma,0}-H^b=[I-H^bR(\lambda,G^{\gamma,0})](\lambda-G^{\gamma,0})$ implies, for every $\lambda=|\lambda|e^{i\theta}$ with $|\theta|<\pi/2$ and $|\lambda|>R=8B^2/\gamma_0$ and $b\in [0,B]$, that
\begin{equation}\label{ea-8}
R(\lambda,G^{\gamma,b})=R(\lambda,G^{\gamma,0})(S_b(\lambda))^{-1}.
\end{equation}
So, by \eqref{ea-4}, \eqref{ea-5} and \eqref{ea-8} we obtain,  for every $\lambda=|\lambda|e^{i\theta}$ with $|\theta|<\pi/2$,  $|\lambda|>R=8B^2/\gamma_0$, $u\in C([0,\infty])$ and $b\in [0,B]$, that
\begin{eqnarray}
\label{ea-9} & & \|R(\lambda, G^{\gamma,b})\|\leq \frac{3}{|\lambda|\cos(\theta/2)}\\
\label{ea-10} & & \|(R(\lambda, G^{\gamma,b})u)'\|_\infty\leq  \frac{2}{\sqrt{\gamma_0|\lambda|}\cos(\theta/2)}\|u\|_\infty.
\end{eqnarray}\qed
\end{remark}

\begin{prop}\label{p.1-gradiente} Let $B, M>0$ and  let $\gamma\in C([0,M])$ be a strictly positive function with $\gamma_0:=\min_{x\in [0,M]}\gamma(x)$. Then, for every $b\in [0,B]$, the following properties hold.
\begin{enumerate} 
\item There exist  $d_0=d_0(B, \gamma), R=R(B, \gamma)>0$ such that, for every $u\in C([0,M])$ and for every $\lambda\in\C$ with ${\rm Re}\lambda >R$, we have 
\begin{eqnarray}
 ||R(\lambda,\Lb) u||_\infty & \leq & d_0 \frac{||u||_\infty}{|\lambda|},\\
 ||\sqrt{x}(R(\lambda,\Lb) u)'||_\infty &\leq & d_0 \frac{||u||_\infty}{\sqrt{|\lambda|}}.
\end{eqnarray}
Moreover, $\lim_{x\to 0^+}\sqrt{x}(R(\lambda,\Lb) u)'(x)=0$. In particular, $\sqrt{x}(R(\lambda,\Lb) u)'$ extends continuously to $[0,M]$.

\item If $(T(t))_{t\geq 0}$ is the semigroup generated by $(\Lb, D(\Lb))$, then there exist $K=K(B,\gamma), \alpha= \alpha(B, \gamma)>0$ such that, for every $u\in C([0,M])$, we have 
\begin{eqnarray}
& & ||t\Lb T(t)|| \leq Ke^{\alpha t},\qquad t\geq 0\\
& &||\sqrt{x}(T(t)u)'||_\infty\leq \frac{K e^{\alpha t}}{\sqrt t}||u||_\infty,        \qquad  0<t<R^{-1},\\
& &||\sqrt{x}(T(t)u)'||_\infty\leq K e^{\alpha t}||u||_\infty, \qquad  t\geq R^{-1},
\end{eqnarray}
where $R$ is the same constant which appears in  part {\rm (1)}.

Moreover, $\lim_{x\to 0^+} \sqrt{x}(T(t)u)'(x)=0$ if $t>0$. In particular, $\sqrt{x}(T(t)u)'$ extends continuously to $[0,M]$ if $t>0$.
\end{enumerate}
\end{prop}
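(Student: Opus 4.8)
The strategy is to transfer the estimates of Remark~\ref{r1-laplace} from the half-line model operator $G^{\gamma(0),b}$ to $L^{\gamma,b}$ via a change of variables that straightens the degeneracy at $x=0$, then handle the resulting lower-order and variable-coefficient terms by a perturbation argument exploiting the $\sqrt{x}$-gradient bound; the passage to the semigroup in part~(2) is then a standard consequence of the sectoriality and gradient estimates of part~(1).

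For part~(1), first I would introduce the substitution $y=\int_0^x \mathrm{d}t/\sqrt{\gamma(t)t}$ (comparable to $\sqrt{x}$ near $0$), which maps a neighbourhood of $0$ in $[0,M]$ onto a neighbourhood of $0$ in $[0,\infty)$ and transforms $\gamma(x)x\,u''+b\,u'$ into an operator of the form $u_{yy}+ (\text{bounded})u_y$ modulo terms that vanish appropriately; matching the Neumann-type behaviour encoded in \eqref{dom:1}--\eqref{dom:2} with the condition $u'(0)=0$ in the domain $D$ of $G^{\gamma,b}$. Alternatively — and this is probably cleaner given the emphasis on uniformity in $B$ — I would write $L^{\gamma,b}=G^{\gamma(0),b}+(L^{\gamma,b}-G^{\gamma(0),b})$ after localizing, observe that $L^{\gamma,b}u-G^{\gamma(0),b}u=(\gamma(x)x-\gamma(0))u''+0\cdot u'$ is controlled in terms of $\gamma(x)x\,u''=L^{\gamma,b}u-bu'$ with a small factor near $0$ (since $\gamma(x)x\to 0$) plus a bounded remainder away from $0$, and then run a Neumann series as in \eqref{ea-7}--\eqref{ea-8}. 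The bound $\|R(\lambda,G^{\gamma(0),b})\|\le c_1/|\lambda|$ absorbs the bounded remainder for $\mathrm{Re}\,\lambda$ large, while the factor $\gamma(x)x$ being small near $0$ makes the genuinely second-order part of the perturbation have small norm there; the constants $c_1,c_2,R$ from Remark~\ref{r1-laplace} depend only on $B$ and $\gamma_0$, and the extra dependence on $\gamma$ (beyond $\gamma_0$) enters only through the modulus of continuity of $\gamma$ and the cutoff scale, giving $d_0=d_0(B,\gamma)$ and $R=R(B,\gamma)$. The gradient estimate $\|\sqrt{x}(R(\lambda,L^{\gamma,b})u)'\|_\infty\le d_0\|u\|_\infty/\sqrt{|\lambda|}$ follows from the corresponding half-line estimate \eqref{ea-10} (note $\sqrt{x}$ corresponds to the new variable $y$, so $\sqrt{x}\,u'$ becomes essentially $u_y$ up to a bounded factor) together with the Neumann series. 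The limit relation $\lim_{x\to 0^+}\sqrt{x}(R(\lambda,L^{\gamma,b})u)'(x)=0$ comes from the domain condition \eqref{dom:1}--\eqref{dom:2}: when $b>0$, $u:=R(\lambda,L^{\gamma,b})f\in C^1([0,\delta])$ so $u'$ is bounded near $0$ and $\sqrt{x}u'\to 0$; when $b=0$, one uses $\lim_{x\to 0^+}\gamma(x)xu''(x)=0$ together with the identity $(\sqrt{x}u')'=u'/(2\sqrt{x})+\sqrt{x}u''$ and an integration argument to conclude.

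For part~(2), once part~(1) provides a sectorial estimate in a sector $\{\mathrm{Re}\,\lambda>R\}$ (indeed the semigroup is already known to be bounded analytic of angle $\pi/2$), the bound $\|t\,L^{\gamma,b}T(t)\|\le Ke^{\alpha t}$ is the standard characterization of analyticity via $\|L^{\gamma,b}R(\lambda,L^{\gamma,b})\|=\|\lambda R(\lambda,L^{\gamma,b})-I\|\le d_0+1$ and the Cauchy integral representation $T(t)=\frac{1}{2\pi i}\int_\Gamma e^{\lambda t}R(\lambda,L^{\gamma,b})\,\mathrm{d}\lambda$ over a suitable contour; shifting the contour by $R$ produces the factor $e^{\alpha t}$. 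For the gradient bound, I would apply the operator $u\mapsto\sqrt{x}(\,\cdot\,)'$ under the same Cauchy integral: $\sqrt{x}(T(t)u)'=\frac{1}{2\pi i}\int_\Gamma e^{\lambda t}\sqrt{x}(R(\lambda,L^{\gamma,b})u)'\,\mathrm{d}\lambda$, and estimate using part~(1) that the integrand is $\le d_0 e^{\mathrm{Re}\lambda\, t}\|u\|_\infty/\sqrt{|\lambda|}$; parametrizing the contour and computing $\int_\Gamma e^{\mathrm{Re}\lambda\,t}|\mathrm{d}\lambda|/\sqrt{|\lambda|}$ yields the $1/\sqrt{t}$ singularity for small $t$ (with the $e^{\alpha t}$ from the shift) and a bounded quantity for $t\ge R^{-1}$. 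The continuity of $\sqrt{x}(T(t)u)'$ up to $0$ and the limit $\lim_{x\to 0^+}\sqrt{x}(T(t)u)'(x)=0$ follow from the corresponding statement in part~(1) for the resolvent, passed through the integral (uniform convergence of the integral in $x$ on $[0,M]$), or alternatively from $T(t)u\in D(L^{\gamma,b})$ for $t>0$ and the domain description.

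\textbf{Main obstacle.}
The delicate point is the uniformity and the precise tracking of constants through the change of variables / perturbation: one must verify that the second-order perturbation $(\gamma(x)x-\gamma(0))\partial_x^2$ really does have small operator norm after composition with $R(\lambda,G^{\gamma(0),b})$ for $\mathrm{Re}\,\lambda$ large, which requires a two-scale decomposition (near $0$, where $\gamma(x)x$ is small, versus away from $0$, where $R(\lambda,G)$ and its derivatives decay in $\lambda$ and one only sees a genuinely bounded perturbation). Making this localization compatible with the boundary condition at $x=0$ and with the condition $u'(M)=0$ at the other endpoint — which the half-line model does not see at all — is the technically fussy part; this is presumably where the cited estimates from \cite{AM-3} do the heavy lifting, and I would lean on them rather than reprove the half-line-to-interval transfer from scratch.
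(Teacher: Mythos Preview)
Your plan for part~(1) has a genuine gap, and both alternatives you propose fail for the same underlying reason: you are trying to model the degenerate operator $\gamma(x)x\,\partial_x^2+b\,\partial_x$ near $x=0$ by a \emph{non}-degenerate operator.

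For the change-of-variables route, set $y=\int_0^x dt/\sqrt{\gamma(t)t}\sim 2\sqrt{x}/\sqrt{\gamma(0)}$. Then $u_x=(\gamma x)^{-1/2}u_y$ and a direct computation gives
\[
\gamma(x)x\,u_{xx}+b\,u_x=u_{yy}+\Bigl(\tfrac{b}{\sqrt{\gamma(x)x}}-\tfrac{(\gamma(x)x)'}{2\sqrt{\gamma(x)x}}\Bigr)u_y.
\]
The drift coefficient behaves like $b/\sqrt{\gamma(0)x}\sim (b/\gamma(0))\cdot 2/y$ near $0$, which is \emph{unbounded} whenever $b>0$. So the transformed operator is not of the form $u_{yy}+(\text{bounded})u_y$, and Remark~\ref{r1-laplace} does not apply.

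For the perturbation route, you write $L^{\gamma,b}-G^{\gamma(0),b}=(\gamma(x)x-\gamma(0))\partial_x^2$. But $\gamma(x)x-\gamma(0)\to -\gamma(0)\neq 0$ as $x\to 0$: the ``perturbation'' is of the same order as the principal part of $G^{\gamma(0),b}$ near the degenerate endpoint, and no choice of $\lambda$ makes $(\gamma(x)x-\gamma(0))\partial_x^2\,R(\lambda,G^{\gamma(0),b})$ small in norm there. The issue is structural: a uniformly elliptic model cannot absorb a degeneracy by perturbation.

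What the paper does instead is a freezing-of-coefficients argument in which the local models near $0$ \emph{retain the degeneracy}: one covers $[0,M]$ by short intervals $I^i_n$ with a quadratic partition of unity $\sum_i(\varphi^i_n)^2\equiv 1$, and on each interior interval compares $L^{\gamma,b}$ with the operator $L^i_n=\gamma(i/n)\,x\,\partial_x^2+b\,\partial_x$ on $C([0,\infty])$ (note the factor $x$ is kept; only $\gamma$ is frozen). The required resolvent and $\sqrt{x}$-gradient bounds for these degenerate half-line operators, uniform in $b\in[0,B]$, are exactly what is imported from \cite{AM-3}. Remark~\ref{r1-laplace} is used only near the endpoint $x=M$, where the operator \emph{is} uniformly elliptic and one may take $L^{n-1}_n=\gamma((n-1)/n)\partial_x^2+b\,\partial_x$ with Neumann condition at $M$. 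One then forms the parametrix $S_n(\lambda)f=\sum_i \varphi^i_n R(\lambda,L^i_n)(\varphi^i_n f)$, computes $(\lambda-L^{\gamma,b})S_n(\lambda)=I+C_1^n+C_2^n+C_3^n$, and shows the error terms are small: $C_1^n$ (coefficient freezing) is small because $\gamma$ is uniformly continuous and $n$ is large, while $C_2^n,C_3^n$ (commutators with the cutoffs) are small for $|\lambda|$ large via the resolvent and gradient bounds. The limit $\sqrt{x}(R(\lambda,L^{\gamma,b})u)'\to 0$ then follows from the corresponding property of the $R(\lambda,L^i_n)$.

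Your approach to part~(2) via the Cauchy integral would work once part~(1) is in place; the paper takes a slightly shorter path, using that for any $u\in D(L^{\gamma,b})$ and $\lambda>R$ the gradient estimate of part~(1) gives $\|\sqrt{x}u'\|_\infty\le d_0\lambda^{-1/2}\|\lambda u-L^{\gamma,b}u\|_\infty$, applying this with $u=T(t)f$, and then optimizing over $\lambda$ (taking $\lambda=1/t$ for small $t$).
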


\begin{proof} %The result can be obtained through a standard "freezing coefficients" and partition of unity argument
%(see e.g. \cite{An}), using the estimates proved for model operators with constant coefficients in \cite{AM-3}.  
W.l.o.g. we may  assume $M=1$.

(1) For each $n\in\N$ and $i\in \{1,\ldots, n-1\}$ set $I^i_n=\left[\frac{i-1}{n},\frac{i+1}{n}\right]$ and  let $\{\varphi^i_n\}_{i=1}^{n-1}\subset C^\infty(\R)$ such that $\sum_{i=1}^{n-1}(\varphi^i_n)^2\equiv 1$ on $[0,1]$, 
${\rm supp} (\varphi^i_n)\subset \left[\frac{i-1}{n},\frac{i+1}{n}\right]$ for $i=2,\ldots, n-2$, ${\rm supp} (\varphi^1_n)\subset \left]-\infty, \frac{2}{n}\right]$ and ${\rm supp} (\varphi^{n-1}_n)\subset \left[\frac{n-2}{n}, \infty\right[$. Observe that if $v=\sum_{i=1}^{n-1} v_i\varphi^i_n$ with $v_i\in C([0,1])$ for $i=1,\ldots,n$, then 
\begin{equation}\label{eq:v3} 
||v||_\infty\leq 3 \sup_{i=1, \ldots,n-1}||v_i||_\infty.
\end{equation}
For every  $i\in\{1, \ldots,n-2\}$ we   consider the operators
\[ 
L^i_n u =  \gamma\left(\frac i n\right)xu''(x)+bu'(x),\quad u\in D(L^i_n), 
\]
with domain $D(L^i_n)$ defined as follows: if  $b=0$ 
\[
D(L^i_n)=
\{ u\in C([0,\infty]) \cap C^2(]0,\infty[)\, \mid\,  \lim_{x\to 0^+} L^i_n u(x) =0; 
   \lim_{x\to +\infty}L^i_n u(x)=0\}, 
	\]
if $b>0$ 
\[	
D(L^i_n)=\{ u\in C^1([0,\infty[)\cap C^2(]0,\infty[)\cap C([0,\infty])\,\mid\, 
\lim_{x\to 0^+} xu''(x)=0,\ \lim_{x\to + \infty} L^i_n u(x)=0 \}. 
\]
For $i=n-1$ we consider the operator
\[
L^{n-1}_n u =  \gamma\left(\frac{ n-1}{n}\right)u''(x)+bu'(x),\quad u\in D(L^{n-1}_n),
\]
with domain
\[
D(L^{n-1}_n)=\{u\in C^2([-\infty,1])\, \mid\, u'(1)=0\}.
\]
By \cite[Corollary 4.2] {AM-3} 
%(with $\theta=0$) 
and Remark \ref{r1-laplace}, there exists $d_1=d_1(B, \gamma_0)>0, R=R(B,\gamma_0)>0$ such that, for every  $\lambda\in\C$ with ${\rm Re}\lambda>R$,  we have
\begin{equation}\label{e.star}
||R(\lambda, L^i_n)|| \leq \frac{d_1}{|\lambda|},\quad n\in\N,\, i=1,\ldots, n-1.
\end{equation}
Fix $\lambda\in\C$, with ${\rm Re}\lambda>R$. For each $n\in\N$ let $S_n(\lambda)\colon C([0,1])\to C([0,1])$ be the operator defined by
\[ 
S_n(\lambda) f= \sum_{i=1}^{n-1} \varphi^i_n R(\lambda, L^i_n)(\varphi^i_n f),\quad f\in C([0,1]).
\]
By \eqref{eq:v3} and \eqref{e.star} we get, for every  $n\in\N$, that
\[ 
||S_n(\lambda)f||_\infty \leq 3 \sup_{i=1,\ldots,n-1} ||R(\lambda, L^i_n)(\varphi^i_n f)|| \leq \frac{3d_1}{|\lambda|}||f||_\infty,\quad f\in C([0,1]).
\]
Since $R(\lambda, L^i_n)(\varphi^i_n f)\in D(L^i_n)$ for every $i=1,\ldots, n-1$ and $f\in C([0,1])$, $\varphi^{n-1}_n\equiv 0$ and $\varphi^{n-1}_n\equiv 1$ in an neighbourhood of $0$ and in an neighbourhood of $1$ respectively, we have $\varphi^i_nR(\lambda, L^i_n)(\varphi^i_n f)\in D(L^{\gamma,b})$ and so we can consider $(\lambda-\Lb)(S_n(\lambda)f)$ for every $f\in C([0,1])$. In particular, a straightforward calculation gives
\begin{eqnarray*}
& &(\lambda-\Lb)(S_n(\lambda)f)= f + \sum_{i=1}^{n-1} \varphi^i_n (L^i_n-\Lb)R(\lambda, L^i_n)(\varphi^i_n f)\\
& & - \sum_{i=1}^{n-1} \Lb(\varphi^i_n) R(\lambda, L^i_n)(\varphi^i_n f) 
-2 \gamma(x) \sum_{i=1}^{n-1}(\varphi^i_n)' x(R(\lambda, L^i_n)(\varphi^i_n f))' \\
& & =f + C_1^n(\lambda)f + C_2^n(\lambda) f + C_3^n(\lambda) f,\quad f\in C([0,1]),\ n\in\N.
\end{eqnarray*}
Applying again \eqref{eq:v3} and \eqref{e.star} we obtain 
\begin{equation}\label{e.c2}
||C_2^n(\lambda)f||_\infty\leq 3||f||_\infty \frac{d_1}{|\lambda|} \sup_{i=1, \ldots,n-1} ||\Lb(\varphi^i_n)||_\infty, \quad f\in C([0,1]),\, n\in\N.
\end{equation}
On the other hand, by  \cite[Proposition 5.1(2)]{AM-3}, Remark \ref{r1-laplace} and \eqref{eq:v3}, there exists $d_2=d_2(B, \gamma_0)>0$ such that 
\begin{equation}\label{e.c3}
 ||C_3^n(\lambda) f||_\infty \leq 3d_2 ||\gamma||_\infty \sup_{i=1,\ldots,n-1}||(\varphi^i_n) '||_\infty \frac{||f||_\infty}{\sqrt{|\lambda|}}, \quad f\in C([0,1]),\ n\in\N. 
\end{equation}
In order to estimate $||C_1^n(\lambda)||$, we  observe, for every  $n\in\N$, that 
\begin{eqnarray*}
& &\varphi^i_n(L^i_n-\Lb)R(\lambda, L^i_n) (\varphi^i_nf) =\\
& & \qquad =\left\{\begin{array}{ll}
 \varphi^i_n \left[\gamma\left(\frac i n\right)-\gamma(x)\right]x(R(\lambda, L^i_n) (\varphi^i_nf))''(x) & \mbox{if $i=1,\ldots, n-2$,}\\
\varphi^{n-1}_n \left[\gamma\left(\frac{n-1}{n}\right)-\gamma(x)x\right](R(\lambda, L^{n-1}_n) (\varphi^{n-1}_nf))''(x) & \mbox{if $i= n-1$}.
\end{array}\right.
\end{eqnarray*}
Fixed $\varepsilon>0$, we now choose $\delta>0$ so  that $|\gamma(x)-\gamma (y)|<\varepsilon$ if $|x-y|<\delta$ and that $|\gamma(x)-\gamma (y)|+\Gamma_0|1-x|<\varepsilon$ if $x,y\in [1-\delta, 1]$, where $\Gamma_0:=\max_{x\in [0,1]}\gamma(x)$. If we take   $\overline n\in\N$ such that $\frac{2}{\overline n}<\delta$, 
then we have that  $|\gamma(x)-\gamma(\frac{i}{\overline n})|<\varepsilon$ if $x\in I^i_{\overline n}$ and $i\in\{1,\ldots, \overline{n}-2\}$ and that $|\gamma\left(\frac{\overline{n}-1}{\overline n}\right)-\gamma(x)x|<\varepsilon$ if $x\in I_{\overline n}^{\overline{n}-1}$. 
So,  it follows from \cite[Proposition 5.1(2)]{AM-3}, Remark \ref{r1-laplace} and \eqref{eq:v3} that 
\begin{equation}\label{e.c1}
||C_1^{\overline n}(\lambda)f||_\infty \leq 3 \varepsilon \frac{1}{\gamma_0}\left(1+d_1 +  B \frac{d_1}{|\lambda|}\right) ||f||_\infty, \quad f\in C([0,1]).
\end{equation}
 Therefore, combining \eqref{e.c2}, \eqref{e.c3} and \eqref{e.c1}, we obtain 
\begin{eqnarray*}
& & ||C_1^{\overline n}(\lambda)||+||C_2^{\overline n}(\lambda)||+||C_3^{\overline n}(\lambda)||\leq \frac{d_1}{|\lambda|} \sup_{ i=1, \ldots,\overline{n}-1} ||\Lb(\varphi^i_{\overline n})||_\infty +\\
& & + 3d_2 ||\gamma||_\infty \sup_{i=1,\ldots,\overline{n}-1}||(\varphi^i_{\overline{n}})'||_\infty \frac{1}{\sqrt{|\lambda|}}+ 
3 \varepsilon \frac{1}{\gamma_0}\left(1+d_1 +  B \frac{d_1}{|\lambda|}\right). 
\end{eqnarray*}
Now, let $\varepsilon >0$ be small enough that $3\varepsilon \frac{1+d_1}{\gamma_0}<1/4$ and $R'>R$ such that
\[ 
\frac{d_1}{|\lambda|} \sup_{ i=1, \ldots,\overline n-1} ||L(\varphi^i_{\overline n})||_\infty + 3d_2 ||\gamma||_\infty \sup_{i=1,\ldots,\overline n-1}||(\varphi^i_{\overline n})'||_\infty  \frac{1}{\sqrt{|\lambda|}}+ 
 B \frac{d_1}{|\lambda|} <\frac 1 4 
 \]
for every  $\lambda\in C\setminus [0,+\infty)$ with $|\lambda|\geq R'$ (in particular,  with ${\rm Re}\lambda\, >R'$).  So,  $R'=R'(\gamma_0, B)$. 
Since 
  \[ 
	\|C_{1}^{\overline{n}}(\lambda)+C_{2}^{\overline{n}}(\lambda) +C_{3}^{\overline{n}}(\lambda)\|<1/2,
	\]
   the operator $B(\lambda)=(\lambda-\Lb)S_{\overline{n}}(\lambda)$ is invertible in $C([0,1])$ with  $\|(B(\lambda))^{-1}\|\leq 2$. So, for every $\lambda\in \C$ with ${\rm Re}\lambda> R'$,  we have $R(\lambda, \Lb)=S_{\overline n}(\lambda)B(\lambda)^{-1}$ and 
\begin{equation}\label{e.dis-9}
\|R(\lambda, \Lb)\|\leq \frac{6 d_1 }{|\lambda|}.
\end{equation}
Fixed  $\lambda\in \C$ with ${\rm Re}\lambda> R'$, it follows via  \cite[Proposition 5.1(2)]{AM-3}, Remark \ref{r1-laplace} and \eqref{eq:v3} that, for every $u\in C([0,1])$, we have
\begin{eqnarray*}
& & ||\sqrt{x} (R(\lambda, \Lb)u)'||=||\sqrt{x} (S_{\overline n}(\lambda)B(\lambda)^{-1}u)'||\\
 & & \leq ||\sum_{i=1}^{\overline{n}-1} (\varphi^i_{\overline n})' R(\lambda, L^i_{\overline n})(\varphi^i_{\overline n}B(\lambda)^{-1} u)||_\infty 
+ ||\sum_{i=1}^{\overline{n}-1} \varphi^i_{\overline n} \sqrt{x}[R(\lambda, L^i_{\overline n})(\varphi^i_{\overline n}B(\lambda)^{-1} u)]'||_\infty \\
& & \leq \left(\frac{18\overline{n} d_1 \sup_{i=1,\ldots,  {\overline n}-1}||(\varphi^i_{\overline n})'||_\infty}{|\lambda|} + \frac{3\overline{n} d_2\sup_{i=1,\ldots,  {\overline n}-1}||\varphi^i_{\overline n}||_\infty}{\sqrt{|\lambda|}}\right)||u||_\infty.
\end{eqnarray*} 
If we choose $d_0= \max\{ 18 \overline{n} d_1 \sup_{i=1,\ldots, \overline{n}}||(\varphi^i_{\overline n})'||_\infty + 3 \overline{n}d_2\sup_{i=1,\ldots,  {\overline n}-1}||\varphi^i_{\overline n}||_\infty, 6d_1\}$, then the thesis now follows.  
We also have
\begin{eqnarray*} 
\lim_{x\to 0^+}\sqrt{x} (R(\lambda, \Lb)u)'(x)= \lim_{x\to 0}\sqrt{x}\left( \sum_{i=1}^{\overline{n}-1} (\varphi^i_{\overline n})' R(\lambda, L^i_{\overline n})(\varphi^i_{\overline n}B(\lambda)^{-1} u)  + \right.\\
\left.+ \sum_{i=1}^{\overline{n}-1} (\varphi^i_{\overline n})' R(\lambda, L^i_{\overline n})(\varphi^i_{\overline n}B(\lambda)^{-1} u)'\right)=0,\end{eqnarray*}
by   \cite[Propositions 5.1(2) and 5.2]{AM-3} and Remark \ref{r1-laplace}.

(2) Since the resolvent operators of $\Lb$ satisfy the part (1) of this proposition, we can apply \cite[Proposition 2.1.11]{L} to conclude that,   for every $\lambda\in\C$ with $\lambda\not=R$ and $|\ar(\lambda-R)|<\pi-\arctan{d_0}$, we have
\[
 \|R(\lambda, \Lb)\|\leq \frac{\widetilde{d_0} }{|\lambda-R|},
\]
where $\widetilde{d}_0= 2d_0(1/(4d_0^2)+1)^{-1/2}$. Then 
 there exist $K=K(B,\gamma)>0$  and $\alpha= \alpha(B,  \gamma)$ such that
\[
||t(\Lb-\alpha)T(t)|| \leq Ke^{\alpha t},\quad t\geq 0
\]
(see, f.i.,   \cite[Proposition 2.1.1]{L} and also the estimates in the relative proof).
Since $(T(t))_{t\geq 0}$  contractive, it  follows that
\[
 ||tLT(t)||\leq (K+1)e^{\alpha t},\quad t\geq 0.
\]
Finally, if $u\in D(\Lb)$, then part (1) of this proposition  ensures that, for every $\lambda\in\R$, $\lambda>R$,  we have 
\[
 ||\sqrt{x}u'||_\infty \leq \frac{d_0}{\sqrt{\lambda}}||\lambda u - \Lb u||_\infty.
\]
As  the semigroup $(T(t))_{t\geq  0}$ is  analytic and hence,  $T(t) f\in D(\Lb)$ for every $f\in C[0,1]$ and $t>0$, it follows that
\[ 
||\sqrt{x}(T(t)f)'||_\infty \leq \frac{d_0}{\sqrt{\lambda}}||\lambda T(t)f - \Lb T(t)f||_\infty \leq \left(d_0\sqrt{\lambda} + \frac{d_0Ke^{\alpha t}}{t\sqrt\lambda}\right)||f||_\infty,
\]
for every $f\in C[0,1]$ and $t>0$. So, if we choose $\lambda=t^{-1}$ for every $t<R^{-1}$ and $\lambda=R+1$ for every $t\geq R^{-1}$, then we get the assertion.  
Moreover, $\lim_{x\to 0^+}\sqrt{x}(T(t) f(x))'=0$,  for $t>0$.  Indeed, this property holds for every $u\in D(\Lb)$ by part (1) of this proposition.
\end{proof}

\begin{remark}\rm Since the operator $(\Lb,D(\Lb))$ generates a bounded   analytic $C_0$--semigroup $(T(t))_{t\geq 0}$ of positive contractions and angle $\pi/2$ on $C([0,M])$, for every $\theta\in (\pi/2,\pi)$ there exists $M_0=M_0(\theta)>0$ such that $\|R(\lambda,\Lb)\|\leq M_0/|\lambda|$ for all $\lambda\in \C\setminus\{0\}$ with $|{\rm arg}(\lambda)|<\theta$. Moreover, there exists $M_1>0$ such that $\|t\Lb T(t)\|\leq M_1$ for every $t\geq 0$. But,  the constants $M_0$ and $M_1$ can depend on the functions $b$ and $\gamma$.

\end{remark}

\begin{corollary}\label{c.1-gradiente}  Let $B, M>0$ and  let $\gamma\in C([0,M])$ be a strictly positive function. Then there exist $\overline{\varepsilon}>0$, $C>0$ and $D>0$  depending only on $B$ and $\gamma$ such that,  for every $0<\varepsilon<\overline{\varepsilon}$, $b\in [0,M]$  and    
$u\in D(\Lb)$, we have 
\[ 
\|\sqrt{x} u'\|_\infty\leq \frac{C} {\varepsilon}  \|u\|_\infty+ D\varepsilon \|\Lb u\|_\infty.
 \]
\end{corollary}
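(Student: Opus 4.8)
The plan is to deduce this interpolation-type inequality directly from Proposition~\ref{p.1-gradiente}(1), whose constants $d_0$ and $R$ depend only on $B$ and $\gamma$. The key observation is that for $u\in D(\Lb)$ and any real $\lambda>R$ we have $u=R(\lambda,\Lb)(\lambda u-\Lb u)$, so the second estimate in Proposition~\ref{p.1-gradiente}(1) applied to $f=\lambda u-\Lb u$ yields
\[
\|\sqrt{x}\,u'\|_\infty=\|\sqrt{x}\,(R(\lambda,\Lb)f)'\|_\infty\leq \frac{d_0}{\sqrt{\lambda}}\,\|\lambda u-\Lb u\|_\infty\leq d_0\sqrt{\lambda}\,\|u\|_\infty+\frac{d_0}{\sqrt{\lambda}}\,\|\Lb u\|_\infty.
\]
This already has the right shape: the first term grows like $\sqrt{\lambda}$ and the second decays like $1/\sqrt{\lambda}$. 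It remains only to reparametrise $\lambda$ in terms of a small parameter $\varepsilon$.

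Concretely, I would set $\lambda=\varepsilon^{-2}$. The constraint $\lambda>R$ becomes $\varepsilon<R^{-1/2}$, so one takes $\overline{\varepsilon}:=R^{-1/2}$ (or any slightly smaller positive number to have a strict inequality), which depends only on $B$ and $\gamma$ since $R$ does. With this choice $\sqrt{\lambda}=1/\varepsilon$ and $1/\sqrt{\lambda}=\varepsilon$, so the displayed inequality becomes
\[
\|\sqrt{x}\,u'\|_\infty\leq \frac{d_0}{\varepsilon}\,\|u\|_\infty+d_0\,\varepsilon\,\|\Lb u\|_\infty,
\]
and one may simply take $C=D=d_0$, both depending only on $B$ and $\gamma$ as required. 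The statement quantifies over $b\in[0,M]$, but the point is precisely that $d_0$ and $R$ from Proposition~\ref{p.1-gradiente} are uniform in $b\in[0,B]$; so provided one reads the range of $b$ consistently (the hypothesis should be $b\in[0,B]$, matching the setup of $\Lb$), uniformity is automatic and no extra work is needed.

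There is essentially no obstacle here: the corollary is a soft consequence of the resolvent gradient bound already established, and the only thing to be careful about is bookkeeping of which constants depend on what — ensuring that $\overline{\varepsilon}$, $C$, $D$ are expressed through $d_0(B,\gamma)$ and $R(B,\gamma)$ and hence depend only on $B$ and $\gamma$, not on the particular $b$. One could alternatively phrase the argument using the general interpolation inequality for generators of analytic semigroups (e.g.\ \cite[Proposition 2.2.9]{L}), but invoking Proposition~\ref{p.1-gradiente}(1) directly is cleaner and keeps the constants transparent.
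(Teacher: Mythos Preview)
Your proof is correct and essentially identical to the paper's: both write $u=R(\lambda,\Lb)(\lambda u-\Lb u)$, apply the gradient bound from Proposition~\ref{p.1-gradiente}(1), and then set $\sqrt{\lambda}=1/\varepsilon$ to obtain the interpolation inequality with $C=D=d_0$. Your observation that the hypothesis should read $b\in[0,B]$ rather than $b\in[0,M]$ is also on point.
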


\begin{proof}
Fix $u\in D(\Lb)$ and $\lambda\in \C$ with ${\rm Re}\lambda >R$, where $R$ is the costant which appears in Proposition \ref{p.1-gradiente}(1).  Then  there exists $v\in C([0,M])$ such that $R(\lambda, \Lb)v=u$ and hence, by Propositon \ref{p.1-gradiente}, we have that 
\begin{eqnarray}\label{e.dimen}
\|\sqrt{x}u'\|_\infty&=&\|\sqrt{x}(R(\lambda, L^{\gamma,b})v)'\|_\infty\leq \frac{d_0}{\sqrt{|\lambda|}}\|\lambda u-L^{\gamma,b}u\|_\infty\nonumber\\
&\leq & d_0\left(\sqrt{|\lambda|}\|u\|_\infty+\frac{1}{\sqrt{|\lambda|}}\|L^{\gamma,b}u\|_\infty\right),
\end{eqnarray}
where $d_0$ depends only on $B$ and $\gamma$.  Now, the assertion follows from  \eqref{e.dimen} and from Proposition \ref{p.1-gradiente}(1) by choosing $\overline\varepsilon =R^{-1}$ and, for $0<\varepsilon<\overline\varepsilon$,  $\sqrt{|\lambda|}=1/\varepsilon$.
\end{proof}

%%%%%%%%%%%%%%%%%%%%%%%%%%%%%%%%%%%%%%%%%%%%%%%%%%%%%%%%%%%%%%%%%
%%%%%%%%%%%%%%%%%%%%%%%%%%%%%%%%%%%%%%%%%%%%%%%%%%%%%%%%%%%%%%%%%
Set $C^2_\diamond([0,M])=\{u\in C^2([0,M]):\ u'(M)=0\}$. Then

\begin{prop}\label{p.1-core} Let  $b\geq 0$ and let $\gamma \in C([0,M])$ be a  strictly positive function. Then the  space $C^2_\diamond([0,M])$ is a core for the operator  $\Lb$ with domain $D(\Lb)$ defined according to  \eqref{dom:1} if $b=0$ or to \eqref{dom:2} if $b>0$.
\end{prop}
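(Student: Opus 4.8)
The plan is to show that $C^2_\diamond([0,M])$ is a core for $\Lb$ by exhibiting, for each $u\in D(\Lb)$, a sequence $u_n\in C^2_\diamond([0,M])$ with $u_n\to u$ and $\Lb u_n\to \Lb u$ in $C([0,M])$; equivalently, since $\Lb$ is a generator (hence closed) and $R>0$ lies in its resolvent set, it suffices to prove that $(R-\Lb)(C^2_\diamond([0,M]))$ is dense in $C([0,M])$. I would work with the latter formulation. The main point is the behaviour at the degenerate endpoint $x=0$, since near $x=M$ functions in $D(\Lb)$ are already $C^2$ with $u'(M)=0$, so no approximation is needed there.

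First I would treat the case $b>0$. Here $u\in C^1([0,\delta])\cap C^2(]0,M])$ with $\lim_{x\to0^+}xu''(x)=0$, so the only failure of membership in $C^2_\diamond$ is that $u''$ need not extend continuously to $0$. I would regularize near $0$: fix a cutoff $\eta\in C^\infty(\R)$ with $\eta\equiv1$ on $[2\rho,M]$, $\eta\equiv0$ on $[0,\rho]$, and replace $u$ on $[0,2\rho]$ by a second-order Taylor-type polynomial $p(x)=u(0)+u'(0)x$ (using that $u\in C^1$ near $0$ and that $xu''\to0$), setting $u_\rho=\eta u+(1-\eta)p$. Then $u_\rho\in C^2_\diamond([0,M])$, $u_\rho\to u$ uniformly as $\rho\to0$, and $\Lb u_\rho=\gamma x u_\rho''+bu_\rho'$; on $[2\rho,M]$ this equals $\Lb u$, while on $[0,2\rho]$ one estimates $\|\gamma x u_\rho''+bu_\rho'-\Lb u\|_{L^\infty([0,2\rho])}$ using $\|xu''\|_{L^\infty([0,2\rho])}\to0$, $u'$ continuous at $0$, and that the cutoff derivatives $\eta',\eta''$ are multiplied by factors $u-p$ and $x$ that are $o(\rho)$ and $O(\rho)$ respectively, killing the $\rho^{-1}$ and $\rho^{-2}$ blow-up of the cutoff. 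Hence $\Lb u_\rho\to\Lb u$ uniformly, giving the core property for $b>0$.

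For $b=0$ the situation is subtler: a function $u\in D(\Lb)$ need not be $C^1$ up to $0$ — indeed $u'$ may blow up like a logarithm — so a crude polynomial replacement will not keep $u_\rho\in C^2$ while controlling $\gamma x u_\rho''$. Here I would instead use the resolvent: by the generation result quoted from \cite{Met,CM,CMPR,CT}, $R(R,\Lb)$ maps $C([0,M])$ onto $D(\Lb)$, so it is enough to show $R(R,\Lb)f$ can be approximated (with its image under $R-\Lb$, i.e. $f$ itself, approximated) by elements of $C^2_\diamond$. Equivalently I would show directly that $(R-\Lb)C^2_\diamond([0,M])$ is dense: given $f\in C([0,M])$, approximate $f$ uniformly by $f_n\in C^2_\diamond$ (polynomials with vanishing derivative at $M$ are uniformly dense), and show $R(R,\Lb)f_n$ can be further corrected to land in $C^2_\diamond$. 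The cleanest route is probably to invoke that the Clément–Timmermans/Metafune description also yields that $C^2_\diamond([0,M])\subset D(\Lb)$ and that elements of $C^2_\diamond$ automatically satisfy the Ventcel-type condition $\lim_{x\to0^+}\Lb u(x)=\gamma(0)\cdot0\cdot u''(0)=0$ — wait, this is automatic — so $C^2_\diamond([0,M])\subset D(\Lb)$ unconditionally, and then use an explicit approximation: for $u=R(R,\Lb)f$ with $f\in C^2_\diamond$, define $u_\rho$ by solving, or by patching $u$ on $[\rho,M]$ with a $C^2$ extension to $[0,\rho]$ matching $u,u',u''$ at $\rho$ via a cubic, and check $\Lb u_\rho\to\Lb u$ using $\|\Lb u\|_{L^\infty([0,\rho])}\to0$ (since $\Lb u=Ru-f\to Ru(0)-f(0)$ and the defining condition forces this limit to be $0$) together with the Schauder-type a priori bound $\|x u''\|_{L^\infty}\lesssim\|u\|_\infty+\|\Lb u\|_\infty$ from Proposition \ref{p.1-gradiente}-type estimates to control the patched piece.

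The main obstacle is exactly this $b=0$ endpoint regularization: one must produce $C^2$ approximants whose $\Lb$-images converge, despite $u'$ possibly being unbounded near $0$, and the key analytic input making this work is the boundary condition $\lim_{x\to0^+}\Lb u(x)=0$ for $b=0$ — this is what guarantees $\Lb u$ is small on $[0,\rho]$ and lets the patching error vanish. I would organize the $b=0$ argument around this fact plus the gradient/resolvent estimates already available, and keep the $b>0$ argument as the elementary Taylor-polynomial patch described above.
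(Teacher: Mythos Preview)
The paper does not give an independent proof here: it simply says the result follows ``with the same argument of Proposition 3.1 in \cite{AM-3}, with some minor changes.'' So there is no detailed argument in the paper to compare against. Your direct-approximation strategy is sound and is almost certainly what the cited reference does: show $C^2_\diamond([0,M])\subset D(\Lb)$ (which, as you observe, is automatic from the boundary conditions), then for each $u\in D(\Lb)$ build $u_\rho\in C^2_\diamond$ by leaving $u$ alone on $[\rho,M]$ (so the condition $u'(M)=0$ is preserved) and replacing it on $[0,\rho]$ by a low-order polynomial matched at $\rho$.

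Your treatment of the case $b>0$ is clean and correct. For $b=0$ the argument wanders: the detour through density of $(R-\Lb)(C^2_\diamond)$ and the resolvent is unnecessary, and the ``Schauder-type a priori bound $\|xu''\|_\infty\lesssim\|u\|_\infty+\|\Lb u\|_\infty$'' you invoke is not what Proposition~\ref{p.1-gradiente} gives (that proposition bounds $\sqrt{x}\,u'$, not $xu''$). Fortunately you do not need it: when $b=0$ the identity $\gamma(x)xu''(x)=\Lb u(x)$ gives $\|xu''\|_{L^\infty([0,\rho])}\le\gamma_0^{-1}\|\Lb u\|_{L^\infty([0,\rho])}\to0$ directly from the Ventcel condition. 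The simplest fix is to take $u_\rho$ equal on $[0,\rho]$ to the quadratic Taylor polynomial of $u$ at $\rho$, i.e.\ $q_\rho(x)=u(\rho)+u'(\rho)(x-\rho)+\tfrac12 u''(\rho)(x-\rho)^2$; then $\Lb u_\rho(x)=\gamma(x)x\,u''(\rho)$ on $[0,\rho]$, and both $\sup_{[0,\rho]}|xu''(\rho)|\le|\rho u''(\rho)|\to0$ and $\sup_{[0,\rho]}|\Lb u|\to0$ give $\Lb u_\rho\to\Lb u$. For $u_\rho\to u$ one uses $\rho|u'(\rho)|\to0$, which follows by integrating $|u''(t)|=o(1/t)$. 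Organized this way, both cases $b>0$ and $b=0$ become the same short patching argument.
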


\begin{proof} 
 The assertion follows with the same argument of Proposition 3.1 in \cite{AM-3}, with some minor chages.
\end{proof}

%%%%%%%%%%%%%%%%%%%%%%%%%%%%%%%%%%%%%%%%%%%%%%%%%%%%%%%%%%%%%%%%%%

\begin{remark}\label{r.compatezza}\rm The inclusion $(D(\Lb), \|\ \|_{\Lb})\hookrightarrow C([0,M])$ is compact (here, $\|\ \|_{\Lb}$ denotes the graph norm), see \cite[Theorem 4.1]{Met} or \cite[Lemma 3.2]{CM}. So $(\Lb,D(\Lb))$ has compact resolvent, \cite[Proposition 4.25]{EN}. Since $(\Lb,D(\Lb))$ generates a bounded analytic $C_0$--semigroup $(T(t))_{t\geq 0}$ on $C([0,M])$ (and hence, a norm continuous $C_0$--semigroup) and has compact resolvent, $(T(t))_{t\geq 0}$ is also compact, \cite[Theorem 4.29]{EN}.
\end{remark}

%%%%%%%%%%%%%%%%%%%%%%%%%%%%%%%%%%%%%%%%%%%%%%%%%%%%%%%%%%%%%%%%%%%%%%%%%%%%%%%%%%%

\section{The $d$-dimensional case with constant  drift term}

Set $Q^d=[0,M]^d$ and, for each $i=1,\ldots,d$, define $\partial (Q^d)_i:=\{x\in Q^d\, \mid \, x_i=0\}$ and $\partial (Q^d)^i:=\{x\in Q^d\, \mid \, x_i=M\}$.
Let $B>0$ and fix $b=(b_1,b_2,\ldots,b_d)\in [0,B]^d$ and $\gamma=(\gamma_1,\gamma_2,\ldots,\gamma_d)\in C([0,M])^d$, with each $\gamma_i$  strictly positive.

For each $i\in\{1,\ldots,d\}$  set $L^{\gamma_i,b_i}=\gamma_i(x_i) x_i\partial^2_{x_i}+b_i\partial_{x_i}$,  with domain $D(L^{\gamma_i, b_i})$ defined according to \eqref{dom:1} if $b_i=0$ or to \eqref{dom:2} if $b_i>0$.

We know that each operator $(L^{\gamma_i,b_i},D(L^{\gamma_i,b_i}))$ generates a bounded analytic  compact $C_0$-semigroup $(T_i(t))_{t\geq 0}$ of positive  contractions and of angle $\pi/2$ in $C([0,M])$.  So,  the injective tensor product $(T(t))_{t\geq 0}=(\hat{\otimes}_{\epsilon, i=1}^d T_i(t))_{t\geq 0}$ is also a  \textit{bounded analytic  compact $C_0$-semigroup of positive contractions and of angle $\pi/2$} in  $C([0,M]^d)=\hat{\otimes}_{d,\epsilon} C([0,M])$, see \cite[Proposition, p.23, and p.24]{N}, \cite[Appendix A]{CC1} (see also \cite[\S 2.2]{AM-2}). In particular, the infinitesimal generator $(\cL^{\gamma,b}, D(\cL^{\gamma, b}))$ of  $(T(t))_{t\geq 0}$ is the closure of the operator  
\begin{equation}\label{eq:tensor}
L^{\gamma,b} = \sum_{i=1}^d  L^{\gamma_i,b_i}\otimes \left(\otimes_{j\not=i}I_{x_j}\right),
\end{equation}
with domain  $\otimes_{i=1}^dD(L^{\gamma_i,b_i})$,
where $I_{x_j}$  denote the identity map acting in  $C([0,M])$  with respect to the variable $x_j$. Clearly, for every $u\in \otimes_{i=1}^dD(L^{\gamma_i,b_i})$, we have
\[
\cL^{\gamma,b} u(x) =\sum_{i=1}^d \gamma_i(x_i) x_i\partial_{x_i}^2u + b_i\partial_{x_i}u .
\]
Moreover, if we  define  $C_{\diamond}^2(Q^d)=\cap_{i=1}^d\{u\in C^2(Q^d):\ \forall x\in \partial(Q^d)^i\ \partial_{x_i}u(x)=0  \}$ (such a space is a Banach space when endowed with the supremum norm $\|\ \|_{2,\infty}$),  then the  following holds.

\begin{prop}\label{p.-dcore} Let $b=(b_1,b_2,\ldots,b_d)\in [0,\infty[^d$ and $\gamma=(\gamma_1,\gamma_2,\ldots,\gamma_d)$, with each $\gamma_i$  a strictly positive continuous function on $[0,M]$. Then the space $C_{\diamond}^2(Q^d)$ is a core for the operator  $(\cL^{\gamma, b}, D(\cL^{\gamma, b}))$. 
\end{prop}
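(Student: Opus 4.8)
The plan is to prove that $C^2_\diamond(Q^d)$ is a core for $(\cL^{\gamma,b}, D(\cL^{\gamma,b}))$ by combining the one–dimensional core property from Proposition~\ref{p.1-core} with the tensor product structure of the semigroup. Recall that by construction $(\cL^{\gamma,b}, D(\cL^{\gamma,b}))$ is the closure of $L^{\gamma,b}$ defined on the algebraic tensor product $\otimes_{i=1}^d D(L^{\gamma_i,b_i})$; hence it suffices to show that every elementary tensor $u = u_1\otimes\cdots\otimes u_d$ with $u_i\in D(L^{\gamma_i,b_i})$ can be approximated in the graph norm of $\cL^{\gamma,b}$ by elements of $C^2_\diamond(Q^d)$, since finite linear combinations of such tensors are dense in $D(\cL^{\gamma,b})$ for the graph norm (this being the definition of the closure), and $C^2_\diamond(Q^d)$ is a linear space.

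First I would use Proposition~\ref{p.1-core}: for each $i$, the space $C^2_\diamond([0,M])$ is a core for $L^{\gamma_i,b_i}$, so for a given $u_i\in D(L^{\gamma_i,b_i})$ and $\varepsilon>0$ there is $v_i\in C^2_\diamond([0,M])$ with $\|u_i-v_i\|_\infty < \varepsilon$ and $\|L^{\gamma_i,b_i}u_i - L^{\gamma_i,b_i}v_i\|_\infty < \varepsilon$. Then $v := v_1\otimes\cdots\otimes v_d$ lies in $C^2_\diamond(Q^d)$: it is $C^2$ on $Q^d$ (a product of $C^2$ functions of separate variables) and $\partial_{x_i}v = v_1\otimes\cdots\otimes v_i'\otimes\cdots\otimes v_d$ vanishes on $\partial(Q^d)^i$ because $v_i'(M)=0$. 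The remaining step is a telescoping estimate: writing
\[
u - v = \sum_{k=1}^d u_1\otimes\cdots\otimes u_{k-1}\otimes(u_k-v_k)\otimes v_{k+1}\otimes\cdots\otimes v_d,
\]
and similarly expanding $\cL^{\gamma,b}u - \cL^{\gamma,b}v = \sum_{i=1}^d \big(L^{\gamma_i,b_i}\otimes \otimes_{j\neq i} I_{x_j}\big)(u - v)$, one controls each summand by a product of supremum norms of the $u_j$, $v_j$, $L^{\gamma_j,b_j}u_j$, $L^{\gamma_j,b_j}v_j$. Since the $v_j$ can be taken with $\|v_j\|_\infty \le \|u_j\|_\infty + \varepsilon$ and $\|L^{\gamma_j,b_j}v_j\|_\infty \le \|L^{\gamma_j,b_j}u_j\|_\infty + \varepsilon$, every term is $O(\varepsilon)$ with a constant depending only on the fixed data $u_1,\dots,u_d$; letting $\varepsilon\to 0$ gives $u$ in the graph-norm closure of $C^2_\diamond(Q^d)$.

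The only genuinely delicate point is the bookkeeping needed to see that these mixed tensors lie in the natural domain where the computations make sense and that the $C([0,M]^d) = \hat\otimes_{d,\epsilon}C([0,M])$ identification lets one estimate $\|w_1\otimes\cdots\otimes w_d\|_\infty = \prod_i\|w_i\|_\infty$ — this is exactly the defining property of the injective ($\varepsilon$-)tensor norm, and the action of $L^{\gamma_i,b_i}\otimes\otimes_{j\neq i}I_{x_j}$ on an elementary tensor is $L^{\gamma_i,b_i}w_i\otimes\otimes_{j\neq i}w_j$. Granting these standard facts about injective tensor products (as referenced in the paper), the argument is a routine density-plus-telescoping computation, entirely parallel to the proof of Proposition~3.1 in \cite{AM-3}; accordingly I expect the author's proof to be a short remark citing that analogy, and the expected main obstacle is purely notational rather than conceptual.
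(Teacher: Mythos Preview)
Your approach is essentially the paper's: use Proposition~\ref{p.1-core} in each coordinate to see that $\otimes_{i=1}^d C^2_\diamond([0,M])$ is a core for $(\cL^{\gamma,b},D(\cL^{\gamma,b}))$, and then pass to the larger space $C^2_\diamond(Q^d)$. The paper states the first step in one line (your telescoping argument is exactly the verification behind that line), so the two proofs coincide.

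One point you glossed over: showing that $\otimes_{i=1}^d C^2_\diamond([0,M])$ is graph-norm dense in $D(\cL^{\gamma,b})$ yields only that this \emph{tensor subspace} of $C^2_\diamond(Q^d)$ is a core; to conclude that $C^2_\diamond(Q^d)$ itself is a core you must also know $C^2_\diamond(Q^d)\subseteq D(\cL^{\gamma,b})$. The paper supplies this by noting that $\otimes_{i=1}^d C^2_\diamond([0,M])$ is dense in $C^2_\diamond(Q^d)$ for the $C^2$-norm, which dominates the graph norm of $\cL^{\gamma,b}$; hence every $u\in C^2_\diamond(Q^d)$ is a graph-norm limit of elements of the core and so lies in $D(\cL^{\gamma,b})$. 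Adding this sentence completes your argument.
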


\begin{proof} By Proposition \ref{p.1-core} the space $C^2_\diamond ([0,M])$ is  a core for the one--dimensional operator $(L^{\gamma_i, b_i},D(L^{\gamma_i, b_i}))$ for every $i=1,\dots,d$. So,  $\otimes_{i=1}^d C^2_\diamond ([0,M])$ is a core for the operator $(\cL^{\gamma,b},D(\cL^{\gamma,b}))$.
On the other hand, it is known that $\otimes_{i=1}^d C^2_\diamond ([0,M])$ is dense in $C_\diamond^2(Q^d)$ with respect to the $C^2$-norm which is clearly stronger than the graph-norm of $\cL^{\gamma,b}$. So, 
it follows that $C_\diamond^2(Q^d)$ is a subspace of the domain of the closure of $\cL^{\gamma,b}$ and is dense therein with respect to the graph norm. 
\end{proof}

%this space turns to be a core for the operator $(\cL^{\gamma, b}, D(\cL^{\gamma, b}))$. 
%Indeed, by Proposition \ref{p.1-core} the space $C^2_\diamond ([0,M])$ is  a core for the one--dimensional operator $(L^{\gamma_i, b_i},D(L^{\gamma_i, b_i}))$ for every $i=1,\dots,d$. So,  $\otimes_{i=1}^d C^2_\diamond ([0,M])$ is a core for the operator $(\cL^{\gamma,b},D(\cL^{\gamma,b}))$.
%On the other hand, it is known that $\otimes_{i=1}^d C^2_\diamond ([0,M])$ is dense in $C_\diamond^2(Q^d)$ with respect to the $C^2$-norm, that is clearly stronger than the graph-norm of $\cL^{\gamma,b}$.
%It follows that $C_\diamond^2(Q^d)$ is a subspace of the domain of the closure of $\cL^{\gamma,b}$ and is dense therein with respect to the graph norm. 

We now prove  that  the operator $(\cL^{\gamma,b},D(\cL^{\gamma,b}))$ also shares similar gradient estimates with  the analogous one--dimensional operator.

\begin{prop}\label{p.dimd-grad}   Let $B>0$ and $\gamma=(\gamma_1,\gamma_2,\ldots,\gamma_d)\in (C([0,M]))^d$, with each $\gamma_i$  strictly positive. Then, for every $b\in [0,B]^d$,  the following properties hold.
\begin{enumerate}
\item There exist $K, \alpha, \overline t>0$ depending on $B$ and on $\gamma$ such that, for every $u\in C(Q^d)$ and $i=1,\ldots,d$, we have
\begin{eqnarray}
\label{eq:s1} & & ||t\cL^{\gamma,b} T(t)|| \leq Ke^{\alpha t},\quad t\geq 0.\\
\label{eq:s2} & &||\sqrt{x_i}\partial_{x_i}(T(t)u)||_\infty\leq \frac{K e^{\alpha t}}{\sqrt t}||u||_\infty,        \quad 0< t<\overline t.\\
\label{eq:s3} & &||\sqrt{x_i}\partial_{x_i}(T(t)u)||_\infty\leq K e^{\alpha t}||u||_\infty, \qquad  t\geq \overline t.
\end{eqnarray}
Moreover, for every $i\in \{1,\ldots,d\}$ and  $u\in C(Q^d)$, $\sqrt{x_i}\partial_{x_i}(T(t)u)\in C(Q^d)$ and 
\begin{equation}\label{limsem}
 \lim_{x_i\to 0^+} \sup_{x_j\in [0,M], j\in \{1,\ldots,d\}\setminus\{i\}}\sqrt{x_i}\partial_{x_i}(T(t)u)=0.
\end{equation}
\item There exist  $d_1, d_2, R>0$ depending on $B$ and on $\gamma$  such that, for every $\lambda\in \C$ with  {\rm Re}$\lambda>R$ , $u\in C(Q^d)$ and $i=1,\ldots,d$, we have
\begin{eqnarray}
\label{eq:r1} & &||R(\lambda,\cL^{\gamma,b}) u||_\infty \leq d_1 \frac{||u||_\infty}{|\lambda|},\\
\label{eq:r2} & & ||\sqrt{x_i}\partial_{x_i}(R(\lambda,\cL^{\gamma,b}) u)||_\infty \leq d_2 \frac{||u||_\infty}{\sqrt{|\lambda|}}.
 \end{eqnarray}
Moreover, for every $i\in \{1,\ldots,d\}$  and $u\in C(Q^d)$,  $\sqrt{x_i}\partial_{x_i}(R(\lambda,\cL^{\gamma,b}) u)\in C(Q^d)$ and 
\begin{equation}\label{limres} 
\lim_{x_i\to 0^+} \sup_{x_j\in [0,M],j\in \{1,\ldots,d\}\setminus\{i\}}\sqrt{x_i}\partial_{x_i}(R(\lambda,\cL^{\gamma,b}) u)(x)=0.
\end{equation}
\item There exist  $C, D, \overline \varepsilon>0$  depending on $B$ and on $\gamma$ such that,  for every $0<\varepsilon<\overline\varepsilon$,  $i=1,\ldots, d$  and    
$u\in D(\cL^{\gamma,b})$, we have 
\[ 
\|\sqrt{x_i} \partial_{x_i}u\|_\infty\leq \frac{C} {\varepsilon}  \|u\|_\infty+ D\varepsilon \|\cL^{\gamma,b}u\|_\infty.
 \]
\end{enumerate}
\end{prop}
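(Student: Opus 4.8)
The plan is to lift the one-dimensional estimates of Proposition~\ref{p.1-gradiente} and Corollary~\ref{c.1-gradiente} to the $d$-dimensional tensor product operator, exploiting the fact that $(T(t))_{t\geq 0}=\hat\otimes_{\epsilon,i=1}^d T_i(t)$ and that each factor $(L^{\gamma_i,b_i},D(L^{\gamma_i,b_i}))$ already satisfies, with constants depending only on $B$ and $\gamma$, the required resolvent and semigroup bounds. I would prove the three items in the order (1), (2), (3), since the gradient estimate for the resolvent in (2) is most naturally obtained from the semigroup estimate in (1), and (3) is then a purely algebraic consequence of (2), exactly as Corollary~\ref{c.1-gradiente} follows from Proposition~\ref{p.1-gradiente}.

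For part (1), I would first observe that $\|t\cL^{\gamma,b}T(t)\|\le Ke^{\alpha t}$ follows from the sectoriality of $\cL^{\gamma,b}$: since $(T(t))_{t\ge0}$ is a bounded analytic $C_0$-semigroup of angle $\pi/2$, the operator $\cL^{\gamma,b}-\alpha$ is sectorial for a suitable $\alpha$, and \cite[Proposition 2.1.1]{L} gives the bound (one could also derive it from the tensor representation $\cL^{\gamma,b}T(t)=\sum_i (L^{\gamma_i,b_i}T_i(t))\otimes(\otimes_{j\ne i}T_j(t))$, but the constants in the one-dimensional factors already depend only on $B$ and $\gamma$, so uniformity is preserved either way). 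For the crucial estimate \eqref{eq:s2}, fix $i$ and write, for $u\in C(Q^d)$,
\[
\sqrt{x_i}\,\partial_{x_i}(T(t)u) = \sqrt{x_i}\,\partial_{x_i}\Bigl(T_i(t)\otimes\bigl(\textstyle\otimes_{j\ne i}T_j(t)\bigr)\Bigr)u .
\]
Because the injective tensor norm on $C(Q^d)$ coincides with the sup norm and the factors $T_j(t)$, $j\ne i$, are contractions acting in the variables $x_j$, one reduces, for each fixed choice of the remaining variables, to the one-dimensional estimate $\|\sqrt{x}(T_i(t)v)'\|_\infty\le Ke^{\alpha t}t^{-1/2}\|v\|_\infty$ from Proposition~\ref{p.1-gradiente}(2), applied to the partial function $v=v(x_i)$. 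The continuity of $\sqrt{x_i}\,\partial_{x_i}(T(t)u)$ on $Q^d$ and the vanishing \eqref{limsem} likewise follow from the one-dimensional statement $\lim_{x\to0^+}\sqrt{x}(T_i(t)v)'(x)=0$, which is \emph{uniform} in $v$ in the unit ball of $C([0,M])$ (this uniformity is exactly what lets one pass the limit under the supremum over $x_j$); here I would invoke the approximation of $u$ by elementary tensors in $\otimes_{i=1}^dD(L^{\gamma_i,b_i})$, which is dense by Proposition~\ref{p.-dcore}, to justify the interchange, and then use density to extend to all $u\in C(Q^d)$.

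For part (2), I would use the standard formula $R(\lambda,\cL^{\gamma,b})u=\int_0^\infty e^{-\lambda t}T(t)u\,dt$, valid for $\mathrm{Re}\,\lambda>\alpha$. Then \eqref{eq:r1} is immediate from $\|T(t)\|\le1$, and \eqref{eq:r2} follows by moving $\sqrt{x_i}\,\partial_{x_i}$ inside the integral (legitimate by the uniform-in-$t$ bound \eqref{eq:s2}--\eqref{eq:s3} and dominated convergence) and estimating
\[
\bigl\|\sqrt{x_i}\,\partial_{x_i}(R(\lambda,\cL^{\gamma,b})u)\bigr\|_\infty
\le \int_0^{\overline t} e^{-\mathrm{Re}\lambda\, t}\frac{Ke^{\alpha t}}{\sqrt t}\,dt\,\|u\|_\infty
+\int_{\overline t}^\infty e^{-\mathrm{Re}\lambda\, t}Ke^{\alpha t}\,dt\,\|u\|_\infty ,
\]
which for $\mathrm{Re}\,\lambda>R$ large enough is bounded by $d_2|\lambda|^{-1/2}\|u\|_\infty$ (the first integral contributes the $|\lambda|^{-1/2}$ scaling after the substitution $t\mapsto t/\mathrm{Re}\,\lambda$, the second is exponentially small). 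The membership $\sqrt{x_i}\,\partial_{x_i}(R(\lambda,\cL^{\gamma,b})u)\in C(Q^d)$ and \eqref{limres} come by passing these limits through the integral, using \eqref{limsem}. Finally, part (3) is obtained verbatim as in Corollary~\ref{c.1-gradiente}: given $u\in D(\cL^{\gamma,b})$ and $\lambda$ with $\mathrm{Re}\,\lambda>R$, write $u=R(\lambda,\cL^{\gamma,b})v$ with $v=\lambda u-\cL^{\gamma,b}u$, apply \eqref{eq:r2} to get $\|\sqrt{x_i}\,\partial_{x_i}u\|_\infty\le d_2|\lambda|^{-1/2}(|\lambda|\|u\|_\infty+\|\cL^{\gamma,b}u\|_\infty)$, and choose $|\lambda|^{1/2}=1/\varepsilon$ with $\overline\varepsilon=R^{-1/2}$.

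The main obstacle I anticipate is the rigorous justification of the reduction to one dimension in part (1): one must argue carefully that for $u$ in the injective tensor product $C(Q^d)=\hat\otimes_{d,\epsilon}C([0,M])$ the operator $\sqrt{x_i}\,\partial_{x_i}T(t)$ acts "coordinatewise" in the sense needed, i.e.\ that $\sup_{x_j}$ of the $i$th partial derivative is controlled by the one-dimensional norm of $T_i(t)$, and that the limit relation \eqref{limsem} survives the supremum over the other variables. This is handled by first treating elementary tensors $u=\otimes_i u_i$, where everything is explicit and the estimate is literally the one-dimensional one times $\prod_{j\ne i}\|u_j\|_\infty$, then extending by linearity and density (Proposition~\ref{p.-dcore}) together with the uniform bounds \eqref{eq:s2}--\eqref{eq:s3}, which guarantee that the extension is continuous in $u$ for the sup norm; the uniformity of the one-dimensional limit $\lim_{x\to0^+}\sqrt{x}(T_i(t)v)'(x)=0$ over bounded sets of $v$ is the key input that makes \eqref{limsem} pass to the limit.
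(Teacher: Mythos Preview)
Your overall strategy coincides with the paper's: lift part~(1) from Proposition~\ref{p.1-gradiente} via the tensor product structure of $(T(t))_{t\ge0}$, deduce part~(2) from~(1) through the Laplace representation of the resolvent, and obtain part~(3) from~(2) exactly as in Corollary~\ref{c.1-gradiente}. The paper phrases~(1) as a direct operator identity, $\sqrt{x_i}\partial_{x_i}T(t)=T_1(t)\hat\otimes_\epsilon\cdots\hat\otimes_\epsilon(\sqrt{x_i}\partial_{x_i}T_i(t))\hat\otimes_\epsilon\cdots\hat\otimes_\epsilon T_d(t)$, and uses the tensor decomposition $t\cL^{\gamma,b}T(t)=\sum_i tL^{\gamma_i,b_i}T_i(t)\otimes(\otimes_{j\ne i}T_j(t))$ for \eqref{eq:s1}, which is precisely your parenthetical alternative and is the route that transparently gives constants depending only on $B$ and $\gamma$; your primary suggestion of invoking abstract sectoriality does not by itself track the uniformity in $b\in[0,B]^d$.

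There is, however, a concrete gap in your argument for part~(2). The Laplace integral $R(\lambda,\cL^{\gamma,b})u=\int_0^\infty e^{-\lambda t}T(t)u\,dt$ together with $\|T(t)\|\le1$ yields only $\|R(\lambda,\cL^{\gamma,b})\|\le 1/\mathrm{Re}\,\lambda$, not $d_1/|\lambda|$, and your displayed estimate for the gradient integral is likewise controlled by $C/\sqrt{\mathrm{Re}\,\lambda}$, not by $d_2/\sqrt{|\lambda|}$. These are genuinely different on the half-plane $\mathrm{Re}\,\lambda>R$: take $\lambda=R+1+iN$ with $N\to\infty$. The paper obtains \eqref{eq:r1} by feeding the analytic-semigroup estimate \eqref{eq:s1} into \cite[Proposition~2.1.1]{L}, which produces the sectorial bound $\|R(\lambda,\cL^{\gamma,b})\|\le d_1/|\lambda-\alpha|$, and then shifts the half-plane. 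For \eqref{eq:r2} the paper refers to \cite[Proposition~2.1(3)]{AM-2}; a self-contained repair of your argument is to first prove the gradient bound for \emph{real} $\lambda>R$ via your integral, and then for complex $\mu$ with $\mathrm{Re}\,\mu>R$ write $R(\mu,\cL^{\gamma,b})=R(|\mu|,\cL^{\gamma,b})\bigl[I+(|\mu|-\mu)R(\mu,\cL^{\gamma,b})\bigr]$ and combine the real-$\lambda$ gradient bound with the already-established \eqref{eq:r1} to get $\|\sqrt{x_i}\partial_{x_i}R(\mu,\cL^{\gamma,b})u\|_\infty\le C(1+2d_1)|\mu|^{-1/2}\|u\|_\infty$.
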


\begin{proof} 
(1)  By Proposition \ref{p.1-gradiente}(2) there exists $\ov{t},K, \alpha >0$ depending on $B$ and $\gamma_i$ such that the  operators  $\sqrt{x_i}\partial_{x_i} T_i(t)$, for $i=1,\ldots,d$,   are   bounded  on $C([0,M])$  with norm less or equal to ${K}e^{\alpha t}/\sqrt{t}$ if  $0<t< \ov{t}$ and to $Ke^{\alpha t}$ if $t\geq \ov{t}$. Then the  operators  
\[
\sqrt{x_i}\partial_{x_i}T(t)=T_1(t)\widehat\otimes_\varepsilon\ldots \widehat\otimes_\varepsilon(\sqrt{x_i}\partial_{x_i }T_i(t))\widehat\otimes_\varepsilon \ldots \widehat\otimes_\varepsilon T_d(t),\quad i=1,\ldots,d,
\] 
are  also  bounded  on $C(Q^d)$ with norm less or equal to $Ke^{\alpha t}/\sqrt{t}$ if $0<t<\ov{t}$ or to $Ke^{\alpha t}$ if $t\geq \ov{t}$, \cite{J} (see also \cite[Appendix A]{CC1} or \cite[\S 2.2]{AM-2}). So, inequalities \eqref{eq:s2} and \eqref{eq:s3} are satisfied.
In particular, for every $u\in C(Q^d)$ and $i=1,\ldots,d$ we have $\sqrt{x_i}\partial_{x_i}T(t)u\in C(Q^d)$. 

Moreover, again by Proposition \ref{p.1-gradiente}(2) we have that $\|tL^{\gamma_i,b_i}T_i(t)\|\leq Ke^{\alpha t}$ for every $t\geq 0$. Then, via \eqref{eq:tensor} the linear operators
\[
tL^{\gamma,b}T(t)=\sum_{i=1}^d tL^{\gamma_i,b_i}T_i(t)\otimes (\otimes_{j\not=i}T_j(t))
\]
are bounded on $\otimes_{i=1}^dC([0,M])$ with norm  less or equal to $dKe^{\alpha t}$ for every $t\geq 0$. So, \eqref{eq:s1} is satisfied on $\otimes_{i=1}^dC([0,M])$. By the density of $\otimes_{i=1}^dC([0,M])$ in $C(Q^d)$  and the continuity of the linear operators $tL^{\gamma,b}T(t)$ in $C(Q^d)$ (recall that $(T(t))_{t\geq 0}$ is an analytic $C_0$--semigroup in $C(Q^d)$) it follows that \eqref{eq:s1} is satisfied.

Finally, if $u\in \otimes_{i=1}^dC([0,M])$,  then \eqref{limsem} is clearly satisfied by  Proposition \ref{p.1-gradiente}(2). The density of $\otimes_{i=1}^dC([0,M])$ in $C(Q^d)$  and the continuity of the linear operators $\sqrt{x_i}\partial_{x_i}T(t)$ in $C(Q^d)$ imply that \eqref{limsem} 
is valid for every $u\in C(Q^d)$. 

(2) By \cite[Proposition 2.1.1]{L} and \eqref{eq:s1}  there exists $d_1=d_1(B, \gamma)>0$ such that, for every $\lambda\in\C$, with ${\rm Re}\lambda>\alpha$ and $u\in C(Q^d)$, we have
\[
 ||R(\lambda,\cL^{\gamma,b}) u||_\infty \leq d_1 \frac{||u||_\infty}{|\lambda-\alpha|}.
\]
It follows, for every  $\lambda\in\C$ with  ${\rm Re}\lambda>2\alpha$  and $u\in C(Q^d)$, that   
\[ 
||R(\lambda,\cL^{\gamma,b}) u||_\infty \leq 2d_1 \frac{||u||_\infty}{|\lambda|}, 
\]
i.e., \eqref{eq:r1} is satisfied.
 
Now,  by \eqref{eq:s2} and \eqref{eq:s3} we can derivate under the sign of integral and so we obtain, for every $\eta> \alpha$, $u\in C(Q^d)$, $x\in Q^d$ and $i=1,\ldots,d$, that
\[
{\sqrt{x_i}}\partial_{x_i} \left(\int_0^{+\infty} e^{-\eta t} T(t)udt\right)=\int_0^\infty e^{-\eta t} {\sqrt{x_i}} \partial_{x_i}(T(t) u)dt, 
 \]
 and hence, that
\begin{equation}\label{eq:r4}
\sqrt{x_i}\partial_{x_i}( R(\lambda,\cL^{\gamma,b})u)= \int_0^\infty e^{-\eta t} {\sqrt{x_i}} \partial_{x_i}(T(t) u)dt.
\end{equation} 
By \eqref{eq:s2} and \eqref{eq:s3}  it follows from \eqref{eq:r4} that $\sqrt{x_i}\partial_{x_i}(R(\lambda,\cL^{\gamma,b}) u)\in C(Q^d)$. Moreover,  by applying Lebesgue domination theorem and \eqref{limsem} in  \eqref{eq:r4} we deduce  that
\eqref{limres} is valid.

The proof of \eqref{eq:r2} follows,  with minor changes, as in the proof of \cite[Propositions 2.1(3)]{AM-2}.

(3) It follows as in the proof of Corollary \ref{c.1-gradiente}.
\end{proof}

%%%%%%%%%%%%%%%%%%%%%%%%%%%%%%%%%%%%%%%%%%%%%%%%%%%%%%%%%%%%%%%%%%%%%%%%%%%%%%%%%%%%%%%%%%%%%%%%%%%%%%%%%%%%%%%

\section{Main Results}

We are here first concerned with  the following $d$--dimensional  second order elliptic differential operator
\begin{equation}\label{e.opergen-d}
L=\Gamma(x)\sum_{i=1}^d[\gamma_i(x_i)x_i\partial^2_{x_i}+b_i(x)\partial_{x_i}],\quad x\in Q^d=[0,M]^d,
\end{equation}
where $M>0$,   $\Gamma$, $b_i$ and $\gamma_i$, for $i=1,\ldots,d$, are continuous functions on $Q^d$ and on $[0,M]$ respectively. We assume that 

\begin{hypotheses}\label{hypo}The coefficients $\Gamma$, $\gamma_i$ and $b_i$, for $i=1,\ldots,d$, are continuous functions satisfying the following conditions.
\begin{itemize}
\item[\rm (i)] The functions  $\Gamma$ and $\gamma_i$, for $i=1,\ldots,d$, are strictly positive on $Q^d$ and on $[0,M]$ respectively. 
\item[\rm (ii)]  Let $b(x)=(b_1(x),\ldots, b_d(x))$ for $x\in Q^d$. Then $\langle b(x),\nu(x)\rangle \geq 0$ for every $x\in \partial Q^d_0$, where $\partial Q^d_0=\cup_{i=1}^d\{x\in Q^d:\  x_i=0 \}$ and $\nu$ denotes the unit inward normal at $\partial Q^d$.
\end{itemize}
\end{hypotheses}

In the sequel we denote by $L_0$ the operator defined according  to \eqref{e.opergen-d} with $b(x)=0$ and $\Gamma(x)=1$ for $x\in Q^d$. Moreover, for each $i=1,\ldots, d$, we
denote by $S_i\colon C(Q^d)\to C(\partial(Q^d)_i)$ (by $S^i\colon C(Q^d)\to C(\partial(Q^d)^i)$, resp.) the usual restriction map defined by setting $(S_iu)(x)=u(x)$ for $x\in\partial(Q^d)_i$ ($(S^iu)(x)=u(x)$ for $x\in\partial(Q^d)^i$, resp.), where the sets $\partial(Q^d)_i$ and $\partial(Q^d)^i$ are defined as in   \S 3. It is easy to verify that $S_i(C^k(Q^d))\subseteq C^k(\partial(Q^d_0)_i)$  (that $S^i(C^k(Q^d))\subseteq C^k(\partial(Q^d)^i)$, resp.) for every $k\in\N_0$.

\begin{lemma}\label{l.posmin}  Suppose  Hypotheses \ref{hypo} are fulfilled. If  $u\in C^2_{\diamond}(Q^d)$ and $x_0\in Q^d$ is a point where $u$ achieves its minimum, then 
\begin{equation}\label{e.lem-aus}
L_0u(x_0)\geq 0,\quad  \langle b(x_0),\nabla u(x_0)\rangle \geq 0.
\end{equation}
\end{lemma}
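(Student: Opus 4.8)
The plan is to argue componentwise at the minimum point $x_0=((x_0)_1,\dots,(x_0)_d)$, treating separately the interior and boundary behaviour in each coordinate direction. Since $u\in C^2_\diamond(Q^d)$, the function is $C^2$ up to the boundary, so all the first and second partials make sense classically at $x_0$.

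\medskip

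First I would establish the inequality $\langle b(x_0),\nabla u(x_0)\rangle\ge 0$. For a fixed index $i$, consider the one--variable function $t\mapsto u((x_0)_1,\dots,t,\dots,(x_0)_d)$ on $[0,M]$, which attains its minimum at $t=(x_0)_i$. If $(x_0)_i\in\,]0,M[$, then $\partial_{x_i}u(x_0)=0$, so that coordinate contributes nothing to the sum. If $(x_0)_i=M$, then the constraint $u\in C^2_\diamond(Q^d)$ forces $\partial_{x_i}u(x_0)=0$ as well, so again no contribution. The only remaining case is $(x_0)_i=0$, i.e.\ $x_0\in\partial(Q^d)_i$; there the minimality of $t\mapsto u(\cdot,t,\cdot)$ at the left endpoint gives $\partial_{x_i}u(x_0)\ge 0$. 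Hence $\nabla u(x_0)$ has nonnegative $i$--th component exactly on the faces $x_i=0$ and vanishing components otherwise; combining this with Hypotheses \ref{hypo}(ii), which says $\langle b(x_0),\nu(x_0)\rangle\ge 0$ on $\partial Q^d_0$ (and $\nu$ points inward, so its nonzero components are the $+e_i$ directions on the faces $x_i=0$), one gets $\langle b(x_0),\nabla u(x_0)\rangle=\sum_{i:\,(x_0)_i=0} b_i(x_0)\,\partial_{x_i}u(x_0)\ge 0$, since on each such face $b_i(x_0)$ coincides with the relevant component of $\langle b(x_0),\nu(x_0)\rangle$ and is therefore $\ge 0$. (If $x_0\notin\partial Q^d_0$ the sum is empty and the inequality is trivial.)

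\medskip

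Next I would show $L_0u(x_0)=\sum_{i=1}^d\gamma_i((x_0)_i)(x_0)_i\,\partial^2_{x_i}u(x_0)\ge 0$. Again fix $i$ and look at $t\mapsto u(\cdot,t,\cdot)$. If $(x_0)_i>0$, then the coefficient $\gamma_i((x_0)_i)(x_0)_i$ is strictly positive by Hypotheses \ref{hypo}(i), and since $t=(x_0)_i$ is an interior-or-right-endpoint minimiser of a $C^2$ function on $[0,M]$ we have $\partial^2_{x_i}u(x_0)\ge 0$ (at the right endpoint this still holds because $\partial_{x_i}u(x_0)=0$ and $u$ is $C^2$, so a negative second derivative would make $t=M$ a strict local max along that edge, contradicting minimality). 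If $(x_0)_i=0$, then the coefficient $\gamma_i(0)\cdot 0=0$, so that term vanishes regardless of the sign of $\partial^2_{x_i}u(x_0)$. In every case the $i$--th summand is $\ge 0$, so $L_0u(x_0)\ge 0$.

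\medskip

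The main subtlety — and the only place that requires genuine care rather than routine first-year calculus — is the bookkeeping on the faces $x_i=0$: one must be sure that the sign condition in Hypotheses \ref{hypo}(ii), phrased via the inner normal $\nu$ at $\partial Q^d$, really does translate into $b_i(x_0)\ge 0$ for each coordinate $i$ with $(x_0)_i=0$. Since $Q^d$ is a product of intervals, on the (relative interior of the) face $\{x_i=0\}$ the inward unit normal is exactly $e_i$, giving $\langle b(x_0),\nu(x_0)\rangle=b_i(x_0)$, and on edges/corners where several coordinates vanish the inward normal cone is generated by the corresponding $e_i$'s, so $b_i(x_0)\ge 0$ for each such $i$ (test against each generator). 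Once this identification is in hand, both inequalities in \eqref{e.lem-aus} follow by summing the per-coordinate estimates above, which completes the proof. $\Box$
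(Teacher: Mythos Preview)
Your proof is correct, but it follows a different route from the paper's. The paper argues by induction on the dimension $d$: the base case $d=1$ is handled by direct inspection of the three possibilities $x_0\in(0,M)$, $x_0=0$, $x_0=M$, and the inductive step picks a face $\partial(Q^d)_i$ or $\partial(Q^d)^i$ containing $x_0$, restricts $u$ to that face via the trace $S_i u$ (resp.\ $S^i u$), and applies the $(d-1)$--dimensional hypothesis to the restricted operator. Your argument instead works coordinate by coordinate, analysing for each $i$ the one--variable section $t\mapsto u((x_0)_1,\dots,t,\dots,(x_0)_d)$ and classifying according to whether $(x_0)_i$ lies in $]0,M[$, equals $M$, or equals $0$. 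Both approaches ultimately rest on the same one--dimensional observations (vanishing of $\partial_{x_i}u$ in the interior and at $x_i=M$ by the $C^2_\diamond$ condition, nonnegativity of $\partial_{x_i}u$ at $x_i=0$, nonnegativity of $\partial^2_{x_i}u$ when $(x_0)_i>0$). Your direct argument is shorter and exploits the product structure of $Q^d$ explicitly, while the paper's inductive framework is slightly heavier but more structural and would transplant more readily to situations where the domain is not a product. The one point you rightly flag as needing care --- reading Hypothesis~\ref{hypo}(ii) at corners as $b_i(x_0)\ge 0$ for every $i$ with $(x_0)_i=0$ --- is exactly how the paper uses the condition as well.
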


\begin{proof} 
To prove \eqref{e.lem-aus} we proceed by induction on the space dimension $d$. Suppose $d=1$. If  $x_0\in (0,M)$, then  \eqref{e.lem-aus}  is clearly satisfied. Otherwise, if $x_0=0$ then we have $L_0u(0)=0$. Moreover, as  $u'(0)$ is necessarily non negative and $b(0)\geq 0$ by Hypothesis \ref{hypo}(ii), we have $b(0)u'(0)\geq 0$. Finally, if $x_0=M$ then $u'(M)=0$. This implies that $u''(M)\geq 0$. Otherwise, $u'(M)=0$ and $u''(M)<0$ ensures that $u'>0$ in $(1-r,1)$ for some $r>0$ and hence, $x_0=M$ cannot be a point of minimum. So, also in this case we have $L_0u(M)\geq 0$ and $b(M)u'(M)=0$.

Next, suppose that \eqref{e.lem-aus} holds for some $d-1\geq 1$. If $x_0\in \stackrel{\circ}{Q^d}$, then     \eqref{e.lem-aus} is clearly satisfied. Otherwise, if $x_0\in \partial Q^d$, then either $x_{0,i}=0$ or $x_{0,i}=M$ for some $i\in\{1,\ldots, d\}$.

Suppose $x_0\in \partial(Q^d)_i$. By Hypothesis \ref{hypo}(ii) we have $b_i(x)\geq 0$ for $x\in \partial(Q^d)_i$. On the other hand, as $x_0\in \partial(Q^d)_i$ is a minimum point for $u$,  $t_0=0$ is a minimum point for the function $\varphi\colon [0,M]\to\R$ defined by $\varphi(t):=u(x_0+te_i)$ for $t\in [0,M]$ and hence, $\partial_{x_i}u(x_0)=\varphi'(0)\geq 0$. So,  $b_i(x_0)\partial_{x_i}u(x_0)\geq 0$.  This implies that
\begin{eqnarray}\label{eq.drift}
\langle b(x_0), \nabla u(x_0)\rangle &=&b_i(x_0)\partial_{x_i}u(x_0)+\sum_{j=1,j\not=i}^d b_j(x_0)\partial_{x_j}u(x_0)\nonumber\\
&\geq &\sum_{j=1,j\not=i}^d b_j(x_0)\partial_{x_j}u(x_0).
\end{eqnarray}
On the other hand, if we set $b^i=(b_j)_{j=1,j\not=i}^d$ then we have
\[
\sum_{j=1,j\not=i}^d b_j(x)\partial_{x_j}u(x)=\sum_{j=1,j\not=i}^d b_j(x)\partial_{x_j}(S_iu)(x)=\langle b^i(x), \nabla (S_iu)(x)\rangle, \quad x\in   \partial(Q^d)_i.
\] 
This, together with \eqref{eq.drift}, implies that 
\begin{equation}\label{e.cond-2}
\langle b(x_0), \nabla u(x_0)\rangle \geq \langle b^i(x_0), \nabla (S_iu)(x_0)\rangle.
\end{equation}
Moreover,  for every $x\in \partial (Q^d)_i$ we have
\begin{equation}\label{e.cond3}
L_0u(x)=\sum_{j=1,j\not=i}^d\gamma_j(x)x_j\partial_{x_j}^2u(x)=(L_0)|_{\partial (Q^d)_i}(S_iu)(x).
\end{equation}
Next, we observe that $S_iu\in C_{\diamond}^2(\partial (Q^d)_i)$. Also, the operator $(L_0)|_{\partial (Q^d)_i}$ and the vector $b^i$ satisfy the same hypotheses of this lemma on $\partial (Q^d)_i$ and $x_0$ is a minimum point for $S_iu$ on $\partial (Q^d)_i$. So,  we may apply the inductive hypothesis to conclude that
\[
(L_0)|_{\partial (Q^d)_i}(S_iu)(x_0)\geq 0,\quad \langle b^i(x_0), \nabla  (S_iu)(x_0)\rangle\geq 0.
\]
By \eqref{e.cond-2} and \eqref{e.cond3} this implies that 
\[
L_0u(x_0)\geq 0, \quad \langle b(x_0), \nabla u(x_0)\rangle \geq 0,
\]
and hence, the proof of the first part is complete.

Suppose that $x_0\in \partial(Q^d)^i$. Then $\partial_{x_i}u(x_0)=0$ and hence, $\partial_{x_i}^2u(x_0)\geq 0$ as it is easy to prove argumenting as above when $d=1$.  It follows that 
\begin{equation}\label{e.cond-4}
\langle b(x_0),\nabla u(x_0)\rangle =\sum_{j=1, j\not =i}^db_j(x_0)\partial_{x_j}u(x_0)=\langle b^i(x_0),\nabla (S^iu)(x_0)\rangle,
\end{equation}
and
\begin{equation}\label{e.cond-5}
L_0u(x_0)=\sum_{j=1}^d \gamma_j(x_0)x_j\partial_{x_j}^2u(x_0)\geq \sum_{j=1,j\not=i}^d \gamma_j(x_0)x_j\partial_{x_j}^2u(x_0)=(L_0)|_{\partial (Q^d)^i}(S^iu)(x_0).
\end{equation}
Now, we observe that $S^iu\in C_{\diamond}^2(\partial (Q^d)^i)$. Also, the operator $(L_0)|_{\partial (Q^d)^i}$ and the vector $b^i$ satisfy the same hypotheses of this lemma on $\partial (Q^d)^i$ and $x_0$ is a minimum point for $S^iu$ on $\partial (Q^d)^i$. So,  we may apply the inductive hypothesis to conclude that
\[
(L_0)|_{\partial (Q^d)^i}(S^iu)(x_0)\geq 0,\quad \langle b^i(x_0), \nabla  (S^iu)(x_0)\rangle\geq 0.
\]
By \eqref{e.cond-4} and \eqref{e.cond-5} this implies that 
\[
L_0u(x_0)\geq 0, \quad \langle b(x_0), \nabla u(x_0)\rangle \geq 0.
\]
Hence, the proof is complete.
\end{proof}

As an immediate consequence of Lemma \ref{l.posmin} we obtain that the operator $(L, C^2_{\diamond}(Q^d))$ satisfies the following \textit{maximum principle}.

\begin{corollary}\label{c.dissoper}  Suppose Hypotheses \ref{hypo}  are fulfilled. If $u\in C^2_{\diamond}(Q^d)$ satisfies the following inequality
\begin{equation}\label{e.posmin}
\lambda u(x)-Lu(x)\geq 0,\quad x\in Q^d,
\end{equation}
for some $\lambda>0$, then $u(x)\geq 0$ for every $x\in Q^d$.

In particular, the operator $(L,  C^2_{\diamond}(Q^d))$ is dissipative and closable in $C(Q^d)$, with dissipative closure.
\end{corollary}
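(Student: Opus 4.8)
The plan is to obtain the maximum principle \eqref{e.posmin}$\,\Rightarrow\,u\geq 0$ as an almost immediate consequence of Lemma \ref{l.posmin}, and then to deduce dissipativity and closability from it by the standard scheme. For the maximum principle, suppose $u\in C^2_\diamond(Q^d)$ satisfies $\lambda u-Lu\geq 0$ on $Q^d$ for some $\lambda>0$. Since $Q^d$ is compact and $u$ is continuous, $u$ attains its minimum at some $x_0\in Q^d$. Writing $Lu=\Gamma\bigl(L_0u+\langle b,\nabla u\rangle\bigr)$ and using $\Gamma>0$ on $Q^d$ (Hypothesis \ref{hypo}(i)) together with the two conclusions $L_0u(x_0)\geq 0$ and $\langle b(x_0),\nabla u(x_0)\rangle\geq 0$ of Lemma \ref{l.posmin}, I get
\[
Lu(x_0)=\Gamma(x_0)\bigl(L_0u(x_0)+\langle b(x_0),\nabla u(x_0)\rangle\bigr)\geq 0 .
\]
Hence $\lambda u(x_0)\geq Lu(x_0)\geq 0$, so $u(x_0)\geq 0$, and since $x_0$ is a minimum point, $u\geq u(x_0)\geq 0$ on all of $Q^d$.

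Next I would establish dissipativity from this. Fix $u\in C^2_\diamond(Q^d)$ and $\lambda>0$, and set $\rho:=\|\lambda u-Lu\|_\infty$. The constant function $\rho/\lambda$ lies in $C^2_\diamond(Q^d)$ and $L(\rho/\lambda)=0$, since $L$ annihilates constants by \eqref{e.opergen-d}; hence both $\rho/\lambda+u$ and $\rho/\lambda-u$ belong to $C^2_\diamond(Q^d)$ and satisfy
\[
\lambda\Bigl(\frac{\rho}{\lambda}\pm u\Bigr)-L\Bigl(\frac{\rho}{\lambda}\pm u\Bigr)=\rho\pm(\lambda u-Lu)\geq 0\qquad\text{on }Q^d .
\]
Applying the maximum principle just proved to each of these two functions yields $\rho/\lambda\pm u\geq 0$ on $Q^d$, that is $\|u\|_\infty\leq\rho/\lambda=\lambda^{-1}\|\lambda u-Lu\|_\infty$, i.e. $\lambda\|u\|_\infty\leq\|\lambda u-Lu\|_\infty$. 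This is precisely the dissipativity of $(L,C^2_\diamond(Q^d))$.

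Finally, for closability: $C^2_\diamond(Q^d)$ is dense in $C(Q^d)$ — this is already contained in Proposition \ref{p.-dcore} applied with $b\equiv 0$ and $\gamma_i\equiv 1$, since a core is in particular dense in the sup norm — so $(L,C^2_\diamond(Q^d))$ is a densely defined dissipative operator, and every such operator on a Banach space is closable with dissipative closure (this is standard; see, e.g., \cite{EN,P}). I do not expect any genuine obstacle here; the only two points that require a moment's care are the factorization $Lu=\Gamma\bigl(L_0u+\langle b,\nabla u\rangle\bigr)$ together with $\Gamma>0$ — which is what permits combining the two separate sign statements of Lemma \ref{l.posmin} into the single inequality $Lu(x_0)\geq 0$ — and the elementary observation that $L$ maps constants to $0$, needed for the two-sided estimate in the dissipativity step.
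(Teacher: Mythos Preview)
Your proof is correct and follows essentially the same route as the paper's. The only cosmetic differences are that the paper argues the minimum principle by contradiction rather than directly, and for dissipativity it evaluates $\mu u-Lu$ at a point where $|u|$ is maximal (using Lemma~\ref{l.posmin} applied to $-u$) instead of your equivalent trick of applying the minimum principle to $\rho/\lambda\pm u$.
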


\begin{proof}
Suppose that $u\in C^2_\diamond(Q^d)$ is such that \eqref{e.posmin}  holds but, $u(\overline{x})<0$ for some $\overline{x}\in Q^d$. Let  $u(x_0)=\min_{x\in Q^d}u(x)$. Then $u(x_0)\leq u(\overline{x})<0$. So, by \eqref{e.lem-aus} we have $L_0u(x_0)\geq 0$ and $\langle b(x_0), \nabla u(x_0)\rangle\geq 0$. This implies that $Lu(x_0)=\Gamma(x_0)[L_0u(x_0)+\langle b(x_0), \nabla u(x_0)\rangle]\geq 0$ and hence,
\[
\lambda u(x_0)-Lu(x_0)\leq \lambda u(x_0)<0;
\]
this is a contradiction. So, the first part of corollary follows.

Fix  $u\in C^2_{\diamond}(Q^d)$. By Lemma \ref{l.posmin} we may assume that $0<u(x_0)=\|u\|_\infty$ for some $x_0\in Q^d$. So, again by Lemma \ref{l.posmin} we have
\[
\|\mu u\|_{\infty}=\mu u(x_0)\leq \mu u(x_0)-Lu(x_0)\leq \|(\mu -L)u\|_\infty
\]
for every $\mu>0$. This means that the operator $(L,C^2_\diamond(Q^d))$ is dissipative. 

On the other hand, $C^2_\diamond(Q^d)$ is dense in $C(Q^d)$.
 As $(L,C^2_\diamond(Q^d))$ is dissipative with dense domain,  it follows that the operator $(L, C^2_\diamond(Q^d))$ is dissipative and closable in $C(Q^d)$, with dissipative closure.
\end{proof}

\begin{remark}\label{r.firstorder}\rm Let $(b_i)_{i=1}^d\su C(Q^d)$ with $b_i(x)=0$ for every $x\in \partial(Q^d)_i$ and $i=1,\ldots, d$. Let $B:=\sum_{i=1}^db_i(x)\partial_{x_i}$ be the first order differential operator with domain $C^1_\diamond(Q^d):=\cap_{i=1}^d\{u\in C^1(Q^d): \forall x\in \partial(Q^d)_i\ \partial_{x_i}u(x)=0\}$. Then $Bu(x_0)=0$ whenever $u\in C^1_\diamond(Q^d)$ and  $x_0\in Q^d$  is a point in which $u$  achieves its minimum. The proof of this fact is along the lines of the one of Lemma \ref{l.posmin} but \textit{simpler}. So, via similar arguments to Corollary \ref{c.dissoper}, we may conclude  that the operator $(B, C^1_\diamond(Q^d))$ is dissipative  and closable in $C(Q^d)$ with dissipative closure.
\end{remark}
%%%%%%%%%%%%%%%%%%%%%%%%%%%%%%%%%%%%%%%%%%%%%%%%%%%%%%%%%%%%%%%%%%%%%%%%%%%%%%%%%%%%%%%%%%%%%%%%%%%%%%%%%%%%%%%%%%%%%%%%%%%%%%%%%%%%%%%%%%%%%%%%%%%%%%%%%%%%%%%%%%%%%%%%%%%%%%%%%%%%%%%%%%%%%%%%%%%%%%
To show the first main result of this paper we need a further hypothesis.

\begin{hypothesis}\label{hypo-2} The coefficients $b_i$, for $i=1,\ldots,d$, satisfy the following condition.
\begin{itemize}
\item[\rm (iii)] There exists $\delta>0$ and $C>0$ such that, for every $i=1,\ldots,d$ and $x,\ x'\in Q^d$ with $x_i<\delta$ and $x_i'=0$, we have
\begin{equation}\label{e.ass-2}
|b_i(x)-b_i(x')|\leq C\sqrt{x_i},
\end{equation}
\end{itemize}
\end{hypothesis}

\begin{remark}\label{r.hypo-2}\rm (a) Hypothesis \ref{hypo-2} implies, for each $i=1,\ldots, d$,  that the function $b_i$ is constant on the face $\partial(Q^d)_i$, namely $b_i(x)=b_i(0)$ for every $x\in \partial(Q^d)_i$.

(b) Under Hypothesis \ref{hypo-2}, the continuity of each $b_i$ on $Q^d$ yields that there exists a new constant $C'>0$ such that
\[
|b_i(x)-b_i(x')|\leq C'\sqrt{x_i},
\]
for every $x,\ x'\in Q^d$ with $x'_i=0$ and $i=1,\ldots, d$.
\end{remark}

\begin{theorem}\label{l.dominio-anal} Suppose that Hypotheses \ref{hypo} and \ref{hypo-2}, with $\Gamma\equiv 1$ on $Q^d$, are valid. Set $b=(b_1(0), b_2(0),\ldots, b_d(0))$, $\gamma=(\gamma_1,\gamma_2, \ldots,\gamma_d)$ and  denote by $(\cL^{\gamma,b}, D(\cL^{\gamma, b}))$ the differential operator with constant drift term defined according to  \S 3. Then the closure $(\cL, D(\cL))$ of the operator $(L,C^2_\diamond(Q^d))$ satisfies the following properties.
\begin{itemize}
\item[\rm (1)] $D(\cL^{\gamma,b})=D(\cL)$.
\item[\rm (2)] $(\cL, D(\cL))$ generates a bounded analytic  $C_0$--semigroup $(T(t))_{t\geq 0}$ of positive contractions in $C(Q^d)$ and of angle $\pi/2$. The semigroup $(T(t))_{t\geq 0}$  is compact. 
\end{itemize}
\end{theorem}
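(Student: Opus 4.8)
The plan is to obtain Theorem \ref{l.dominio-anal} from the constant-drift operator $(\cL^{\gamma,b},D(\cL^{\gamma,b}))$ of \S 3 by a bounded perturbation argument, where the perturbation is the first-order operator $P:=L-\cL^{\gamma,b}=\sum_{i=1}^d (b_i(x)-b_i(0))\partial_{x_i}$. The crucial point is that Hypothesis \ref{hypo-2} (and Remark \ref{r.hypo-2}(b)) gives $|b_i(x)-b_i(0)|\le C'\sqrt{x_i}$, so that for $u\in D(\cL^{\gamma,b})$ we can control $Pu$ by the weighted gradients $\sqrt{x_i}\,\partial_{x_i}u$, which are exactly the quantities estimated in Proposition \ref{p.dimd-grad}. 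Concretely, $\|Pu\|_\infty\le C'\sum_{i=1}^d\|\sqrt{x_i}\,\partial_{x_i}u\|_\infty$, and by Proposition \ref{p.dimd-grad}(3), for every $0<\varepsilon<\overline\varepsilon$,
\[
\|Pu\|_\infty\le C'd\Bigl(\tfrac{C}{\varepsilon}\|u\|_\infty+D\varepsilon\|\cL^{\gamma,b}u\|_\infty\Bigr),
\]
so $P$ is relatively $\cL^{\gamma,b}$-bounded with relative bound $0$.

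The steps I would carry out are: (i) Note that $\otimes_{i=1}^dD(L^{\gamma_i,b_i})\supseteq C^2_\diamond(Q^d)$, hence $L=\cL^{\gamma,b}+P$ holds on $C^2_\diamond(Q^d)$ and also (after taking closures and using that $P$ extends to a bounded operator from $(D(\cL^{\gamma,b}),\|\cdot\|_{\cL^{\gamma,b}})$ to $C(Q^d)$) on all of $D(\cL^{\gamma,b})$; this requires checking that $C^2_\diamond(Q^d)$ is a core for $\cL^{\gamma,b}$, which is exactly Proposition \ref{p.-dcore}. (ii) Apply a perturbation theorem for generators of analytic semigroups (e.g. \cite[Theorem III.2.10]{EN} or the analogous statement in \cite{L}) using the relative-bound-zero estimate above: since $\cL^{\gamma,b}$ generates a bounded analytic $C_0$-semigroup of angle $\pi/2$ and $P$ is $\cL^{\gamma,b}$-bounded with arbitrarily small relative bound, $\cL^{\gamma,b}+P$ with domain $D(\cL^{\gamma,b})$ generates an analytic $C_0$-semigroup; moreover $(\cL^{\gamma,b}+P,D(\cL^{\gamma,b}))$ is closed, so it is the closure $(\cL,D(\cL))$ of $(L,C^2_\diamond(Q^d))$ by step (i), giving both the generation statement and $D(\cL^{\gamma,b})=D(\cL)$, i.e. part (1). (iii) For the angle $\pi/2$ and the resolvent bound with the right sectorial constants, I would not rely on the black-box perturbation theorem alone but instead redo the Neumann-series argument of Remark \ref{r1-laplace}/Proposition \ref{p.1-gradiente}: for $\lambda$ in a sector $|\arg\lambda|<\pi-\eta$ with $|\lambda|$ large, $\|PR(\lambda,\cL^{\gamma,b})\|<1$ by \eqref{eq:r2}, so $R(\lambda,\cL)=R(\lambda,\cL^{\gamma,b})(I-PR(\lambda,\cL^{\gamma,b}))^{-1}$ exists and satisfies $\|R(\lambda,\cL)\|\le C_\eta/|\lambda|$; letting $\eta\to 0$ (the bound \eqref{eq:r2} holds for all $\lambda$ with $\mathrm{Re}\,\lambda>R$, and one checks it persists on sectors of angle $<\pi$) yields angle $\pi/2$. (iv) Positivity and contractivity: by Corollary \ref{c.dissoper} the operator $(L,C^2_\diamond(Q^d))$ is dissipative with dissipative closure, and the maximum principle \eqref{e.posmin} shows $R(\lambda,\cL)$ is positive for $\lambda>0$; together with $\|\mu R(\mu,\cL)\|\le 1$ this gives that $(T(t))_{t\ge0}$ is a semigroup of positive contractions. (v) Compactness: $D(\cL)=D(\cL^{\gamma,b})$ with equivalent graph norms, and $\cL^{\gamma,b}$ has compact resolvent (it is the generator of the compact semigroup $\hat\otimes_\varepsilon T_i(t)$, each $T_i$ compact by Remark \ref{r.compatezza}), hence $\cL$ has compact resolvent; a norm-continuous $C_0$-semigroup with compact resolvent is compact (\cite[Theorem 4.29]{EN}).

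The main obstacle is step (iii): obtaining the *sharp* angle $\pi/2$ rather than merely "some" analyticity angle. The bare perturbation theorems typically shrink the sector, so I expect to need the gradient estimate \eqref{eq:r2} to hold uniformly on sectors $|\arg\lambda|<\pi-\eta$ for all small $\eta>0$ (with constant $d_2=d_2(\eta)$ possibly blowing up as $\eta\to0$ but slower than $\sqrt{|\lambda|}$ for $|\lambda|$ in the relevant range), so that the Neumann series still converges on each such sector once $|\lambda|$ is large enough; tracking this is where care is required, and it parallels the role played by the factor $\cos(\theta/2)$ in Remark \ref{r1-laplace}. A secondary technical point is verifying cleanly that $P$, a priori defined only on $C^1$ functions, extends to a bounded operator $D(\cL^{\gamma,b})\to C(Q^d)$; this follows from \eqref{eq:r2} plus density of $C^2_\diamond(Q^d)$, but must be stated so that the identity $\cL=\cL^{\gamma,b}+P$ on $D(\cL^{\gamma,b})$ is justified rather than merely on the core.
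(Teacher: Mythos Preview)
Your approach is essentially the paper's: write $L=\cL^{\gamma,b}+P$ with $P=\sum_i(b_i(x)-b_i(0))\partial_{x_i}$, use Hypothesis~\ref{hypo-2} and Proposition~\ref{p.dimd-grad}(3) to get relative $\cL^{\gamma,b}$-bound $0$, and conclude by perturbation. The identification $D(\cL)=D(\cL^{\gamma,b})$, positivity via Corollary~\ref{c.dissoper}, and compactness via compact resolvent plus norm continuity are all handled the same way.

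The one genuine difference is your step~(iii). You flag the sharp angle $\pi/2$ as the main obstacle and propose to recover it by a Neumann-series argument on sectors $|\arg\lambda|<\pi-\eta$, which would require extending the gradient estimate \eqref{eq:r2} from the half-plane ${\rm Re}\,\lambda>R$ to arbitrary sectors---extra work not in the paper. The paper sidesteps this entirely by observing (Remark~\ref{r.firstorder}) that the perturbation $P$ itself is \emph{dissipative}: the coefficients $c_i(x)=b_i(x)-b_i(0)$ vanish on $\partial(Q^d)_i$, so $(P,C^1_\diamond(Q^d))$ is dissipative and closable with dissipative closure $(\cB,D(\cB))$. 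One then invokes the dissipative-perturbation theorems \cite[Chap.~III, \S2, Theorems~2.7 and~2.10]{EN} rather than the generic relative-bound theorem, and these preserve both contractivity and the full angle $\pi/2$ without any sector-chasing. This also cleanly resolves your ``secondary technical point'': the extension of $P$ to $D(\cL^{\gamma,b})$ is obtained as the closed operator $\cB$, and the relative-bound estimate shows $D(\cL^{\gamma,b})\subseteq D(\cB)$.
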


\begin{proof} (1) We first show that $D(\cL^{\gamma,b})\su D(\cL)$. The converse inclusion follows after proving part (2) of this theorem.

Fix any  $f\in D(\cL^{\gamma,b})$. By Proposition \ref{p.-dcore} there exists $(f_n)_n\su C^2_\diamond(Q^d)$ such that $f_n\to f$ and $\cL^{\gamma,b}f_n\to \cL^{\gamma,b}f$ in $C(Q^d)$ as $n\to\infty$. 
Now, we observe that by Proposition \ref{p.dimd-grad}(3) we have
\[
\sqrt{x_i}|\partial_{x_i}(f_n(x)-f_m(x))| \leq a\|f_n-f_m\|_\infty+b\|\cL^{\gamma,b}(f_n-f_m)\|_\infty,\ i=1,\ldots, d, \ n,\ m\in\N,
\]
for some positive constants $a,\ b$.  So, for each $i\in\{1,\ldots, d\}$, it follows that $(\sqrt{x_i}\partial_{x_i}f_n)_n$ is a Cauchy sequence in $C(Q^d)$. On the other hand,  Hypothesis \ref{hypo-2} and   Remark \ref{r.hypo-2}(b) yield that
\[
|b_i(x)-b_i(0)||\partial_{x_i}(f_n(x)-f_m(x))|\leq C\sqrt{x_i}|\partial_{x_i}(f_n(x)-f_m(x)|, \ i=1,\ldots, d, \ n,\ m\in\N,
\]
and so $((b_i(x)-b_i(0))\partial_{x_i}f_n)_n$ is also a Cauchy sequence in $C(Q^d)$. Thus, for each $i=1,\ldots, d$, $(b_i(x)-b_i(0))\partial_{x_i}f_n\to g_i$ in $C(Q^d)$ as $n\to\infty$. Since
\begin{equation}\label{eq.teorema-1}
\cL f_n=\cL^{\gamma,b}f_n+\sum_{i=1}^d (b_i(x)-b_i(0))\partial_{x_i}f_n,\quad n\in\N, 
\end{equation}
we deduce that $\cL f_n\to \cL^{\gamma,b}f+\sum_{i=1}^d g_i=:g$ in $C(Q^d)$ as $n\to\infty$. After observing that $(f_n)_n\su C^2_\diamond(Q^d)\su D(\cL)$ with $f_n\to f$ in $C(Q^d)$ as $n\to\infty$ and that $(\cL,D(\cL))$ is a closed operator, we conclude that $f\in D(\cL)$ and $\cL f=g$. So, the proof is complete.

(2) Set $B:=\sum_{i=1}^d (b_i(x)-b_i(0))\partial_{x_i}$. Since the coefficients $c_i(x):=b_i(x)-b_i(0)$, for $i=1,\ldots, d$, satisfy the hypotheses in Remark \ref{r.firstorder}, i.e., $c_i(x)=0$  for $x\in \partial(Q^d)_i$ and $i=1,\ldots, d$, the operator $(B, C^1_\diamond(Q^d))$ is dissipative and closable in $C(Q^d)$ with dissipative closure. 
%Denote by $(\cB, D(\cB))$ the closure of  $(B, C^1_\diamond(Q^d))$ in $C(Q^d)$.
% and $C^1_\diamond(Q^d):=\cap_{i=1}^d\{u\in C^1(Q^d): \forall x \in \partial (Q^d)^i\ \partial_{x_i}u(x)=0\}$. 
We claim that  the closure $(\cB, D(\cB))$ of the operator  $(\cB, C^1_\diamond (Q^d))$ is  $\cL^{\gamma, b}$--bounded with $\cL^{\gamma, b}$--bound $a_0=0$.
% (here, the operator $\cL^{\gamma, b}$ is defined as in part (1) of this theorem). 
To show this claim, we first observe that $C^2_\diamond(Q^d)\su C^1_\diamond(Q^d)$. On the other hand,  by Hypothesis \ref{hypo-2} and   Remark \ref{r.hypo-2}(b) there exists $C'>0$ such that
\[
|b_i(x)-b_i(0)||\partial_{x_i}u(x)|\leq C'\sqrt{x_i}|\partial_{x_i}u(x)|, \ i=1,\ldots, d, \ u\in C^2_\diamond (Q^d).
\]
So, by applying  Proposition \ref{p.dimd-grad}(3) we obtain that, 
 there exist $C'',D', \ov{\ve}>0$ such that,   for every $0<\ve<\ov{\ve}$, we have
\begin{equation}\label{e.bound}
\|Bu\|_\infty\leq D'\ve \|\cL^{\gamma,b}u\|_\infty+\frac{C''}{\ve}\|u\|_\infty,\quad  u\in C^2_\diamond (Q^d).
\end{equation}
Since $C^2_\diamond (Q^d)$ is a core for $(\cL^{\gamma, b},D(\cL^{\gamma, b}))$ (see, Proposition \ref{p.-dcore} or Corollary \ref{c.dissoper}), this inequality remains valid for every  $u\in D(\cL^{\gamma, b})$. 
Indeed, if $u\in  D(\cL^{\gamma, b})$, there exists a sequence $(f_n)_n\su C^2_\diamond(Q^d)$ such that $f_n\to u$ and $\cL^{\gamma, b}f_n\to \cL^{\gamma, b}u$ as $n\to\infty$. So, $(f_n)_n$ and $(\cL^{\gamma, b}f_n)_n$ are Cauchy sequences in $C(Q^d)$. By \eqref{e.bound} it follows that $(Bf_n)_n$ is also a Cauchy sequence in $C(Q^d)$ and so it converges to some $g$ in $C(Q^d)$.  But $C^2_\diamond(Q^d)\subseteq C^1_\diamond (Q^d)$ and  the operator $(\cB, D(\cB))$ is closed and hence, $u\in D(\cB)$ and $\cB u=g$ (we point out that this fact proves also that $D(\cL^{\gamma, b})\su D(\cB)$).  Finally, by replacing $u$ with  $f_n$  in \eqref{e.bound} and passing to the limit for $n\to\infty$, it follows that  \eqref{e.bound} remains valid for such a fixed $u$ in   $D(\cL^{\gamma, b})$. So, for every $0<\ve<\ov{\ve}$, we have
\begin{equation}\label{e.bound-1}
\|\cB u\|_\infty\leq D'\ve \|\cL^{\gamma,b}u\|_\infty+\frac{C''}{\ve}\|u\|_\infty,\quad  u\in D(\cL^{\gamma,b}).
\end{equation}
Inequality \eqref{e.bound-1} ensures  that  the operator $(\cB, D(\cB))$ is  $\cL^{\gamma, b}$--bounded with $\cL^{\gamma, b}$--bound $a_0=0$. Thus, the operator $(\cL^{\gamma, b}+\cB, D(\cL^{\gamma, b}))$ is closed and generates a bounded analytic $C_0$--semigroup $(T(t))_{t\geq 0}$ of  contractions in $C(Q^d)$ and angle $\pi/2$, \cite[Chap. III, \S2, Lemma 2.4, Theorems 2.7 \& 2.10]{EN}. The semigroup $(T(t))_{t\geq 0}$ is also compact as $(T(t))_{t\geq 0}$ is norm--continuous, being it analytic, and the operator $(\cL^{\gamma, b}+\cB, D(\cL^{\gamma, b}))$ has compact resolvent (see the proof of \cite[Chap. III, \S2, Lemma 2.10]{EN}).

Now, we observe that $\cL=\cL^{\gamma, b}+\cB$ and $D(\cL)=D(\cL^{\gamma, b})$. Indeed,  by \eqref{e.bound-1} we obtain, for every  $0<\ve<\ov{\ve}$ and $u\in D(\cL^{\gamma,b})$, that
\begin{equation}\label{e.bound-11}
\|\cL^{\gamma,b}u\|_\infty=\|(\cL^{\gamma,b}+\cB )u-\cB u\|_\infty\leq \|(\cL^{\gamma,b}+\cB )u\|_\infty+\ve D'\|\cL^{\gamma,b}u\|_\infty+\frac{C''}{\ve}\|u\|_\infty.
\end{equation}
Since $D(\cL^{\gamma,b})\su D(\cL)$ by part (1) and hence, $\cL=\cL^{\gamma,b}+\cB$ on $D(\cL^{\gamma,b})$, it follows that
\begin{equation}\label{e.bound-2}
\| \cL^{\gamma,b}u\|_\infty\leq \frac{1}{1-D'\ve_0}\|\cL u\|_\infty+\frac{C''}{\ve_0 (1-D'\ve_0)}\|u\|_\infty, \quad u\in D(\cL^{\gamma,b}),
\end{equation}
after having taken in \eqref{e.bound-11} $\ve_0$ small enough to have $D'\ve_0<1$. Using again the facts that $C^2_\diamond (Q^d)(\su D(\cL^{\gamma,b}))$ is a core for $(\cL, D(\cL))$ by Corollary \ref{c.dissoper} and that the operator $(\cL^{\gamma,b}, D(\cL^{\gamma,b}))$ is closed, we deduce from \eqref{e.bound-2} that $D(\cL)\su D(\cL^{\gamma,b})$. This completes the proof of part (1) of this theorem and ensures that $\cL=\cL^{\gamma, b}+\cB$.

As $D(\cL)=D(\cL^{\gamma, b})$ and hence $\cL=\cL^{\gamma, b}+\cB$,  the proof of part (2) is now complete. We point out  that the positivity of the semigroup $(T(t))_{t\geq 0}$ follows from Corollary \ref{c.dissoper}.
\end{proof}

Thanks to the proof of Theorem \ref{l.dominio-anal} we are able to show that the operator $(\cL,D(\cL))$ shares further properties of the operator $(\cL^{\gamma,b}, D(\cL^{\gamma, b}))$. Precisely, we   prove 
  estimates  for the norm of the resolvent  operators of $(\cL, D(\cL))$ and of their gradient with   constants which don't depend on the function $b$.

\begin{prop}\label{p.ddim-gradiente} Suppose that Hypotheses \ref{hypo} and \ref{hypo-2}, with $\Gamma\equiv 1$ on $Q^d$, are valid and let  $B\in\R$ with $B\geq \max_{i=1}^d\|b_i\|_\infty$, $\gamma:=(\gamma_1,\ldots,\gamma_d)$. Then the closure $(\cL,D(\cL))$ of the operator $(L, C^2_\diamond(Q^d))$ satisfies the following properties.
\begin{itemize}
\item[\rm (1)] There exist $K, \alpha, \overline t>0$ depending on $B$ and on $\gamma$ such that, for every $u\in C(Q^d)$ and $i=1,\ldots,d$, we have
\begin{eqnarray}
& & ||t\cL^{\gamma,b} T(t)|| \leq Ke^{\alpha t},\quad t\geq 0.\\
& &||\sqrt{x_i}\partial_{x_i}(T(t)u)||_\infty\leq \frac{K e^{\alpha t}}{\sqrt t}||u||_\infty,        \quad 0< t<\overline t.\\
& &||\sqrt{x_i}\partial_{x_i}(T(t)u)||_\infty\leq K e^{\alpha t}||u||_\infty, \qquad  t\geq \overline t.
\end{eqnarray}
Moreover, for every $i\in \{1,\ldots,d\}$ and $u\in C(Q^d)$,  $\sqrt{x_i}\partial_{x_i}(T(t)u)\in C(Q^d)$ and 
\begin{equation}\label{limsem-varcoeff}
 \lim_{x_i\to 0^+} \sup_{x_j\in [0,M], j\in \{1,\ldots,d\}\setminus\{i\}}\sqrt{x_i}\partial_{x_i}(T(t)u)=0.
\end{equation}
\item[\rm (2)] There exist  $d_1, d_2, R>0$ depending on $B$ and on $\gamma$  such that, for every $\lambda\in \C$ with  ${\rm Re}\lambda>R$ and for every $f\in C(Q^d)$, we have
\begin{eqnarray}
\label{e.varcoeff} & &||R(\lambda,\cL^{\gamma,b}) u||_\infty \leq d_1 \frac{||u||_\infty}{|\lambda|},\\
\label{e.varcoeff-1} & & ||\sqrt{x_i}\partial_{x_i}(R(\lambda,\cL^{\gamma,b}) u)||_\infty \leq d_2 \frac{||u||_\infty}{\sqrt{|\lambda|}}.
 \end{eqnarray}
Moreover, for every $i\in \{1,\ldots,d\}$ and  $u\in C(Q^d)$, $\sqrt{x_i}\partial_{x_i}(R(\lambda,\cL^{\gamma,b}) u)\in C(Q^d)$ and that
\begin{equation}\label{limres-varcoeff} 
\lim_{x_i\to 0^+} \sup_{x_j\in [0,M],j\in \{1,\ldots,d\}\setminus\{i\}}\sqrt{x_i}\partial_{x_i}(R(\lambda,\cL^{\gamma,b}) u)(x)=0.
\end{equation}
\item[\rm (3)] There exist  $C, D, \overline \varepsilon>0$  depending on $B$ and on $\gamma$ such that,  for every $0<\varepsilon<\overline\varepsilon$,  $i=1,\ldots, d$  and    
$u\in D(\cL^{\gamma,b})$, we have 
\[ 
\|\sqrt{x_i} \partial_{x_i}u\|_\infty\leq \frac{C} {\varepsilon}  \|u\|_\infty+ D\varepsilon \|\cL^{\gamma,b}u\|_\infty.
 \]
\end{itemize}
\end{prop}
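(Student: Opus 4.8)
The plan is to obtain all three parts by transporting the estimates of Proposition \ref{p.dimd-grad} through the bounded perturbation exhibited in the proof of Theorem \ref{l.dominio-anal}. Recall from there that $\cL=\cL^{\gamma,b}+\cB$ on the common domain $D(\cL)=D(\cL^{\gamma,b})$, where $b=(b_1(0),\dots,b_d(0))\in[0,B]^d$ (the inclusion being forced by Hypothesis \ref{hypo}(ii) and the choice of $B$), and that one has the perturbation estimate \eqref{e.bound-1},
\[
\|\cB u\|_\infty\le D'\varepsilon\,\|\cL^{\gamma,b}u\|_\infty+\frac{C''}{\varepsilon}\|u\|_\infty,\qquad u\in D(\cL^{\gamma,b}),\ 0<\varepsilon<\overline{\varepsilon},
\]
together with its consequence \eqref{e.bound-2}, all constants being of the type produced in the proof of Theorem \ref{l.dominio-anal}, hence controlled by $B$ and $\gamma$. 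Proposition \ref{p.dimd-grad} supplies the analogous estimates for $\cL^{\gamma,b}$ with constants depending only on $B$ and $\gamma$, so the whole argument is an exercise in pushing these across $\cB$ while keeping track of constants.

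I would begin with the resolvent estimates of part (2). For ${\rm Re}\,\lambda>R$, with $R$ as in Proposition \ref{p.dimd-grad}(2), I write
\[
\lambda-\cL=\bigl(I-\cB R(\lambda,\cL^{\gamma,b})\bigr)\,(\lambda-\cL^{\gamma,b}),
\]
and estimate $\|\cB R(\lambda,\cL^{\gamma,b})\|$ by feeding $R(\lambda,\cL^{\gamma,b})u$ into \eqref{e.bound-1}, using $\cL^{\gamma,b}R(\lambda,\cL^{\gamma,b})=\lambda R(\lambda,\cL^{\gamma,b})-I$ together with \eqref{eq:r1}; this gives $\|\cB R(\lambda,\cL^{\gamma,b})\|\le D'\varepsilon(d_1+1)+C''d_1/(\varepsilon|\lambda|)$. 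Choosing first $\varepsilon$ small and then $|\lambda|$ large, with both thresholds depending only on $B$ and $\gamma$, makes this $<1/2$, so $I-\cB R(\lambda,\cL^{\gamma,b})$ is boundedly invertible with inverse of norm $\le2$ and $R(\lambda,\cL)=R(\lambda,\cL^{\gamma,b})\bigl(I-\cB R(\lambda,\cL^{\gamma,b})\bigr)^{-1}$. This yields the resolvent bound of part (2) at once; writing $R(\lambda,\cL)u=R(\lambda,\cL^{\gamma,b})v$ with $\|v\|_\infty\le2\|u\|_\infty$ and invoking \eqref{eq:r2} gives the gradient bound; and the continuity of $\sqrt{x_i}\partial_{x_i}(R(\lambda,\cL)u)$ on $Q^d$ together with the boundary limit \eqref{limres-varcoeff} follow from \eqref{limres}, since every element of $D(\cL^{\gamma,b})$ is $R(\lambda,\cL^{\gamma,b})$ applied to something. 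Part (3) is then the analogue of Corollary \ref{c.1-gradiente}: for $u\in D(\cL)=D(\cL^{\gamma,b})$ put $v=\lambda u-\cL u$ with ${\rm Re}\,\lambda>R$, so the gradient bound of part (2) gives $\|\sqrt{x_i}\partial_{x_i}u\|_\infty\le d_2\,|\lambda|^{1/2}\|u\|_\infty+d_2\,|\lambda|^{-1/2}\|\cL u\|_\infty$, and one takes $|\lambda|^{1/2}=1/\varepsilon$ with $\varepsilon<\overline{\varepsilon}:=R^{-1/2}$.

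For part (1), the resolvent estimate from part (2) together with the analyticity of $(T(t))_{t\ge0}$ proved in Theorem \ref{l.dominio-anal} feeds into the standard sectoriality machinery of \cite[Propositions 2.1.1 and 2.1.11]{L}, used exactly as in the proof of Proposition \ref{p.1-gradiente}(2), to produce $K,\alpha>0$ depending only on $B$ and $\gamma$ with $\|t\cL T(t)\|\le Ke^{\alpha t}$ for $t\ge0$; the same bound with $\cL^{\gamma,b}$ in place of $\cL$ then follows from \eqref{e.bound-2}, estimating $\|\cL^{\gamma,b}T(t)u\|_\infty$ against $\|\cL T(t)u\|_\infty$ and $\|u\|_\infty$ and using $t\le e^{\alpha t}/(\alpha e)$. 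Since $T(t)u\in D(\cL)=D(\cL^{\gamma,b})$ for $t>0$ by analyticity, part (3) applied to $v=T(t)u$ gives $\|\sqrt{x_i}\partial_{x_i}(T(t)u)\|_\infty\le C\varepsilon^{-1}\|u\|_\infty+D\varepsilon\,t^{-1}Ke^{\alpha t}\|u\|_\infty$; optimizing with $\varepsilon=\sqrt t$ for $0<t<\overline t$ (with $\overline t$ small enough that $\sqrt{\overline t}<\overline\varepsilon$) and with a fixed small $\varepsilon$ for $t\ge\overline t$ yields the two gradient bounds of part (1), while the continuity of $\sqrt{x_i}\partial_{x_i}(T(t)u)$ and the limit assertion \eqref{limsem-varcoeff} follow once more from \eqref{limres} applied to $T(t)u\in D(\cL^{\gamma,b})$.

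I expect the only genuine difficulty to be bookkeeping: ensuring that at each step every constant can be traced back to $B$ and $\gamma$ alone, i.e.\ only to the quantities already controlled in Proposition \ref{p.dimd-grad} and in \eqref{e.bound-1}--\eqref{e.bound-2}. A secondary subtlety is that the two boundary-limit statements are not norm estimates, so they cannot be deduced by density from the corresponding statements on $C^2_\diamond(Q^d)$ and must instead be read off the explicit resolvent representation of $D(\cL^{\gamma,b})$.
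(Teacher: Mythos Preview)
Your proposal is correct and follows essentially the same perturbation-by-Neumann-series approach as the paper. The only cosmetic difference is the order: the paper proves part (3) first, directly from Proposition \ref{p.dimd-grad}(3) combined with \eqref{e.bound-2}, and then parts (2) and (1), whereas you establish (2) first and derive (3) from it via the Corollary \ref{c.1-gradiente} trick; the underlying identity $R(\lambda,\cL)=R(\lambda,\cL^{\gamma,b})\bigl(I-\cB R(\lambda,\cL^{\gamma,b})\bigr)^{-1}$ and the bookkeeping of constants are identical.
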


\begin{proof} Set $b=(b_1(0), b_2(0),\ldots, b_d(0))$ and  denote by $(\cL^{\gamma,b}, D(\cL^{\gamma, b}))$ and by $(\cB,D(\cB))$ the differential operators already considered in the proof of Theorem \ref{l.dominio-anal}.

%denote by $(\cL^{\gamma,b}, D(\cL^{\gamma, b}))$ the differential operator with constant drift term defined according to  \S 3 and considered in Theorem \ref{l.dominio-anal}. In particular,  by Theorem \ref{l.dominio-anal}(1) we know that $D(\cL)=D(\cL^{\gamma,b}$. Moreover, denote by $B:=\sum_{i=1}^d (b_i(x)-b_i(0))\partial_{x_i}$

We begin  showing first  part (3). By Proposition \ref{p.dimd-grad}(3) there exist $C,D,\ov{\ve}>0$ depending on $B$ and $\gamma$ such that, for every $0<\ve<\ov{\ve}$, $i=1,\ldots, d$ and $u\in D(\cL)(=D(\cL^{\gamma,b}))$, we have
\begin{equation}\label{e.stima-1}
\|\sqrt{x_i}\partial_{x_i}u\|_\infty\leq \frac{C}{\ve}\|u\|_\infty+D\ve \|\cL^{\gamma,b}u\|_\infty.
\end{equation}
If we take in \eqref{e.bound-2} $D\ve_0<1/2$, then from \eqref{e.stima-1} and \eqref{e.bound-2} it follows,  for every $0<\ve<\ov{\ve}$, $i=1,\ldots, d$ and $u\in D(\cL)$, that
\begin{eqnarray}\label{e.stima-2}
\|\sqrt{x_i}\partial_{x_i}u\|_\infty &\leq & \frac{C}{\ve}\|u\|_\infty+D\ve \|\cL^{\gamma,b}u\|_\infty\nonumber\\
&\leq & \frac{C}{\ve}\|u\|_\infty+D\ve \left(2\|\cL u\|_\infty+\frac{2C''}{\ve_0 }\|u\|_\infty\right)\nonumber\\
&=&\frac{C+2C''\ve_0^{-1}}{\ve}\|u\|_\infty+2D\ve \|\cL u\|_\infty .
\end{eqnarray}
This completes the proof of part (3).

(2) By Proposition \ref{p.dimd-grad}(2) and by \eqref{e.bound} there exist $d_1, R>0$ and $C'',D', \ov{\ve}>0$ such that, for every $0<\ve<\ov{\ve}$, $\lambda\in\C$ with ${\rm Re}\lambda>R$ and $u\in C(Q^d)$, we have
\begin{eqnarray}\label{e.stima-3}
\|\cB R(\lambda, \cL^{\gamma, b})u\|_\infty &\leq & D'\ve \|\cL^{\gamma, b}R(\lambda,\cL^{\gamma,b})u\|_\infty+\frac{C''}{\ve}\|R(\lambda,\cL^{\gamma,b})u\|_\infty\nonumber\\
&\leq & D'\ve \|\lambda  R(\lambda,\cL^{\gamma,b})u-u\|_\infty+\frac{C''}{\ve}\|R(\lambda,\cL^{\gamma,b})u\|_\infty\nonumber\\
&\leq & D'\ve(d_1+1)\|u\|_\infty+\frac{C''}{\ve}\frac{d_1}{|\lambda|}\|u\|_\infty.
\end{eqnarray}
Now, fix $\ve_0\in (0,\ov{\ve})$ such that $D'\ve(d_1+1)<1/4$ and  choose $R'>R$ such that $\frac{C''}{\ve_0}\frac{d_1}{|\lambda|}<1/4$ for all $\lambda\in \C$ with ${\rm Re}\lambda>R'$. So, from \eqref{e.stima-3} it follows, for every $\lambda\in \C$ with ${\rm Re}\lambda>R'$, that  $\|\cB R(\lambda, \cL^{\gamma, b})\|<1/2$. Hence, for every $\lambda\in \C$ with ${\rm Re}\lambda>R'$, the continuous linear operator $\cB R(\lambda, \cL^{\gamma, b})$ is invertible in $\cL(C(Q^d))$ with inverse given by 
\begin{equation}\label{e.inversa}
R(\lambda, \cL^{\gamma,b}+\cB)=R(\lambda, \cL^{\gamma, b})\sum_{n=0}^{\infty}(\cB R(\lambda, \cL^{\gamma, b})),
\end{equation}
(see \cite[Chap. III, \S 2, Lemma 2.5]{EN}). Since $\cL=\cL^{\gamma, b}+\cB$ with $D(\cL)=D(\cL^{\gamma, b})$ (see the proof of part (2) of Theorem \ref{l.dominio-anal}), by \eqref{e.inversa} and Proposition \ref{p.dimd-grad}(2) we obtain, for every $\lambda\in \C$ with ${\rm Re}\lambda>R'$ and $f\in C(Q^d)$, that
\begin{equation}\label{e.stima-4}
\|R(\lambda, \cL)f\|\leq 2\|R(\lambda, \cL^{\gamma, b})\|\|f\|_\infty\leq \frac{2d_1}{|\lambda|} \|f\|_\infty,
\end{equation}
and that
\begin{equation}\label{e.stima-5}
\|\sqrt{x_i}\partial_{x_i}(R(\lambda, \cL)f)\|\leq \frac{2d_2}{\sqrt{|\lambda|}}\|f\|_\infty, \quad i=1,\ldots, d.
\end{equation}
Moreover, Proposition \ref{p.dimd-grad}(2) and formula \eqref{e.inversa} also imply, for every  $i=1,\ldots, d$,  that $\sqrt{x_i}\partial_{x_i}(R(\lambda, \cL)f)\in C(Q^d)$  and that $\lim_{x_i\to 0^+}\sup_{x_j\in [0,M], j\not =i}\sqrt{x_i}\partial_{x_i}(R(\lambda, \cL)f)(x)=0$.

(1) Part (1) follows as in the proof of Proposition \ref{p.1-gradiente}(2) taking into account that the operator $(\cL,D(\cL))$ satisfies part (2) of this proposition and generates a contractive analytic  $C_0$--semigroup in $C(Q^d)$.
\end{proof}
%%%%%%%%%%%%%%%%%%%%%%%%%%%%%%%%%%%%%%%%%%%%%%%%%%%%%%%%%%%%%%%%%%%%%%%%%%%%%%%%%%%%%%%%%%%%%%%%%%%%%%%%%%%%%%%%%%%%%%%%%%%%%%%%%%%%%%%%%%%%%%%%%%%%%%%%%%%%%%%%%%%%%%%%%%%%%%%%%%%%%%%%%%%%%%%%%%%%%%

Now, we can show that the operator $(\cL, D(\cL))$ with $\Gamma$ any strictly positive continuous function on $Q^d$ also generates an analytic compact $C_0$--semigroup $(T(t))_{t\geq 0}$ of positive contractions in $C(Q^d)$. In order to prove this, we state the following lemma whose proof is straightforward.

\begin{lemma}\label{l.part} For each $n\in\N$ and $i\in \{1,\ldots, n-1\}$ set $I^i_n=\left[\frac{i-1}{n},\frac{i+1}{n}\right]$ and  let $\{\varphi^i_n\}_{i=1}^{n-1}\subset C^\infty(\R)$ such that $\sum_{i=1}^{n-1}(\varphi^i_n)^2\equiv 1$ on $[0,1]$, 
${\rm supp} (\varphi^i_n)\subset \left[\frac{i-1}{n},\frac{i+1}{n}\right]$ for $i=2,\ldots, n-2$, ${\rm supp} (\varphi^1_n)\subset \left]-\infty, \frac{2}{n}\right]$ and ${\rm supp} (\varphi^{n-1}_n)\subset \left[\frac{n-2}{n}, \infty\right[$. 

For each $x\in [0,1]^d$ and  $\mathbf{i}=(i_1,\ldots, i_d)\in J^d_n=\{1,\ldots, n-1\}^d$ set 
\begin{equation}\label{e.fun}
\Phi^{\mathbf{i}}_n(x)=\prod_{h=1}^d\varphi^{i_h}_n(x_h)
\end{equation}
 Then ${\rm supp}(\Phi^\mathbf{i}_n)\subset \prod_{h=1, i_h\not\in\{1,n-1\}}^d  I^{i_h}_n\times \prod_{h=1, i_h=1}^d\left]-\infty, \frac{2}{n}\right]\times \prod_{i_h, i_h=n-1}^d\left[\frac{n-2}{n},\infty\right[$ (in the suitable order) and $\sum_{\mathbf{i}\in J^d_n }(\Phi^\mathbf{i}_n)^2\equiv 1$ on $[0,1]^d$ for every $n\in\N$ and $\mathbf{i}\in J^d_n$. Moreover, if  $v=\sum_{\mathbf{i}\in J^d_n}\Phi^\mathbf{i}_nv_\mathbf{i}$ for some $\{v_\mathbf{i}\}_{\mathbf{i}\in J^d_n}\subset C(Q^d)$, then there exists $J\subset J^d_n$ such that $|J|\leq 3^d$ and $v=\sum_{\mathbf{i}\in J}\Phi^\mathbf{i}_nv_\mathbf{i}$ and hence, $\|v\|_\infty\leq 3^d \sup_{\mathbf{i}\in J^d_n}\|\Phi^\mathbf{i}_nv_\mathbf{i}\|_\infty$.
\end{lemma}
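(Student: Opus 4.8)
The plan is to verify the three assertions in sequence, treating them as essentially bookkeeping consequences of the one-dimensional partition of unity $\{\varphi^i_n\}_{i=1}^{n-1}$ whose properties are recalled in the statement. First I would record the support statement. Since $\Phi^{\mathbf i}_n(x)=\prod_{h=1}^d\varphi^{i_h}_n(x_h)$, its support is contained in the Cartesian product of the one-dimensional supports $\operatorname{supp}(\varphi^{i_h}_n)$, one factor per coordinate $h$. For those indices $h$ with $i_h\notin\{1,n-1\}$ we have $\operatorname{supp}(\varphi^{i_h}_n)\subset I^{i_h}_n$; for $i_h=1$ we use $\operatorname{supp}(\varphi^1_n)\subset\,]-\infty,\tfrac2n]$; for $i_h=n-1$ we use $\operatorname{supp}(\varphi^{n-1}_n)\subset[\tfrac{n-2}{n},\infty[$. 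Reordering the factors so that the ``interior'' indices, the indices equal to $1$, and the indices equal to $n-1$ are grouped together gives exactly the claimed form.

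Next I would prove the partition-of-unity identity $\sum_{\mathbf i\in J^d_n}(\Phi^{\mathbf i}_n)^2\equiv 1$ on $[0,1]^d$. This is a direct computation: for $x=(x_1,\ldots,x_d)\in[0,1]^d$,
\[
\sum_{\mathbf i\in J^d_n}(\Phi^{\mathbf i}_n(x))^2=\sum_{i_1=1}^{n-1}\cdots\sum_{i_d=1}^{n-1}\prod_{h=1}^d(\varphi^{i_h}_n(x_h))^2=\prod_{h=1}^d\Bigl(\sum_{i=1}^{n-1}(\varphi^i_n(x_h))^2\Bigr)=\prod_{h=1}^d 1=1,
\]
where the factorization of the multi-sum into a product of sums uses that the summation indices $i_1,\ldots,i_d$ are independent, and the last step uses $\sum_{i=1}^{n-1}(\varphi^i_n)^2\equiv1$ on $[0,1]$ coordinatewise.

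Finally I would establish the ``at most $3^d$ active terms'' bound. Fix $x\in[0,1]^d$. For each coordinate $h$, since $\operatorname{supp}(\varphi^i_n)\subset\bigl[\tfrac{i-1}{n},\tfrac{i+1}{n}\bigr]$ (for the interior indices, and the analogous half-lines at the ends), the one-dimensional point $x_h$ lies in at most three of the intervals $I^i_n$; hence $\varphi^i_n(x_h)\neq 0$ for at most three values of $i$. Therefore $\Phi^{\mathbf i}_n(x)\neq 0$ only if each component $i_h$ lies in a set of cardinality at most $3$, so the number of multi-indices $\mathbf i$ with $\Phi^{\mathbf i}_n(x)\neq 0$ is at most $3^d$. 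Taking $J$ to be the union over all $x\in Q^d$ of these active index sets would in general be too large, so instead the correct reading is local: for each fixed $x$ there is a set $J=J(x)\subset J^d_n$ with $|J|\le 3^d$ such that $v(x)=\sum_{\mathbf i\in J}\Phi^{\mathbf i}_n(x)v_{\mathbf i}(x)$, whence
\[
|v(x)|\le\sum_{\mathbf i\in J}|\Phi^{\mathbf i}_n(x)|\,|v_{\mathbf i}(x)|\le 3^d\sup_{\mathbf i\in J^d_n}\|\Phi^{\mathbf i}_n v_{\mathbf i}\|_\infty,
\]
using $|\Phi^{\mathbf i}_n|\le\prod_h|\varphi^{i_h}_n|$ together with the trivial bound on each partition function, or more simply $|\Phi^{\mathbf i}_n(x)v_{\mathbf i}(x)|\le\|\Phi^{\mathbf i}_n v_{\mathbf i}\|_\infty$ termwise. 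Taking the supremum over $x\in Q^d$ yields $\|v\|_\infty\le 3^d\sup_{\mathbf i\in J^d_n}\|\Phi^{\mathbf i}_n v_{\mathbf i}\|_\infty$.

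The only point requiring a little care — and the one I would expect to be the main (minor) obstacle — is the precise meaning of the set $J$ in the last assertion: it must be understood as depending on the evaluation point $x$ (the bound $|J|\le 3^d$ is a pointwise statement), since a single $J$ working uniformly in $x$ with $|J|\le 3^d$ cannot exist when $n$ is large; once this is phrased correctly the norm estimate is immediate. Everything else is the routine ``tensorization'' of the one-dimensional facts already granted in the hypotheses, exactly as in the one-dimensional construction used in the proof of Proposition \ref{p.1-gradiente}.
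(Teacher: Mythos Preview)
Your proof is correct and matches what the paper intends: the authors state the lemma ``whose proof is straightforward'' and give no argument, so there is nothing to compare against beyond confirming that the tensorization you carry out is the expected one. Your observation that the set $J$ must be understood as depending on the evaluation point $x$ is exactly right and is the only genuine subtlety; the norm estimate $\|v\|_\infty\le 3^d\sup_{\mathbf i}\|\Phi^{\mathbf i}_n v_{\mathbf i}\|_\infty$ is what the paper actually uses downstream (e.g.\ in \eqref{e:ga3}, \eqref{eq:c1}--\eqref{eq:c3}), and your pointwise reading delivers it.
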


\begin{theorem}\label{t.main1} Under Hypotheses \ref{hypo} and \ref{hypo-2}, the operator $(\cL, D(\cL))$  generates an analytic compact $C_0$--semigroup $(T(t))_{t\geq 0}$ of positive contractions in $C(Q^d)$.  Moreover, all the estimates in  Proposition \ref{p.ddim-gradiente} hold for  the operator $(\cL, D(\cL))$.
\end{theorem}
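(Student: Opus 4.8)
The plan is to prove Theorem \ref{t.main1} by a "freezing coefficients" argument, mimicking the structure of the proof of Proposition \ref{p.1-gradiente}(1), but now in the $d$-dimensional setting and with the extra multiplicative coefficient $\Gamma$. The case $\Gamma\equiv 1$ is already covered by Theorem \ref{l.dominio-anal} and Proposition \ref{p.ddim-gradiente}, so the point is to reduce the general case to it. First I would note that, since $\Gamma$ is continuous and strictly positive on the compact set $Q^d$, it is bounded above and below by positive constants; the operator $\cL$ has the same core $C^2_\diamond(Q^d)$ and the maximum principle (Corollary \ref{c.dissoper}, which already allows a general $\Gamma$) gives dissipativity and closability of $(L,C^2_\diamond(Q^d))$ in $C(Q^d)$. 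So it remains to establish the resolvent estimates and sectoriality; generation of a bounded analytic $C_0$-semigroup of positive contractions of angle $\pi/2$ then follows as in the earlier proofs, and compactness follows from norm-continuity plus compact resolvent (via the inclusion $D(\cL)\hookrightarrow C(Q^d)$ being compact, which can be deduced from the one-dimensional compactness in Remark \ref{r.compatezza} through the tensor structure, exactly as before).

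For the resolvent estimates I would use the partition of unity $\{\Phi^{\mathbf i}_n\}_{\mathbf i\in J^d_n}$ from Lemma \ref{l.part}. For each multi-index $\mathbf i$ I freeze $\Gamma$ at the centre of the corresponding cube, i.e. I introduce the operator $\cL^{\mathbf i}_n := \Gamma(x_{\mathbf i})\,\cL^{\gamma, b}$ (plus, where needed, a localized version of the first-order perturbation), where $x_{\mathbf i}$ is the centre. Since $\Gamma(x_{\mathbf i})$ is a positive constant, $\cL^{\mathbf i}_n$ is just a positive scalar multiple of an operator to which Theorem \ref{l.dominio-anal} and, crucially, the \emph{uniform} estimates of Proposition \ref{p.ddim-gradiente} apply: for $\lambda$ in a right half-plane $\{\mathrm{Re}\,\lambda>R\}$ (with $R$ depending only on $B$, $\gamma$, and the bounds on $\Gamma$) one has $\|R(\lambda,\cL^{\mathbf i}_n)\|\le d_1/|\lambda|$ and $\|\sqrt{x_i}\,\partial_{x_i} R(\lambda,\cL^{\mathbf i}_n)u\|_\infty\le d_2\|u\|_\infty/\sqrt{|\lambda|}$, with $d_1,d_2$ \emph{independent of $\mathbf i$ and $n$} — this is exactly where the uniformity emphasized throughout the paper is indispensable. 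I then define the parametrix $S_n(\lambda)f = \sum_{\mathbf i\in J^d_n}\Phi^{\mathbf i}_n R(\lambda,\cL^{\mathbf i}_n)(\Phi^{\mathbf i}_n f)$, check via Lemma \ref{l.part} that $\|S_n(\lambda)\|\le 3^d d_1/|\lambda|$, and compute $(\lambda-\cL)S_n(\lambda)f = f + C_1^n(\lambda)f + C_2^n(\lambda)f + C_3^n(\lambda)f$, where $C_1^n$ collects the commutator $\Gamma(x)-\Gamma(x_{\mathbf i})$ terms, $C_2^n$ the zeroth-order terms $\cL\Phi^{\mathbf i}_n$, and $C_3^n$ the first-order cross terms $\sum_i (\partial_{x_i}\Phi^{\mathbf i}_n)\,x_i\,\partial_{x_i}(\cdots)$ which are controlled by the $\sqrt{x_i}$-gradient estimates (here I would be slightly careful: $x_i\partial_{x_i} = \sqrt{x_i}\cdot\sqrt{x_i}\partial_{x_i}$, and near the faces $x_i$ is small while away from them $\sqrt{x_i}$ is bounded, so these terms are $O(1/\sqrt{|\lambda|})$ uniformly).

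The key quantitative point, exactly as in Proposition \ref{p.1-gradiente}(1), is that I first fix $n=\overline n$ large enough that the oscillation of $\Gamma$ on each cube $I^{\mathbf i}_{\overline n}$ is smaller than a prescribed $\varepsilon$ — this forces $\|C_1^{\overline n}(\lambda)\|<1/4$ uniformly in $\lambda$ in the half-plane, with the smallness governed by $\varepsilon$ and the uniform constants $d_1$; and then, with $\overline n$ now fixed, $\|C_2^{\overline n}(\lambda)\|$ and $\|C_3^{\overline n}(\lambda)\|$ are $O(1/|\lambda|)$ and $O(1/\sqrt{|\lambda|})$ respectively, so they are $<1/4$ for $\mathrm{Re}\,\lambda>R'$ with $R'$ depending only on $B$, $\gamma$, $\Gamma$ and $\overline n$ (hence ultimately only on $B,\gamma,\Gamma$). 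Thus $\|C_1^{\overline n}+C_2^{\overline n}+C_3^{\overline n}\|<1/2$, $(\lambda-\cL)S_{\overline n}(\lambda)$ is invertible with inverse of norm $\le 2$, so $\lambda\in\rho(\cL)$ with $R(\lambda,\cL)=S_{\overline n}(\lambda)((\lambda-\cL)S_{\overline n}(\lambda))^{-1}$, giving $\|R(\lambda,\cL)\|\le 2\cdot 3^d d_1/|\lambda|$; composing $S_{\overline n}(\lambda)$'s gradient bound with this gives $\|\sqrt{x_i}\partial_{x_i}R(\lambda,\cL)f\|_\infty\le \mathrm{const}/\sqrt{|\lambda|}$, together with the boundary-limit statement \eqref{limres-varcoeff}, all inherited termwise from Proposition \ref{p.ddim-gradiente}(2). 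Having the resolvent estimate on a half-plane with the $c/|\lambda|$ rate, I invoke \cite[Proposition 2.1.11]{L} to extend it to a sector of angle $>\pi/2$, conclude sectoriality and the $t\cL T(t)$ bound as in Proposition \ref{p.1-gradiente}(2) and \ref{p.ddim-gradiente}(1), and deduce the semigroup gradient estimates \eqref{eq:s2}--\eqref{eq:s3} and \eqref{limsem-varcoeff} from the resolvent ones by writing $\sqrt{x_i}\partial_{x_i}T(t)f$ via $\sqrt{x_i}\partial_{x_i}R(\lambda,\cL)$ applied to $T(t)f\in D(\cL)$ and optimizing in $\lambda$. Positivity passes to the limit from the parametrix (or directly from Corollary \ref{c.dissoper}), and contractivity from dissipativity of the closure.

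The main obstacle I expect is handling the first-order commutator terms $C_3^{\overline n}$ cleanly: unlike the zeroth-order terms, these involve $x_i\partial_{x_i}$ of the resolvent of the frozen operator, and one must absorb the factor $x_i$ into a bounded factor $\sqrt{x_i}$ times the good quantity $\sqrt{x_i}\partial_{x_i}$ — this works because the cutoff derivatives $\partial_{x_i}\Phi^{\mathbf i}_n$ are supported either near a face (where $\sqrt{x_i}$ is small) or in the interior (where $\sqrt{x_i}$ is bounded below and above), so no singularity arises, but the bookkeeping across all $\mathbf i$ and all faces must be done carefully to keep the constants uniform. A secondary subtlety is verifying that $\Phi^{\mathbf i}_n R(\lambda,\cL^{\mathbf i}_n)(\Phi^{\mathbf i}_n f)$ genuinely lies in $D(\cL)$ — this uses that $\Phi^{\mathbf i}_n$ vanishes near the faces $x_i=0$ unless $i_h=1$, is $\equiv 1$ near those, and satisfies the Neumann-type condition at $x_i=M$, matching the boundary conditions built into $D(\cL^{\gamma,b})=D(\cL)$ from Theorem \ref{l.dominio-anal}(1), together with the fact (Proposition \ref{p.ddim-gradiente}) that $\sqrt{x_i}\partial_{x_i}$ of the frozen resolvent extends continuously to $Q^d$ and vanishes on the faces.
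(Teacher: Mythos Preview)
Your proposal is correct and follows essentially the same freezing--coefficients parametrix argument as the paper: partition of unity $\{\Phi^{\mathbf i}_n\}$ from Lemma \ref{l.part}, frozen operators $\Gamma(x_{\mathbf i})\cL_1$ (where $\cL_1$ is the closure with $\Gamma\equiv 1$), the decomposition $(\lambda-\cL)S_n(\lambda)=I+C_1^n+C_2^n+C_3^n$, choice of $\overline n$ to make $C_1^{\overline n}$ small, and then $R'$ to control $C_2^{\overline n},C_3^{\overline n}$.

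Two minor differences are worth noting. First, the paper does \emph{not} rely on dissipativity alone to get generation: it first applies Dorroh's multiplicative perturbation theorem \cite{Do} to $\cL=\Gamma\cL_1$ to conclude directly that $(\cL,D(\cL_1))$ generates a $C_0$--semigroup of positive contractions, and then identifies $D(\cL)=D(\cL_1)$; this gives injectivity of $\lambda-\cL$ for ${\rm Re}\,\lambda>0$ for free, after which the parametrix yields the resolvent formula and the analyticity estimate. Your route via dissipativity plus the parametrix surjectivity also works, but Dorroh's theorem is what the paper actually invokes. Second, for the gradient estimate \eqref{e.varcoeff-1} the paper does \emph{not} differentiate $S_{\overline n}(\lambda)$ directly (which, as you note, introduces $\overline n$--dependent constants); instead it uses the identity $\lambda u-\cL_1 u=(1-\Gamma^{-1})\lambda u+\Gamma^{-1}(\lambda u-\cL u)$ together with the already--established gradient bound for $\cL_1$ from Proposition \ref{p.ddim-gradiente}(2), which gives the cleaner constant $K_3(\Gamma_0^{-1}+1)d_2$. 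Compactness in the paper is read off from the parametrix formula $R(\lambda,\cL)=S_{\overline n}(\lambda)(C(\lambda))^{-1}$ (each $R(\lambda/\Gamma_{\overline n}^{\mathbf i},\cL_1)$ is compact), rather than from the domain embedding.
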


\begin{proof} Without loss of generality we can suppose $M=1$, i.e., $Q^d=[0,1]^d$.

Denote by $(\cL_1, D(\cL_1))$ the closure of the operator  defined according to \eqref{e.opergen-d} with $\Gamma\equiv 1$ on $Q^d$. Then by  Theorem \ref{l.dominio-anal} (see also Corollary \ref{c.dissoper}) $(\cL_1, D(\cL_1))$ generates a (bounded analytic compact) $C_0$--semigroup of positive contractions in $C(Q^d)$ (and of angle $\pi/2$). Since $\cL=\Gamma \cL_1$ we can apply \cite[Theorem 12]{Do} to conclude that  $(\cL,D(\cL_1))$ generates a  $C_0$--semigroup $(T(t))_{t\geq 0}$ of positive contractions in $C(Q^d)$. But $D(\cL)=D(\cL_1)$ and hence, $(\cL,D(\cL))$ generates a  $C_0$--semigroup $(T(t))_{t\geq 0}$ of positive contractions in $C(Q^d)$. The identities  $D(\cL)=D(\cL_1)$ and $\cL=\Gamma \cL_1$ follow from the facts that  $\Gamma$ is a strictly positive continuous function on $Q^d$,  the operators $(\cL,D(\cL_1))$and $(\cL, D(\cL))$ are closed and that $C^2_\diamond(Q^d)$ is a core for both the operators $(\cL,D(\cL_1))$ and $(\cL, D(\cL))$.

We claim that the semigroup $(T(t))_{t\geq 0}$ is analytic. To show this thanks to Theorem \ref{l.dominio-anal} and Proposition \ref{p.ddim-gradiente} we can proceed as in the proof of Propositions 2.6 and 2.7 in \cite{AM-2} and so we indicate here only the main changes. 

In the sequel we follow the notation introduced in Lemma \ref{l.part}. 

For each $n\in \N$ and $\mathbf{i}\in J^d_n$ set $I^{\mathbf{i}}_n=\prod_{h=1}^dI^{i_h}_n$, fix $V_n^{\mathbf{i}}\in I_n^\mathbf{i}$ and define $\Gamma_n^\mathbf{i}=\Gamma(V_n^{\mathbf{i}})$.  

Fix $n\in\N$. By Proposition \ref{p.ddim-gradiente}(2),  there exists $R>0$ depending on $\max_{i=1,\dots,d}||b_i||_\infty$ and on $\gamma_1,\dots\gamma_d, \Gamma$ such that  for every $\lambda\in\C$ with ${\rm Re}\lambda>\frac{R}{\Gamma_0}$ ($\Gamma_0=\min_{x\in Q^d}\Gamma(x)>0$), we can consider the operators defined by
\[
R_n^\mathbf{i}(\lambda)=(\lambda-\Gamma_n^\mathbf{i}\cL_1)^{-1}
\] 
which satisfy
\begin{equation}\label{e:ga1}
\|R_n^\mathbf{i}(\lambda)\|=(\Gamma_n^\mathbf{i})^{-1}\left\|R\left(\frac{\lambda}{\Gamma_n^\mathbf{i}},\cL_1\right)\right\|\leq \frac{d_1}{\Gamma_0}\frac{1}{|\lambda|}.
\end{equation}
On the other hand,  for every $\lambda\in\C$ with ${\rm Re}\lambda>\frac{R}{\Gamma_0}$, the following equality holds
\[
\cL_1R_n^\mathbf{i}(\lambda)= (\Gamma_n^\mathbf{i})^{-1}(-I+\lambda R_n^\mathbf{i}(\lambda))
\]
and hence, via \eqref{e:ga1} we obtain, for every $\lambda\in\C$ with ${\rm Re}\lambda>\frac{R}{\Gamma_0}$, that
\begin{equation}\label{e:ga2}
\|\cL_1R_n^\mathbf{i}(\lambda)\|\leq (\Gamma_n^\mathbf{i})^{-1}\left(1+\frac{d_1}{\Gamma_0}\right)\leq \frac{\Gamma_0+d_1}{\Gamma_0^2}.
\end{equation}
We now 
consider the continuous functions $\{\Phi^{\mathbf{i}}_n\}_{\mathbf{i}\in J^d_n}$ defined  according to \eqref{e.fun} of the above  Lemma \ref{l.part} and define the the approximate resolvents of $(\cL, D(\cL))$ given by 
\[
S_n(\lambda)u=\sum_{\mathbf{i}\in J^d_n}\Phi_n^\mathbf{i}R^\mathbf{i}_n(\lambda)(\Phi_n^\mathbf{i}u)
\] 
for every $\lambda\in \C$ with ${\rm Re}\lambda>\frac{R}{\Gamma_0}$ and $u\in  C(Q^d)$. So, by Lemma \ref{l.part} and \eqref{e:ga1} we obtain, for every $\lambda\in\C$ with ${\rm Re}\lambda>\frac{R}{\Gamma_0}$, that
\begin{equation}\label{e:ga3}
\|S_n(\lambda)\|\leq \frac{3^dd_1}{\Gamma_0}\frac{1}{|\lambda|}.
\end{equation}
Moreover,  for every $\lambda\in\C$ with ${\rm Re}\lambda>\frac{R}{\Gamma_0}$ and $u\in C(Q^d)$, we have
\begin{eqnarray}\label{e:c0}
& & (\lambda-\cL)S_n(\lambda)u = u+\sum_{\mathbf{i}\in J^d_n}(\Gamma_n^\mathbf{i}-\Gamma)\Phi_n^\mathbf{i}\cL_1
(R^\mathbf{i}_n(\lambda)(\Phi_n^\mathbf{i}u))\nonumber\\
& & \qquad -\sum_{\mathbf{i}\in J^d_n}\cL(\Phi_n^\mathbf{i})R^\mathbf{i}_n(\lambda)(\Phi_n^\mathbf{i}u)-2\Gamma\sum_{\mathbf{i}\in J^d_n}\sum_{h=1}^d\gamma_h(x_h)x_h\partial_{x_h}(R^\mathbf{i}_n(\lambda)(\Phi_n^\mathbf{i}u))\partial_{x_h}(\Phi_n^\mathbf{i})\nonumber\\
& &\qquad  =:(I+C_1^n(\lambda)+C_2^n(\lambda)+C_3^n(\lambda))u.
\end{eqnarray}
We now fix $\ov{n}$ such that $\max_{x\in I_{\ov{n}}^\mathbf{i}}|\Gamma(x)-\Gamma_{\ov{n}}^\mathbf{i}|<\ve_0:=\frac{\Gamma_0^2}{4.3^d(\Gamma_0+d_1)}$ for all $\mathbf{i}\in J^d_{\ov{n}}$. Then, from \eqref{e:ga1}, \eqref{e:ga2}, Proposition \ref{p.ddim-gradiente}(2) and Lemma \ref{l.part} it follows, for every $\lambda\in \C$ with ${\rm Re}\lambda>\frac{R}{\Gamma_0}$ and $u\in C(Q^d)$,  that 
\begin{equation}\label{eq:c1}
\|C_1^{\ov{n}}(\lambda)u\|_\infty\leq 3^d\ve_0\max_{\mathbf{i}\in J^d_{\ov{n}}}\|\Phi_{\ov{n}}^\mathbf{i}\cL_1
(R^\mathbf{i}_{\ov{n}}(\lambda)(\Phi_{\ov{n}}^\mathbf{i}u))\|_\infty\leq 3^d\ve_0\frac{\Gamma_0+d_1}{\Gamma_0^2}\|u\|_\infty<\frac{1}{4}\|u\|_\infty,
\end{equation}
\begin{equation}\label{eq:c2}
\|C_2^{\ov{n}}(\lambda)u\|_\infty\leq |J^d_{\ov{n}}|\max_{\mathbf{i}\in J^d_{\ov{n}}}\|\cL(\Phi_{\ov{n}}^\mathbf{i})\|_\infty\|R^\mathbf{i}_{\ov{n}}(\lambda)(\Phi_{\ov{n}}^\mathbf{i}u)\|_\infty\leq |J^d_{\ov{n}}|K_1\frac{d_1}{\Gamma_0}\frac{1}{|\lambda|}\|u\|_\infty,
\end{equation}
\begin{eqnarray}\label{eq:c3}
\|C_3^{\ov{n}}(\lambda)u\|_\infty&\leq & 2d3^d\|\Gamma\|_\infty\max_{\mathbf{i}\in J^d_{\ov{n}},\, h\in\{1,\ldots,d\}}\|\gamma_h\partial_{x_h}(\Phi_{\ov{n}}^\mathbf{i})\|_\infty\|\partial_{x_h}(R^\mathbf{i}_{\ov{n}}(\lambda)(\Phi_{\ov{n}}^\mathbf{i}u))\|_\infty \nonumber\\
& \leq  & 2d3^dK_2\frac{d_2}{\sqrt{|\lambda|}}\|u\|_\infty,
\end{eqnarray}
where $K_1=\max_{\mathbf{i}\in J^d_{\ov{n}}}\|\cL(\Phi_{\ov{n}}^\mathbf{i})\|_\infty$ and  $K_2=\|\Gamma\|_\infty \max_{\mathbf{i}\in J^d_{\ov{n}},\, h\in\{1,\ldots,d\}}\|\gamma_h\partial_{x_h}(\Phi_{\ov{n}}^\mathbf{i})\|_\infty$. We note that the constants $d_1$, $d_2$ and $R$ depend only on $B$ (with $B$ any fixed positive real number greater or equal than $\max_{i=1}^d\|b_i\|_\infty$) and on $\gamma=(\gamma_1,\ldots,\gamma_d)$ and that   the constants $K_1$ and $K_2$ depend only on $B$, $\Gamma$, $\gamma$  and on the functions $\{\Phi^{\mathbf{i}}_{\ov{n}}\}_{\mathbf{i}\in J^d_n}$. Now, by  \eqref{eq:c2} and \eqref{eq:c3} we can choose $R'>\max\{\frac{R}{\Gamma_0}, R\}$ large enough to get $\max\{\|C_2^{\ov{n}}(\lambda)\|,\|C_3^{\ov{n}}(\lambda)\|\}<1/4$ for all $\lambda\in \C$ with ${\rm Re}\lambda>R'$. So,  $\|C_1^{\ov{n}}(\lambda)+C_2^{\ov{n}}(\lambda)+C_3^{\ov{n}}(\lambda)\|<1/2$ via \eqref{eq:c1} for all $\lambda\in \C$ with ${\rm Re}\lambda>R'$ (we note that $R'$ depends on $B$, $\Gamma$, $\gamma$  and on the functions $\{\Phi^{\mathbf{i}}_{\ov{n}}\}_{\mathbf{i}\in J^d_n}$). This inequality, combined r with the equality \eqref{e:c0}, implies that the operator $C(\lambda)=(\lambda-\cL)S_{\ov{n}}(\lambda)$ is invertible in $\cL(C(Q^d))$ with  $\|(C(\lambda))^{-1}\|\leq 2$ for every $\lambda\in \C$ with ${\rm Re}\lambda>R'$. Since the operator $(\cL,D(\cL))$ generates a contractive $C_0$--semigroup in $C(Q^d)$ and so the operator $(\lambda-\cL)$ is injective for all $\lambda\in \C$ with ${\rm Re}\lambda>0$, it follows  that  $R(\lambda, \cL)=S_{\ov{n}}(\lambda)(C(\lambda))^{-1}$ and that by \eqref{e:ga3}
\begin{equation}\label{eq:res1}
\|R(\lambda, \cL)\|=\|S_{\ov{n}}(\lambda)(C(\lambda))^{-1}\|\leq \frac{2d_13^d}{\Gamma_0}\frac{1}{|\lambda|}
\end{equation}
for every $\lambda\in \C$ with ${\rm Re}\lambda>R'$. This inequality ensures that the operator $(\cL, D(\cL))$ is sectorial (see  \cite[Proposition 2.1.11]{L}), i.e., generates an analytic $C_0$--semigroup in $C(Q^d)$. Moreover, the identity $R(\lambda, \cL)=S_{\ov{n}}(\lambda)(C(\lambda))^{-1}$ implies that the operator $R(\lambda, \cL)$ is compact as $S_{\ov{n}}(\lambda)$ is compact, being $S_{\ov{n}}(\lambda)$ a sum of compact operators (observe  that $R^{\mathbf{i}}_{\ov{n}}=(\Gamma_{\ov{n}}^\mathbf{i})^{-1}R\left(\frac{\lambda}{\Gamma_{\ov{n}}^\mathbf{i}},\cL_1\right)$  and that $(\cL_1, D(\cL_1))$ generates an analytic  compact $C_0$--semigroup in $C(Q^d)$). So, the semigroup $(T(t))_{t\geq 0}$ is also compact, being it analytic and so norm continuous with compact resolvents. 

We now prove  that the estimates  \eqref{e.varcoeff} and \eqref{e.varcoeff-1} in Proposition \ref{p.ddim-gradiente} are shared by the operator $\cL$.

By  \eqref{eq:res1} the first part of Proposition \ref{p.ddim-gradiente}(2) is already proved.

Let $\lambda\in \C$ with ${\rm Re}\lambda>R'$ and $u\in D(\cL)=D(\cL_1)$. Then there exists $v\in C(Q^d)$ such that $R(\lambda, \cL_1)v=u$. So, by Proposition \ref{p.ddim-gradiente}(2), for every $ i=1,\ldots,d$, we have
\begin{equation}\label{eq:res2}
\|\sqrt{x_i}\partial_{x_i}u\|_\infty=\|\sqrt{x_i}\partial_{x_i}(R(\lambda,\cL_1)v)\|_\infty\leq \frac{d_2}{\sqrt{|\lambda|}}\|u\|_\infty=\frac{d_2}{\sqrt{|\lambda|}}\|\lambda u-\cL_1u\|_\infty.
\end{equation}
On the other hand, there exists also $w\in C(Q^d)$ such that $R(\lambda, \cL)w=u$. Then $\lambda u-\cL_1u=\left(1-\frac{1}{\Gamma}\right)\lambda u+\frac{1}{\Gamma}(\lambda u-\cL u)$ and hence, by \eqref{eq:res1} we have
\begin{eqnarray}\label{eq:res3}
\|\lambda u-\cL_1u\|_\infty &\leq &  \left(\frac{1}{\Gamma_0}+1\right)|\lambda|\|u\|_\infty+\frac{1}{\Gamma_0}\|\lambda u-\cL u\|_\infty\nonumber\\
&= & \left(\frac{1}{\Gamma_0}+1\right)|\lambda|\|R(\lambda, \cL)w\|_\infty+\frac{1}{\Gamma_0}\|w\|_\infty\nonumber\\
&\leq & \frac{2d_13^d}{\Gamma_0}\left(\frac{1}{\Gamma_0}+1\right)\|w\|_\infty+\frac{1}{\Gamma_0}\|w\|_\infty\nonumber\\
&\leq & K_3\left(\frac{1}{\Gamma_0}+1\right)\|w\|_\infty,
\end{eqnarray}
with $K_3=\max\left\{1, \frac{2d_13^d}{\Gamma_0}\right\}$. Combining \eqref{eq:res2} with \eqref{eq:res3} we obtain, for every 
$ i=1,\ldots,d$, that
\[
\|\sqrt{x_i}\partial_{x_i}(R(\lambda, \cL)w)\|_\infty\leq K_3\left(\frac{1}{\Gamma_0}+1\right)\frac{d_2}{\sqrt{|\lambda|}}\|w\|_\infty.
\]
Since $u$ is arbitrary and $R(\lambda, \cL)\colon C(Q^d)\to D(\cL)$ is bijective (for $\lambda\in \C$ with ${\rm Re}\lambda>R'$), the inequality \eqref{e.varcoeff-1} in Proposition \ref{p.ddim-gradiente}(2) is satisfied. Moreover,  the equality $R(\lambda, \cL)=S_{\ov{n}}(\lambda)(C(\lambda))^{-1}$ implies, for every $u\in C(Q^d)$ and $i=1,\ldots,d$, that $\sqrt{x_i}\partial_{x_i}(R(\lambda,\cL)u)\in C(Q^d)$  and that $\lim_{x_i\to 0^+}\sup_{x_i\in [0,M],\, j\in\{1,\ldots,d\}\setminus\{i\}}\sqrt{x_i}\partial_{x_i}(R(\lambda,\cL)u)=0$ via \eqref{limres-varcoeff} . So,  Proposition \ref{p.ddim-gradiente}(2) is valid.

One can prove that the estimates in  Proposition \ref{p.ddim-gradiente}(1),(2) hold for $\cL$   by arguing as in the proof of Proposition \ref{p.ddim-gradiente}.
\end{proof}

%%%%%%%%%%%%%%%%%%%%%%%%%%%%%%%%%%%%%%%%%%%%%%%%%%%%%%%%%%%%%%%%%%%%%%%%%%%%%%%%%%%%%%%%%%%%%%%%%%%%%%%%%%%%%%%%%%%%%%%%%%%%%%%%%%%%%%%%%%%%%%%%%%%%%%%%%%%%%%%%%%%%%%%%%%%%%%%%%%%%%%%%%%%%%%%%%%%%%%
%%%%%%%%%%%%%%%%%%%%%%%%%%%%%%%%%%%%%%%%%%%%%%%%%%%%%%%%%%%%%%%%%%%%%%%%%%%%%%%%%%%%%%%%%%%%%%%%%%%%%%%%%%%%%%%%%%%%%%%%%%%%%%%%%%%%%%%%%%%%%%%%%%%%%%%%%%%%%%%%%%%%%%%%%%%%%%%%%%%%%%%%%%%%%%%%%%%%%%%%%%%%%%%%%%%%%%%%%%%%%%%%%%%%%%%%%%%%%%%%%%%%%%%%%%%%%%%%%%%%%%%%%%%%%%%%%%%%%%%%%%%%%%%%%%%%%%%%%%%%%%%%%%%%%%%%%%%%%%%%%%%%%%%

We now consider the following second order elliptic differential operator
\begin{equation}\label{e.opergen-dquad}
U=\Gamma(x)\sum_{i=1}^d[\gamma_i(x_i)x_i(1-x_i)\partial^2_{x_i}+b_i(x)\partial_{x_i}],\quad x\in Q^d,
\end{equation}
where   $\Gamma$, $b_i$ and $\gamma_i$,  for $i=1,\ldots,d$,  are continuous functions on $Q^d=[0,1]^d$ and on $[0,1]$ respectively. We assume that 

\begin{hypotheses}\label{hypon}The coefficients $\Gamma$, $b_i$ and $\gamma_i$, for $i=1,\ldots,d$, are continuous functions satisfying the following conditions.
\begin{itemize}
\item[\rm (i)] The functions  $\Gamma$ and $\gamma_i$, for $i=1,\ldots,d$, are strictly positive on $Q^d$ and on $[0,1]$ respectively. 
\item[\rm (ii)]  Let $b(x)=(b_1(x),\ldots, b_d(x))$ for $x\in Q^d$. Then $\langle b(x),\nu(x)\rangle \geq 0$ for every $x\in \partial Q^d$, where
 %$\partial Q^d_0=\cup_{i=1}^d\{x\in Q^d:\  x_i=0 \mbox{\ or } x_1=1 \}$ and
 $\nu$ denotes the unit inward normal at $\partial Q^d$.
\item[\rm (iii)] There exist $\delta>0$ and $C>0$ such that, for every $i=1,\ldots,d$ and $x,\ x'\in Q^d$, we have
\begin{equation}\label{e.ass-2q}
|b_i(x)-b_i(x')|\leq C\sqrt{x_i},
\end{equation}
if  $|x_i|<\delta$ and  $x_i'=0$; while
\begin{equation}\label{e.ass-2qq}
|b_i(x)-b_i(x')|\leq  C\sqrt{1-x_i},
\end{equation}
if   $|1-x_i|<\delta$ and $x_i'=1$.
\end{itemize}
\end{hypotheses}

Proceeding in a similar way as in the proofs of Lemma \ref{l.posmin} and of Corollary \ref{c.dissoper} (or see \cite{CC1}), one shows that Hypotheses \ref{hypon}(i)--(ii) imply that a \textit{minimum principle} holds for the operator $(U, C^2(Q^d))$. Since $U1=0$, it follows that the operator $(U,C^2(Q^d))$  is dissipative and hence, $(U,C^2(Q^d))$ is closable in $C(Q^d)$ with closure $(\cU, D(\cU))$ a dissipative operator in $C(Q^d)$.
 %as $C^2(Q^d)$ is dense in $C(Q^d)$. 
Moreover, we  have

\begin{theorem}\label{t.main-2} Under Hypotheses \ref{hypon}, the operator $(\cU, D(\cU))$ generates an analytic compact $C_0$--semigroup $(T(t))_{t\geq 0}$ of positive contractions in $C(Q^d)$. 
%In particular, the operator $(\cU, D(\cU))$ satisfies all the properties in Proposition \ref{p.ddim-gradiente}.
%In particular, $C^2(Q^d)$ is core for $\cU$.
\end{theorem}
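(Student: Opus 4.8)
The proof follows, layer by layer, the scheme developed for Theorem \ref{t.main1}; the \emph{only genuinely new ingredient} is the treatment of the faces $\{x_i=1\}$, which is reduced to the already understood face $\{x_i=0\}$ by the reflection $x_i\mapsto 1-x_i$. As a first reduction, exactly as in the opening lines of the proof of Theorem \ref{t.main1}, one checks that $\cU=\Gamma\,\cU_1$ and $D(\cU)=D(\cU_1)$, where $\cU_1$ denotes the closure of $U$ with $\Gamma\equiv 1$ on $Q^d$ (both operators have $C^2(Q^d)$ as a core, by the analogue of Proposition \ref{p.-dcore}), and then applies \cite[Theorem 12]{Do} to deduce the generation statement for $\cU$ from the one for $\cU_1$; dissipativity, closability and the positivity of the generated semigroup are already granted by the minimum principle recorded before the statement. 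Hence it suffices to prove that $\cU_1$ generates a bounded analytic compact $C_0$--semigroup, and from now on $\Gamma\equiv 1$.

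For the one--dimensional building block $\gamma(x)x(1-x)u''+bu'$ on $C([0,1])$, with $b\in[0,B]$ and $\gamma\in C([0,1])$ strictly positive, one observes that near $x=0$ it equals $[\gamma(x)(1-x)]\,x\,u''+bu'$, i.e. an operator of the type $\Lb$ of Proposition \ref{p.1-gradiente} with the strictly positive leading factor $\gamma(x)(1-x)$; near $x=1$ the substitution $y=1-x$ turns it into $[\gamma(1-y)(1-y)]\,y\,\partial^2_y u-b\,\partial_y u$, again of type $\Lb$, with leading factor strictly positive near $y=0$ and with $-b\in[-B,0]$ still a nonnegative (inward) drift in the $y$ variable; away from $\{0,1\}$ it is uniformly elliptic. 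A localization of the same kind as in the proof of Proposition \ref{p.1-gradiente}, now patching the degenerate half--line models near each of the two endpoints with the nondegenerate constant--coefficient models of Remark \ref{r1-laplace} in the interior, then yields, with constants uniform in $b\in[0,B]$, the resolvent bound $\|R(\lambda,\cdot)\|\le c/|\lambda|$ for $\mathrm{Re}\,\lambda$ large together with the weighted gradient bounds
\[
\|\sqrt{x}\,(R(\lambda,\cdot)u)'\|_\infty\le\frac{c}{\sqrt{|\lambda|}}\|u\|_\infty,\qquad \|\sqrt{1-x}\,(R(\lambda,\cdot)u)'\|_\infty\le\frac{c}{\sqrt{|\lambda|}}\|u\|_\infty,
\]
and their semigroup counterparts, i.e. the analogue of Proposition \ref{p.1-gradiente}(2).

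Tensorizing as in Section 3, one obtains for the $d$--dimensional operator with $\Gamma\equiv 1$ and with each $b_i$ frozen to a boundary value the analogues of Propositions \ref{p.dimd-grad} and \ref{p.ddim-gradiente}, now carrying both $\sqrt{x_i}\partial_{x_i}$ and $\sqrt{1-x_i}\partial_{x_i}$ weighted estimates, uniformly in the drift bound. The passage to the genuine variable drift is the analogue of Theorem \ref{l.dominio-anal}: Hypothesis \ref{hypon}(iii) gives $|b_i(x)-b_i(x')|\le C\sqrt{x_i}$ near $\{x_i=0\}$ and $|b_i(x)-b_i(x')|\le C\sqrt{1-x_i}$ near $\{x_i=1\}$, so that, combined with the weighted gradient bounds, the difference between the full first order part and a suitably frozen one is relatively bounded with bound zero and \cite[Chap.~III, \S2, Thms 2.7 \& 2.10]{EN} applies. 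Since near $\{x_i=0\}$ the relevant frozen value is $b_i(0)$ and near $\{x_i=1\}$ it is $b_i(1)$, and these two constants may differ, this freezing of the drift is carried out together with the freezing of $\gamma_i$ inside the localization step, exactly as the freezing of $\Gamma$ was handled in the proof of Theorem \ref{t.main1}: one builds approximate resolvents $S_{\ov n}(\lambda)=\sum_{\mathbf i}\Phi^{\mathbf i}_{\ov n}R^{\mathbf i}_{\ov n}(\lambda)(\Phi^{\mathbf i}_{\ov n}\,\cdot\,)$ as in Lemma \ref{l.part}, where $R^{\mathbf i}_{\ov n}(\lambda)$ is the resolvent of a tensor product of one--dimensional models chosen according to whether the cube $I^{\mathbf i}_{\ov n}$ sits near $\{x_i=0\}$, near $\{x_i=1\}$ (reflected), or in the interior, and one shows that in the identity $(\lambda-\cU_1)S_{\ov n}(\lambda)=I+C_1^{\ov n}(\lambda)+C_2^{\ov n}(\lambda)+C_3^{\ov n}(\lambda)$ the three correction terms have norm $<1/2$ for $\mathrm{Re}\,\lambda$ large --- the first small by continuity of the coefficients, the second $O(1/|\lambda|)$, the third $O(1/\sqrt{|\lambda|})$ by the weighted gradient estimates --- whence $\|R(\lambda,\cU_1)\|\le c/|\lambda|$ and sectoriality of $\cU_1$ (as in \cite[Proposition 2.1.11]{L}).

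Finally, for compactness, each one--dimensional model operator has compact resolvent (Remark \ref{r.compatezza} and the corresponding statement for $x(1-x)$--type operators, \cite{Met,CMPR}), hence so do the tensor products $R^{\mathbf i}_{\ov n}(\lambda)$, hence so does $\cU_1$ through the identity $R(\lambda,\cU_1)=S_{\ov n}(\lambda)(I+C_1^{\ov n}+C_2^{\ov n}+C_3^{\ov n})^{-1}$ with $S_{\ov n}(\lambda)$ a finite sum of compact operators; being analytic and so norm--continuous with compact resolvent, the semigroup generated by $\cU_1$ --- and therefore that generated by $\cU=\Gamma\,\cU_1$ --- is compact. The main technical point is the bookkeeping around $\{x_i=1\}$: verifying that the reflection $x_i\mapsto 1-x_i$ preserves both the inward--drift condition and the strict positivity of the leading coefficient near the reflected degenerate end, and checking that all the constants produced in the assembled estimates stay uniform (which is precisely what makes the freezing argument close), including at corners where some coordinates approach $0$ while others approach $1$.
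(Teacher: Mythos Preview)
There is a genuine gap in your opening reduction. Dorroh's theorem \cite[Theorem 12]{Do} only transfers the \emph{generation of a contraction $C_0$--semigroup} from $\cU_1$ to $\cU=\Gamma\,\cU_1$; it says nothing about analyticity. In the proof of Theorem~\ref{t.main1} that you cite as a model, Dorroh yields generation and then a \emph{separate} freezing argument for $\Gamma$ establishes the sectorial estimate for $\Gamma\cL_1$. Your ``Hence it suffices\ldots and from now on $\Gamma\equiv 1$'' discards precisely that step: everything you do afterwards concerns $\cU_1$ only, and you never return to $\Gamma$. Consequently the analyticity of the semigroup generated by $\cU$ is left unproved, and with it the compactness of that semigroup (which you deduce from norm--continuity).

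Even once this is repaired (by running a $\Gamma$--freezing layer on top of the estimates you obtain for $\cU_1$), your route differs from the paper's and does considerably more work. The paper neither separates out $\Gamma$ nor rebuilds a one--dimensional $x(1-x)$ theory. Instead it takes the coarse $2^d$--piece partition $\{\Phi^{\mathbf i}\}_{\mathbf i\in\{1,2\}^d}$ of Lemma~\ref{l.part} with $n=2$, one piece per vertex, and for each $\mathbf i$ uses the reflection $\psi_{\mathbf i}$ to map the corresponding corner onto a neighbourhood of the origin. Under $\psi_{\mathbf i}$ the restricted operator $U_{\mathbf i}$ becomes an operator of exactly the form~\eqref{e.opergen-d} on $[0,2/3]^d$: the factor $1-y_h$ is bounded below there and is absorbed into $\gamma_h$, the sign change in the drift matches Hypotheses~\ref{hypo}(ii), and Hypothesis~\ref{hypon}(iii) becomes Hypothesis~\ref{hypo-2}. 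Thus Theorem~\ref{t.main1} applies \emph{verbatim} to each reflected local operator --- handling $\Gamma$, the $\gamma_i$, and the variable drift all at once --- and delivers the uniform resolvent and $\sqrt{x_h(1-x_h)}\,\partial_{x_h}$ bounds~(2)$'$ for each $\cU_{\mathbf i}$. A single approximate--resolvent patching $S(\lambda)=\sum_{\mathbf i}\Phi^{\mathbf i}R(\lambda,\cU_{\mathbf i})(\Phi^{\mathbf i}\,\cdot\,)$ then gives $\|R(\lambda,\cU)\|\le c/|\lambda|$, and generation itself comes from Lumer--Phillips (dissipativity plus the range condition supplied by $S(\lambda)$), not from Dorroh. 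Your scheme, by contrast, essentially redoes Sections~2--4 with a second endpoint added before reaching the same destination.
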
 

For easy reading the proof of Theorem \ref{t.main-2} it is useful to introduce some notation and point out some results. 

In the sequel we follow the notation of Lemma \ref{l.part}. Let $n=2$ and set $\mathbf{I}^\mathbf{i}=\mathbf{I}^\mathbf{i}_2$ and $\Phi^\mathbf{i}=\Phi^\mathbf{i}_2$ for $\mathbf{i}\in J^d_2=\{1,2\}^d$. For a fixed $\mathbf{i}\in J^d_2$, the set  $\mathbf{I}^\mathbf{i}$ contains a unique vertex of $Q^d$, i.e., the vertex $V^\mathbf{i}$ of $Q^d$ with $(V^\mathbf{i})_h=0$ if $i_h=1$ and  $(V^\mathbf{i})_h=1$ if $i_h=2$. 
  If we denote by  $\psi_\mathbf{i}\colon Q^d\to Q^d$ the map given by setting
$\psi_\mathbf{i}(x)=y$,  for  $x\in Q^d$, with 
 $y_h=x_h$ if $i_h=0$ and $y_h=1-x_h$ if $i_h=2$. Clearly, $\psi_\mathbf{i}$ is a $ C^\infty$--diffeomorphism such that $\psi_\mathbf{i}(\mathbf{I}^\mathbf{i})=\mathbf{I}^\mathbf{i_0}$, where $\mathbf{i_0}$ denotes the element of $J^d_2$ with coordinates all equal to $1$.   Moreover,  the operator $\Psi_\mathbf{i}\colon C(Q^d)\to C(Q^d)$ defined by $\Psi_\mathbf{i}(u)=u\circ \psi_\mathbf{i}$ is a surjective isometry such that  $\Psi_\mathbf{i}(C^k(Q^d))=C^k(Q^d)$ for all $k\in\N$ (also $\Psi_\mathbf{i}(C^k(\mathbf{I}^\mathbf{i}))=C^k(\mathbf{I}^\mathbf{i_0})$). In particular,  $\Psi_\mathbf{i}$ transforms the operator $U_\mathbf{i}=U|_{\mathbf{I}^\mathbf{i}}$ into the  operator $L_\mathbf{i}$ of type \eqref{e.opergen-d} acting on the space $C^2_\diamond(\mathbf{I}^\mathbf{i_0})$. Indeed, we have, for every $u\in C^2_\diamond(\mathbf{I}^\mathbf{i_0})$, that 
\[
(U_\mathbf{i}\circ \Psi_\mathbf{i})(u)=\Gamma(x)\sum_{h=1}^d[\gamma_h(x_h)x_h(1-x_h)\partial_{y_h}^2u(\psi_\mathbf{i}(x))+b_h(x)c_h\partial_{y_h}u(\psi_\mathbf{i}(x))],
\] 
where $c_h=1$ if $i_h=1$ and $c_h=-1$  if $i_h=2$, and hence
\begin{eqnarray*}
& & (\Psi_\mathbf{i}^{-1}\circ U_\mathbf{i}\circ \Psi_\mathbf{i})(u)\\
& & \quad =\Gamma(\psi_\mathbf{i}^{-1}(y))\sum_{h=1}^d[\gamma_h((\psi_\mathbf{i}^{-1}(y))_h)y_h(1-y_h)\partial_{y_h}^2u(y)+b_h(\psi_\mathbf{i}^{-1}(y))c_h\partial_{y_h}u(y)].
\end{eqnarray*}
Now, we observe that if we set $\tilde{\gamma}_h(y)=\gamma_h((\psi_\mathbf{i}^{-1}(y))_h)(1-y_h)$ and $\tilde{b}_h(y)=b_h(\psi_\mathbf{i}^{-1}(y))c_h$  for $y\in I^\mathbf{i_0}$ and $h=1,\ldots,d$, then the functions $\tilde{\gamma}_h,\, \tilde{b}_h$ are continuous on $I^\mathbf{i_0}$ and on $[0,1/2]$ respectively, and each  function $\tilde{\gamma}_h$ is strictly positive (as $1/3\leq 1-y_h\leq 1$ for every $h$). Also, by Hypothesis \ref{hypon}(iii) we have, for every $h=1,\ldots,d$ and $y,\, y'\in \mathbf{I}^\mathbf{i_0} $ with $y'_h=0$ and $|y_h|\leq \delta$, that 
%\[
%|\tilde{\gamma}_h(y)-\tilde{\gamma}_h(y')|=(1-y_h)| \tilde{\gamma}_h(y)-\tilde{\gamma}_h(y')|\leq \left\{\begin{array}{ll}C|y_h|& \mbox{if $i_h=1$}\\ C|1-(1-y_h)|=C|y_h| & \mbox{if $i_h=2$},\end{array}\right.
%\]
%and that 
\[
|\tilde{b}_h(y)-\tilde{b}_h(y')|\leq C|y_h|.
\]
Finally, Hypothesis \ref{hypon}(ii) implies that if we set  $\tilde{b}=(\tilde{b}_1, \ldots, \tilde{b}_d)$ then $\langle \tilde{b}, \nu\rangle \geq 0$ on $\partial\mathbf{I}^\mathbf{i_0}_0$.  

Since Hypotheses \ref{hypo} are fulfilled,  we can apply Theorem \ref{t.main1}  to conclude that the closure $(\cL_\mathbf{i}, D(\cL_\mathbf{i}) )$ of the operator  $(L_\mathbf{i}, C^2_\diamond(\mathbf{I}^\mathbf{i_0}))$ generates an  analytic compact  $C_0$--semigroup in $C(\mathbf{I}^\mathbf{i_0})$ of positive contractions. Moroever,  the operator  $(\cL_\mathbf{i}, D(\cL_\mathbf{i}) )$ satisfies  Proposition \ref{p.ddim-gradiente}. So, by similarity the closure $(\cU_\mathbf{i}, D(\cU_\mathbf{i}))$ of the operator $(U_\mathbf{i}, C^2_{\diamond'}(\mathbf{I}^\mathbf{i}))$ (here, $u\in C^2_{\diamond'}(\mathbf{I}^\mathbf{i})$ if $u\in C^2(\mathbf{I}^\mathbf{i})$ and $\partial_{x_h}u(x)=0$ if either  $x_h=2/3$ and $i_h=1$ or $x_h=1/3$ and $i_h=2$)  generates an analytic compact  $C_0$--semigroup in $C(\mathbf{I}^\mathbf{i})$ of positive contractions and  satisfies the proper analogue of Proposition \ref{p.ddim-gradiente}. In particular, the proper analogue of  Proposition \ref{p.ddim-gradiente}(2)  turns out to be
\begin{itemize}
\item[\rm (2)'] There exist  $d_1, d_2, \, R>0$  depending only on $B$ (with $B$ any positive real number $\geq \max_{i=1}^d\|b_i\|_\infty$) and on $\Gamma$, $\gamma=(\gamma_1,\ldots, \gamma_d)$ such that, for every $\lambda\in\C$ with ${\rm Re}\lambda>R$, $h=1,\ldots, d$, $\mathbf{i}\in J^d_2$ and $u\in C(\mathbf{I}^\mathbf{i})$, we have
\[
\|R(\lambda,\cU_\mathbf{i}) \|\leq \frac{d_1}{|\lambda|},
\]
\[
\|\sqrt{x_h (1-x_h)}\partial_{x_h}(R(\lambda,\cU_\mathbf{i}) u)\|_\infty\leq \frac{d_2}{\sqrt{|\lambda|}}\|u\|_\infty.
\]
\end{itemize}

We are now able to show Theorem \ref{t.main-2}.

\begin{proof} By (2)' above we have,  for every $\mathbf{i}\in J^d_2$ and $\lambda\in\C$ with ${\rm Re}\lambda>R$, that 
\begin{equation}\label{e.diseq-1}
\|R(\lambda, \cU_\mathbf{i})\|\leq \frac{d_1}{|\lambda|}.
\end{equation}
So,  for every $\lambda\C$ with ${\rm Re}\lambda>R$ we can consider the operator $S(\lambda) \colon C(Q^d)\to C(Q^d)$ defined by
\begin{equation}\label{e.ident-11}
S(\lambda)u=\sum_{\mathbf{i}\in J^d_2}\Phi^\mathbf{i}R(\lambda, \cU_\mathbf{i})(\Phi^\mathbf{i}u),\quad u\in C(Q^d).
\end{equation}
Hence, by \eqref{e.diseq-1}  we have, for every $\lambda\in \C$ with ${\rm Re}\lambda>R$, that 
\begin{equation}\label{e.diseq-2}
\|S(\lambda)\|\leq \frac{2^dd_1}{|\lambda|}.
\end{equation} 
We observe that the previous considerations on the differential operators $\cU_\mathbf{i}$ ensure, for every  $\mathbf{i}\in J^d_2$ and $u\in C(Q^d)$,  that
\begin{equation}\label{e.ident-21}
\cU(\Phi^\mathbf{i}R(\lambda, \cU_\mathbf{i})(\Phi^\mathbf{i}u))=\cU_\mathbf{i}(\Phi^\mathbf{i}R(\lambda, \cU_\mathbf{i})(\Phi^\mathbf{i}u))
\end{equation}  
and, for every $u,\, v\in D(\cU_\mathbf{i})$, that
\begin{equation}\label{e.ident-3}
\cU_\mathbf{i}(uv)=u\cU_\mathbf{i}(v)+v\cU_\mathbf{i}(u)+\Gamma(x)\sum_{h=1}^d\gamma_h(x_h)x_h(1-x_h)\partial_{x_h}u\partial_{x_h}v.
\end{equation}
By \eqref{e.ident-21} and \eqref{e.ident-3} we obtain, for every $\lambda\in\C$ with ${\rm Re}\lambda>R$ and $u\in C(Q^d)$, that
\begin{eqnarray}\label{e.diseq-3}
(\lambda-\cU)S(\lambda)(u)&=& \lambda S(\lambda) (u)-\sum_{\mathbf{i}\in J^d_2}\cU(\Phi^\mathbf{i}R(\lambda, \cU_\mathbf{i})(\Phi^\mathbf{i}u)) \nonumber\\
&=&\lambda S(\lambda) (u)-\sum_{\mathbf{i}\in J^d_2}\cU_\mathbf{i}(\Phi^\mathbf{i}R(\lambda, \cU_\mathbf{i})(\Phi^\mathbf{i}u))  \nonumber\\
&=& \sum_{\mathbf{i}\in J^d_2}\Phi^\mathbf{i}(\lambda-\cU_\mathbf{i})R(\lambda, \cU_\mathbf{i})(\Phi^\mathbf{i}u)-\sum_{\mathbf{i}\in J^d_2}\cU_\mathbf{i}(\Phi^\mathbf{i})R(\lambda, \cU_\mathbf{i})(\Phi^\mathbf{i}u)  \nonumber\\
& -&\Gamma(x)\sum_{\mathbf{i}\in J^d_2}\sum_{h=1}^d\gamma_h(x_h)x_h(1-x_h)\partial_{x_h}(R(\lambda, \cU_\mathbf{i})(\Phi^\mathbf{i}u)) \partial_{x_h}\Phi^\mathbf{i}  \nonumber\\
&=:&(I+B(\lambda)+C(\lambda))(u).
\end{eqnarray}
By \eqref{e.diseq-1} we deduce, for every $\lambda\in\C$ with ${\rm Re}\lambda>R$ and $u\in C(Q^d)$, that
\begin{equation}\label{e.diseq-4}
\|B(\lambda)u\|_\infty\leq\frac{d_3}{|\lambda|}\|u\|_\infty,
\end{equation}
where $d_3=2^d d_1 \max_{\mathbf{i}\in J^d_2}\|\cU_\mathbf{i}(\Phi^\mathbf{i})\|_\infty$. 

Applying again the property (2)' above we obtain, for every $\lambda\in\C$ with ${\rm Re}\lambda>R$ and $u\in C(Q^d)$, that
\begin{eqnarray}\label{e.diseq-5}
\|C(\lambda)u\|_\infty&\leq & d_4 \sup_{h=1,\ldots,d,\ \mathbf{i}\in J^d_2}\|x_h(1-x_h)\partial_{x_h}(R(\lambda, \cU_\mathbf{i})(\Phi^\mathbf{i}u)) \|_\infty\nonumber\\
&\leq & d_4\frac{d_2}{\sqrt{|\lambda|}}\sup_{h=1,\ldots,d,\ \mathbf{i}\in J^d_2}\|\Phi^\mathbf{i}u \|_\infty \nonumber\\
&\leq & d_4\frac{d_2}{\sqrt{|\lambda|}}\|u \|_\infty,
\end{eqnarray}
where $d_4=2^ddM$ with $M=\|\Gamma\|_\infty\max_{\mathbf{i}\in J^d_2,\, h=1,\ldots,d}\|\gamma_h\partial_{x_h}\Phi^{\mathbf{i}}\|_\infty$ (hence, $M$ depends only on $\Gamma$, $\gamma$ and on the functions $\{\Phi^\mathbf{i}\}_{\mathbf{i}\in J^d_2}$.

By \eqref{e.diseq-4} and \eqref{e.diseq-5}  we can choose  $R'\geq R$ such that $\|B(\lambda)+C(\lambda)\|<1/2$ for all $\lambda\in \C$ with ${\rm Re}\lambda>R'$ and hence, the operator $D(\lambda)=(\lambda -\cU)S(\lambda)$ is invertible in $\cL(C(Q^d))$ with $\|((D(\lambda))^{-1}\|\leq 2$. So, there exists $R(\lambda, \cU)=S(\lambda)(D(\lambda))^{-1}$ and  and satisfies by\eqref{e.diseq-2} 
\begin{equation}\label{e.diseq-9}
\|R(\lambda, \cU)\|\leq \frac{2^{d+1}d_1 }{|\lambda|}
\end{equation}
whenever $\lambda- \cU$ is injective, in particular, for $\lambda>0$ as the operator $\cU$ is dissipative (observe that $R(\lambda, \cU)$ is also compact as $S(\lambda)$ is compact). Since $(\cU, D(\cU))$ is also densely defined, by Lumer-Phillips theorem this fact ensures that the operator  $(\cU, D(\cU))$ generates a $C_0$--semigroup $(T(t))_{t\geq 0}$ of contractions in $C(Q^d)$. So, for every $\lambda\in\C$ with ${\rm Re}\lambda>R'$ we have that $R(\lambda, \cU)=S(\lambda)(D(\lambda))^{-1}$  and satisfies inequality \eqref{e.diseq-9}.

Finally,  from  \eqref{e.diseq-9} it follows that the operator $(\cU, D(\cU))$ is sectorial (see  \cite[Proposition 2.1.11]{L}), i.e., generates an analytic $C_0$--semigroup in $C(Q^d)$. 
Since the semigroup is analytic, hence norm--continuous, and the differential operator $(\cU, D(\cU))$ has compact resolvent, the semigroup is also compact.
\end{proof}

We end this section with the following result which could be useful for further developments.

\begin{prop}\label{p.dimd-gradvariablecompleto} Let $B\geq \max_{i=1}^d\|b_i\|_\infty$. Under Hypotheses \ref{hypon}, the operator $(\cU, D(\cU))$  satisfies the following properties.
\begin{itemize}
\item[\rm (1)] There exist $K,\alpha, \ov{t}>0$ depending on $B$ and on $\Gamma$, $\gamma$, such that, for every $u\in C(Q^d)$ and $i=1,\ldots, d$, we have
\[
\|t\cU T(t)\|\leq Ke^{\alpha t}, \quad t\geq 0,
\]
\[
\|\sqrt{x_i(1-x_i)} \partial_{x_i}(T(t) u)\|_\infty\leq \frac{Ke^{\alpha t}}{\sqrt{t}}\|u\|_\infty,\quad 0<t<\overline{t},
\]
 \[
 \|\sqrt{x_i(1-x_i)} \partial_{x_i}(T(t) u)\|_\infty\leq Ke^{\alpha t}\|u\|_\infty, \quad  t\geq \overline{t}.
 \]
Moreover, for every $i=1,\ldots, d$ and $u\in C(Q^d)$,  $\sqrt{x_i(1-x_i)} \partial_{x_i}(T(t) u)\in C(Q^d)$ and 
\[
\lim_{x_i\to 0^+, 1^-}\sup_{x_j\in [0,1],\, j\in \{1,\ldots,d\}\setminus\{i\}}\sqrt{x_i(1-x_i)} \partial_{x_i}(T(t) u)=0.
\]
\item[\rm (2)] There exist  $d_1,d_2, R>0$  depending on $B$ and on $\Gamma$, $\gamma$, such that, for every $\lambda\in\C$ with ${\rm Re}\lambda>R$,  $u\in C(Q^d)$ and $i=1,\ldots, d$, we have
\[
\|R(\lambda,\cU) u\|_\infty\leq \frac{d_1}{|\lambda|}\|u\|_\infty,
\]
\[
\|\sqrt{x_i(1-x_i)} \partial_{x_i}(R(\lambda,\cU) u)\|_\infty\leq \frac{d_2}{\sqrt{|\lambda|}}\|u\|_\infty.
\]
Moreover, for every $i=1,\ldots, d$ and $u\in C(Q^d)$,  $\sqrt{x_i(1-x_i)} \partial_{x_i}(R(\lambda,\cU))\in C(Q^d)$ and 
\[
\lim_{x_i\to 0^+,1^-}\sup_{x_j\in [0,1],\, j\in \{1,\ldots,d\}\setminus\{i\}}\sqrt{x_i(1-x_i)} \partial_{x_i}(R(\lambda,\cU) u)=0.
\]
\item[\rm (3)] There exist $\overline{\varepsilon}>0$, $C>0$ and $D>0$ depending on $B$ and on $\Gamma$, $\gamma$, such that,  for every $0<\varepsilon<\overline{\varepsilon}$,  $i=1,\ldots, d$  and    
$u\in D(\cU)$, we have 
\[ 
\|\sqrt{x_i(1-x_i) }\partial_{x_i}u\|_\infty\leq \frac{C} {\varepsilon}  \|u\|_\infty+ D\varepsilon \|\cU u\|_\infty.
 \]
\end{itemize}
\end{prop}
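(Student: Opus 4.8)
The plan is to follow almost verbatim the scheme already used for $(\cL,D(\cL))$ in Proposition~\ref{p.ddim-gradiente} and in the final part of the proof of Theorem~\ref{t.main1}, taking as building blocks the local operators $(\cU_{\mathbf i},D(\cU_{\mathbf i}))$, the property~(2)$'$ established for the $\cU_{\mathbf i}$ just before the proof of Theorem~\ref{t.main-2}, and the representation $R(\lambda,\cU)=S(\lambda)(D(\lambda))^{-1}$ with $\|(D(\lambda))^{-1}\|\le 2$ obtained there, where $S(\lambda)u=\sum_{\mathbf i\in J^d_2}\Phi^{\mathbf i}R(\lambda,\cU_{\mathbf i})(\Phi^{\mathbf i}u)$.

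I would first prove part~(2). Its first inequality is exactly \eqref{e.diseq-9}, so only the gradient bound is new. Applying $\sqrt{x_i(1-x_i)}\,\partial_{x_i}$ to $S(\lambda)(D(\lambda))^{-1}u$ and using Leibniz' rule, each summand splits into a term in which $\partial_{x_i}$ falls on $\Phi^{\mathbf i}$, controlled by $O(|\lambda|^{-1})\|u\|_\infty$ via the resolvent bound in~(2)$'$, plus a term $\Phi^{\mathbf i}\sqrt{x_i(1-x_i)}\,\partial_{x_i}\big(R(\lambda,\cU_{\mathbf i})(\Phi^{\mathbf i}u)\big)$, controlled by $O(|\lambda|^{-1/2})\|u\|_\infty$ via the gradient bound in~(2)$'$. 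Since $|J^d_2|=2^d$ and $\|(D(\lambda))^{-1}\|\le 2$, this yields $\|\sqrt{x_i(1-x_i)}\,\partial_{x_i}(R(\lambda,\cU)u)\|_\infty\le d_2|\lambda|^{-1/2}\|u\|_\infty$ with $d_2$ depending only on $B$, $\Gamma$, $\gamma$ (the cut--offs $\{\Phi^{\mathbf i}\}$ being fixed once and for all). That $\sqrt{x_i(1-x_i)}\,\partial_{x_i}(R(\lambda,\cU)u)$ belongs to $C(Q^d)$ and vanishes on $\{x_i=0\}\cup\{x_i=1\}$ follows from the corresponding properties of the $R(\lambda,\cU_{\mathbf i})$, which in turn descend, through the diffeomorphisms $\psi_{\mathbf i}$ of Lemma~\ref{l.part}, from \eqref{limres} applied on each face of $\mathbf I^{\mathbf i_0}$.

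Part~(3) is then deduced from part~(2) exactly as Corollary~\ref{c.1-gradiente} was deduced from Proposition~\ref{p.1-gradiente}: for $u\in D(\cU)$ and $\lambda\in\C$ with ${\rm Re}\,\lambda>R$ one writes $u=R(\lambda,\cU)(\lambda u-\cU u)$, whence by part~(2)
\[
\|\sqrt{x_i(1-x_i)}\,\partial_{x_i}u\|_\infty\le\frac{d_2}{\sqrt{|\lambda|}}\|\lambda u-\cU u\|_\infty\le d_2\Big(\sqrt{|\lambda|}\,\|u\|_\infty+\frac{1}{\sqrt{|\lambda|}}\|\cU u\|_\infty\Big),
\]
and one takes $\sqrt{|\lambda|}=1/\varepsilon$ for $0<\varepsilon<\overline\varepsilon:=R^{-1/2}$. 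For part~(1), part~(2) makes $(\cU,D(\cU))$ sectorial with resolvent constants depending only on $B,\Gamma,\gamma$; \cite[Proposition~2.1.11]{L} upgrades this to a sectoriality estimate in a sector $\{|\ar(\lambda-R)|<\pi-\arctan d_2\}$, and \cite[Proposition~2.1.1]{L} then produces $K,\alpha>0$ with $\|t(\cU-\alpha)T(t)\|\le Ke^{\alpha t}$, hence $\|t\cU T(t)\|\le (K+1)e^{\alpha t}$ by contractivity of $(T(t))_{t\ge 0}$. Since the semigroup is analytic, $T(t)f\in D(\cU)$ for $t>0$, so part~(3) applied to $u=T(t)f$ together with $\|\cU T(t)f\|_\infty\le (K+1)t^{-1}e^{\alpha t}\|f\|_\infty$ gives
\[
\|\sqrt{x_i(1-x_i)}\,\partial_{x_i}(T(t)f)\|_\infty\le\Big(\frac{C}{\varepsilon}+D\varepsilon(K+1)\frac{e^{\alpha t}}{t}\Big)\|f\|_\infty,\qquad 0<\varepsilon<\overline\varepsilon,
\]
and choosing $\varepsilon=\sqrt t$ for $0<t<\overline t:=(\overline\varepsilon)^2$, respectively $\varepsilon=\overline\varepsilon$ for $t\ge\overline t$, yields the two stated bounds; the continuity and boundary--vanishing of $\sqrt{x_i(1-x_i)}\,\partial_{x_i}(T(t)u)$ then pass from $D(\cU)$ to all of $C(Q^d)$ by density together with the uniform estimates just obtained, as at the end of the proof of Proposition~\ref{p.1-gradiente}(2).

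I do not expect a genuine obstacle, since every step duplicates an estimate already carried out in the excerpt; the two points that call for care are the bookkeeping of the dependence of all constants on $B$, $\Gamma$, $\gamma$ alone (and not on the particular drift $b$), and checking that the limits as $x_i\to 1^-$ are covered, which they are, since the similarity $\psi_{\mathbf i}$ turns the face $\{x_i=1\}\cap\mathbf I^{\mathbf i}$ into one of the form $\{y_i=0\}$, where \eqref{limres} applies.
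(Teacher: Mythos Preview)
Your proposal is correct and is precisely the expansion the paper has in mind: its own proof is the single line ``The result follows argumenting as in the end of the proof of Theorem~\ref{t.main1}'', and what you wrote is exactly that argument, using the representation $R(\lambda,\cU)=S(\lambda)(D(\lambda))^{-1}$ from the proof of Theorem~\ref{t.main-2}, the estimates~(2)$'$ for the local pieces $\cU_{\mathbf i}$, and then the passage to the semigroup via \cite[Propositions~2.1.1 and~2.1.11]{L} as in Proposition~\ref{p.1-gradiente}(2). Two cosmetic remarks: the maps $\psi_{\mathbf i}$ are introduced in the paragraph preceding the proof of Theorem~\ref{t.main-2}, not in Lemma~\ref{l.part}; and in part~(1) the density argument at the end is unnecessary, since analyticity already gives $T(t)u\in D(\cU)$ for every $u\in C(Q^d)$ and $t>0$, so the continuity and boundary vanishing of $\sqrt{x_i(1-x_i)}\,\partial_{x_i}(T(t)u)$ follow directly from part~(2).
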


\begin{proof}  The result follows argumenting as in the end of the proof of Theorem \ref{t.main1}.\end{proof}

\begin{example}\rm In the following examples assume that the coefficients $\Gamma$  and $\gamma_i$, for $i=1,\ldots,d$, are strictly positive continuous functions  on $Q^d$ and on $[0,1]$ respectively. 

(1)   Let $\{c_i\}_{i=1}^d\subset C([0,1])$ such that $c_i(0)=c_i(1)=0$ and there exist $0<\delta<1$ and $C>0$ such that
\[
|c_i(x)|\leq C		\sqrt{x},\quad \mbox{ if } 0\leq x\leq \delta, \mbox{ and }   |c_i(x)|\leq C\sqrt{1-x},\quad \mbox{ if } 1-\delta\leq x\leq 1,
\]
for every $i=1,\ldots, d$. Let $\{m_i\}_{i=1}^d\subset C(Q^{d-1})$. 

Now, for each $i=1,\ldots, d$ and $x\in Q^d$ denote $x^i=(x_1,\ldots, x_{i-1}, x_{i+1}, \ldots, x_d)$ and set $b_i(x)=c_i(x_i)m_i(x^i)$.  Then $\{b_i\}_{i=1}^d\subset C(Q^d)$. Moreover, we have, for every $x,\, x'\in Q^d$ with $|x_i|\leq \delta$ and $x'_i=0$ and $i=1,\ldots, d$,  that
\[
|b_i(x)-b_i(x')|=|c_i(x_i)m_i(x^i)|\leq C'\sqrt{x_i},
\]
with $C'=C\max_{i=1}^d\|m_i\|_\infty$.
On the other hand, we  have, for every $x,\, x'\in Q^d$ with $|1-x_i|\leq \delta$ and $x'_i=1$ and $i=1,\ldots, d$,  that
\[
|b_i(x)-b_i(x')|=|c_i(x_i)m_i(x^i)|\leq C'\sqrt{1-x_i}.
\]
Finally, it is easy to verify that $\langle b,\nu\rangle\geq 0$ on $\partial Q^d$. So, if we consider the second order differential operator
\[
U=\Gamma(x)\sum_{i=1}^d[\gamma_i(x_i)x_i(1-x_i)\partial^2_{x_i}+b_i(x)\partial_{x_i}],
\]
with $\Gamma$ and $\gamma_i$, for $i=1,\ldots,d$, strictly positive continuous functions on $Q^d$ and on $[0,1]$ respectively, then
the closure $(\cU, D(\cU))$ of operator $(U, C^2(Q^d))$ generates an analytic compact $C_0$--semigroup in $C(Q^d)$ of positive contractions.

(2) Let $\{c_i\}_{i=1}^d\subset C([0,1])$ such that $c_i\geq 0$ on $[0,1]$ and  there exist $0<\delta<1$ and $C>0$ such that
\[
|c_i(x)-c(0)|\leq C\sqrt{x},\quad \mbox{ if } 0\leq x\leq \delta, \mbox{ and }   |c_i(x)-c_i(1)|\leq C\sqrt{1-x},\quad \mbox{ if } 1-\delta\leq x\leq 1,
\]
for every $i=1,\ldots, d$. Next, for each $i=1,\ldots, d$ and $x\in Q^d$ set $\tilde{c}(x)=\sum_{i=1}^dc_i(x_i)$ and  $b_i(x)=c_i(x_i)-\tilde{c}(x)x_i(1-x_i)$. These type of coefficients was considered in \cite{S1,S2,CC}. Then we have, for every $x,\, x'\in Q^d$ with $|x_i|\leq \delta$ and $x'_i=0$ and $i=1,\ldots, d$,  that
\begin{eqnarray*}
|b_i(x)-b_i(x')|&=&|c_i(x_i)-\tilde{c}(x)x_i(1-x_i)-c_i(0)|\\
&\leq & |c_i(x_i)-c_i(0)|+|\tilde{c}(x)(1-x_i)||x_i|\leq C'\sqrt{x_i},
\end{eqnarray*}
with $C'=C+\|\tilde{c}\|_\infty$. 

On the other hand, we  have, for every $x,\, x'\in Q^d$ with $|1-x_i|\leq \delta$ and $x'_i=1$ and $i=1,\ldots, d$,  that
\begin{eqnarray*}
|b_i(x)-b_i(x')|&=&|c_i(x_i)-\tilde{c}(x)x_i(1-x_i)-c_i(1)|\\
&\leq &|c_i(x_i)-c_i(1)|+|\tilde{c}(x)x_i||1-x_i|\leq C'\sqrt{1-x_i}.
\end{eqnarray*}
Finally, it is easy to verify that $\langle b,\nu\rangle\geq 0$ on $\partial Q^d$. So, if we consider the second order differential operator
\[
U=\Gamma(x)\sum_{i=1}^d[\gamma_i(x_i)x_i(1-x_i)\partial^2_{x_i}+b_i(x)\partial_{x_i}],
\]
with $\Gamma$ and $\gamma_i$, for $i=1,\ldots,d$, strictly positive continuous functions on $Q^d$ and on $[0,1]$ respectively, then
the closure $(\cU, D(\cU))$ of operator $(U, C^2(Q^d))$ generates an analytic  compact $C_0$--semigroup in $C(Q^d)$ of positive contractions.
\end{example}

%%%%%%%%%%%%%%%%%%%%%%%%%%%%%%%%%%%%%%%%%%%%%%%%%%%%%%%%%%%%%%%%%%%%%%%%%%%%%%%%%%%%%%%%%%%%%%%%%%%%%%%%%%%%%%%%%%%%%%%%%%%%%%%%%%%%%%%%%%%%%%%%%%%%%%%%%%%%%%%%%%%%%%%%%%%%%%%%%%%%%%%%%%%%%%%%%%%%%%

 \bigskip
\bibliographystyle{plain}

\end{document}